  \theoremstyle{plain}
  \newtheorem{lem}{\protect\lemmaname}
  \theoremstyle{remark}
  \theoremstyle{definition}
  \newtheorem{defn}{\protect\definitionname}
  \theoremstyle{plain}
  \newtheorem{assumption}{\protect\assumptionname}
\theoremstyle{plain}
\newtheorem{thm}{\protect\theoremname}
  \theoremstyle{plain}
  \newtheorem{cor}{\protect\corollaryname}
  \theoremstyle{plain}
  \newtheorem{prop}{\protect\propositionname}
\def\Pr{\mathbb{P}}
  \providecommand{\assumptionname}{Assumption}
  \providecommand{\definitionname}{Definition}
  \providecommand{\lemmaname}{Lemma}
  \providecommand{\propositionname}{Proposition}
  \providecommand{\remarkname}{Remark}
\providecommand{\corollaryname}{Corollary}
\providecommand{\theoremname}{Theorem}
\newcommand{\cvd}{\mbox{$\stackrel{d}{\longrightarrow}\,$}}
\newcommand{\cvp}{\mbox{$\stackrel{p}{\longrightarrow}\,$}}
\def\ind{\mathbb{I}}
\begin{document}

\title{A Significance Test for Covariates
\\
in Nonparametric Regression}

\author{Pascal Lavergne\\
Toulouse School of Economics
\\
Samuel Maistre and
Valentin Patilea\\
Crest-Ensai \& Irmar (UEB)
}
\date{March 2014}
\maketitle

\begin{abstract}
We consider testing the significance of a subset of covariates in a
nonparametric regression.  These covariates can be continuous and/or
discrete.  We propose a new kernel-based test that smoothes only over
the covariates appearing under the null hypothesis, so that the curse
of dimensionality is mitigated.  The test statistic is asymptotically
pivotal and the rate of which the test detects local alternatives
depends only on the dimension of the covariates under the null
hypothesis.  We show the validity of wild bootstrap for the test. In
small samples, our test is competitive compared to existing
procedures.
\end{abstract}

\newpage
\section{Introduction}

 Testing the significance of covariates is common in applied
 regression analysis. Sound parametric inference hinges on the correct
 functional specification of the regression function, but the
 likelihood of misspecification in a parametric framework cannot be
 ignored, especially as applied researchers tend to choose functional
 forms on the basis of parsimony and tractability.  Significance
 testing in a nonparametric framework has therefore obvious appeal as
 it requires much less restrictive assumptions.  \cite{Fan1996a},
 \cite{FanLi1996} , \cite{Racine1997}, \cite{ChenFan99},
 \cite{Lavergne2000}, \cite{Abs01}, and \cite{Delgado2001} proposed
 tests of significance for continuous variables in nonparametric
 regression models. \cite{Delgado1993}, \cite{DetteN01},
 \cite{Lavergne2001}, \cite{NeuD03}, \cite{Racine2006} focused on
 significance of discrete variables.  \cite{SigQuant13} considered
 significance testing in nonparametric quantile regression.  For each
 test, one needs first to estimate the model without the covariates
 under test, that is under the null hypothesis. The result is then
 used to check the significance of extra covariates. Two competing
 approaches are then possible.  In the ``smoothing approach,'' one
 regresses the residuals onto the whole set of covariates
 nonparametrically, while in the ``empirical process approach'' one
 uses the empirical process of residuals marked by a function of all
 covariates.

In this work, we adopt an hybrid approach to develop a new
significance test of a subset of covariates in a nonparametric
regression.  Our new test has three specific features. First, it does
not require smoothing with respect to the covariates under test as in
the ``empirical process approach.''  This allows to mitigate the curse
of dimensionality that appears with nonparametric smoothing, hence
improving the power properties of the test. Our simulation results
show that indeed our test is more powerful than competitors under a
wide spectrum of alternatives.  Second, the test statistic is
asymptotically pivotal as in the ``smoothing approach,'' while wild
bootstrap can be used to obtain small samples critical values of the
test. This yields a test whose level is well controlled by
bootstrapping, as shown in simulations.  Third, our test equally
applies whether the covariates under test are continuous or discrete,
showing that there is no need of a specific tailored procedure for
each situation.

The paper is organized as follows. In Section 2, we present our
testing procedure. In Section 3, we study its asymptotic properties
under a sequence of local alternatives and we establish the validity
of wild bootstrap.  In Section 4, we compare the small sample behavior
of our test to some existing procedures.  Section 5 gathers our proofs.

\section{Testing Framework and Procedure}

\subsection{Testing Principle}

We want to assess the significance of $X\in\mathbb{R}^{q}$ in the
nonparametric regression of $Y\in\mathbb{R}$ on $W\in\mathbb{R}^{p}$
and $X$.  Formally, this corresponds to the null hypothesis
\[
H_{0}\,:\,\mathbb{E}\left[Y\mid W,X\right]=\mathbb{E}\left[Y\mid
  W\right]\quad\mbox{a.s.}
\]
which is  equivalent to
\begin{equation}
H_{0}\,:\, \mathbb{E}\left[u\mid W,X\right]=0\quad\mbox{a.s.}
\label{NullHyp}
\end{equation}
where $u=Y-\mathbb{E}\left[Y\mid W\right]$.
The corresponding alternative hypothesis is
\[
H_{1}\,:\,\Pr\left\{ \mathbb{E}\left[u\mid W,X\right]=0\right\} <1.
\]
The following result is the cornerstone of our approach. It
characterizes the null hypothesis $H_{0}$ using a suitable
unconditional moment equation.
\begin{lem}
\label{Fundamental-Lemma}
Let $\left(W_{1},\, X_{1},\, u_{1}\right)$
and $\left(W_{2},\, X_{2},\, u_{2}\right)$ be two independent draws of
$\left(W,\, X,\, u\right)$, $\nu(W)$ a strictly positive function on
the support of $W$ such that  $\mathbb{E}[u^2 \nu^2(W)]<\infty$, and
$K(\cdot)$ and $\psi(\cdot)$  even functions with (almost
everywhere) positive Fourier integrable transforms. Define
\[
I\left(h\right)=\mathbb{E}\left[u_{1}u_{2}\nu\left(W_{1}\right)
 \nu\left(W_{2}\right)h^{-p}K\left(\left(W_{1}-W_{2}\right)/h\right)
\psi\left(X_{1}-X_{2}\right)\right]
\, .
\]
Then for any $h>0$,
\[
\mathbb{E}\left[u\mid W,X\right]= 0\,\; a.s.
\Leftrightarrow
I (h) = 0.
\]
\end{lem}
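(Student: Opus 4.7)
The forward implication is immediate: under $H_{0}$, conditioning on $(W_{1},X_{1},W_{2},X_{2})$ and using independence of the two draws gives $\mathbb{E}[u_{1}u_{2}\mid W_{1},X_{1},W_{2},X_{2}]=\mathbb{E}[u_{1}\mid W_{1},X_{1}]\,\mathbb{E}[u_{2}\mid W_{2},X_{2}]=0$, hence $I(h)=0$ for every $h>0$.

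For the converse, set $r(w,x)=\mathbb{E}[u\mid W=w,X=x]$ and $g(w,x)=r(w,x)\nu(w)$. By Cauchy--Schwarz and the moment assumption,
\[
\mathbb{E}|g(W,X)|\le \bigl(\mathbb{E}[u^{2}\nu^{2}(W)]\bigr)^{1/2}<\infty,
\]
so the signed measure $d\mu=g\,d\mathbb{P}_{W,X}$ is finite. Conditioning $u_{1}u_{2}$ on $(W_{1},X_{1},W_{2},X_{2})$ rewrites
\[
I(h)=\mathbb{E}\bigl[g(W_{1},X_{1})g(W_{2},X_{2})\,h^{-p}K((W_{1}-W_{2})/h)\,\psi(X_{1}-X_{2})\bigr].
\]

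Next, I invoke Fourier inversion. Let $\hat K$ and $\hat\psi$ denote the Fourier transforms of $K$ and $\psi$, which by assumption are integrable and a.e.\ positive. A change of variables gives
\[
h^{-p}K(v/h)=(2\pi)^{-p}\!\int \hat K(sh)\,e^{is^{\top}v}\,ds,\qquad \psi(z)=(2\pi)^{-q}\!\int\hat\psi(t)\,e^{it^{\top}z}\,dt.
\]
Substituting, applying Fubini (licensed by integrability of $\hat K,\hat\psi$ and of $g$), and factorizing through the independence of the two draws yields
\[
I(h)=(2\pi)^{-(p+q)}\!\int\!\!\int \hat K(sh)\,\hat\psi(t)\,|\phi(s,t)|^{2}\,ds\,dt,
\]
where $\phi(s,t)=\int e^{i(s^{\top}w+t^{\top}x)}\,d\mu(w,x)$ is the characteristic function of the finite signed measure $\mu$.

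Since the integrand is nonnegative and the weight $\hat K(sh)\hat\psi(t)$ is a.e.\ positive, $I(h)=0$ forces $\phi(s,t)=0$ for Lebesgue-a.e.\ $(s,t)$; continuity of $\phi$, automatic for a finite signed measure, then gives $\phi\equiv0$. Uniqueness of the Fourier transform on finite signed measures yields $\mu=0$, i.e.\ $g=0$ $\mathbb{P}_{W,X}$-a.s.; since $\nu>0$, this forces $r(W,X)=0$ a.s., which is $H_{0}$. The main technical step is justifying the Fubini interchange once $K$ and $\psi$ are opened up into their Fourier representations; once that is done, the positivity of $\hat K$ and $\hat\psi$ together with the characteristic-function uniqueness argument closes the proof mechanically.
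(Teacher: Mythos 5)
Your proof is correct and follows essentially the same route as the paper: Fourier-invert $K$ and $\psi$, use Fubini and independence to rewrite $I(h)$ as an integral of $|\phi(s,t)|^{2}$ weighted by the a.e.\ positive transforms, and conclude that $\mathbb{E}[u\mid W,X]\,\nu(W)=0$ a.s. The only difference is that you spell out the final step (uniqueness of the Fourier transform of a finite signed measure, plus the Cauchy--Schwarz integrability check) that the paper leaves implicit.
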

\begin{proof}
Let $\langle \cdot, \cdot \rangle$ denote the standard inner product.
Using Fourier Inversion Theorem, change of variables, and elementary
properties of conditional expectation,
\begin{eqnarray*}
\lefteqn{I\left(h\right)}
\\
 & = &
\mathbb{E}\left[u_{1}u_{2}\nu\left(W_{1}\right)\nu\left(W_{2}\right)\int_{\mathbb{R}^{p}}e^{2\pi
    i\langle t, \; W_{1}-W_{2}\rangle
  }\mathcal{F}\left[K\right]\left(th\right)dt
\right.
\\
& & \times
\left.
  \int_{\mathbb{R}^{q}}e^{2\pi i \langle s
    ,\;X_{1}-X_{2}\rangle}\mathcal{F}\left[\psi\right]\left(s\right)ds\right]\\ &
= &
\int_{\mathbb{R}^{q}}\int_{\mathbb{R}^{p}}\left|\mathbb{E}\left[\mathbb{E}\left[u\mid
    W,X\right]\nu\left(W\right)e^{2\pi i\left\{ \langle t, W
    \rangle + \langle s, X\rangle \right\}
  }\right]\right|^{2}\mathcal{F}\left[K\right]\left(th\right)\mathcal{F}\left[\psi\right]\left(s\right)dtds
\, .
\end{eqnarray*}
Since the Fourier transforms $\mathcal{F}\left[K\right]$ and
$\mathcal{F}\left[\psi\right]$  are  strictly positive, $I(h)=0$
iff
\[
\mathbb{E}\left[\mathbb{E}\left[u\mid
    W,X\right]\nu\left(W\right)e^{2\pi i\left\{ \langle t, W
    \rangle + \langle s, X\rangle \right\}
  }\right] = 0
\qquad \forall t, s
\, .
\]
But this is equivalent to $\mathbb{E}\left[u\mid
  W,X\right]\nu\left(W\right) = 0$ a.s., which by our assumption on
$\nu(\cdot)$ is equivalent to $H_{0}$.
\end{proof}

\subsection{The Test}

Lemma \ref{Fundamental-Lemma} holds whether the covariates $W$ and $X$
are continuous or discrete.  For now, we assume $W$ is continuously
distributed, and we later comment on how to modify our procedure in
the case where some of its components are discrete. We however do not
restrict $X$ to be continuous.  Since it is sufficient to test whether
$I (h)=0$ for any arbitrary $h$, we can choose $h$ to obtain
desirable properties. So we consider a sequence of $h$ decreasing to
zero when the sample size increases, which is one of the ingredient
that allows to obtain a tractable asymptotic distribution for the test
statistic.

Assume we have at hand a random sample $(Y_i,W_i,X_i)$, $1\leq i\leq
n$, from $(Y,W,X)$. In what follows, $f(\cdot)$ denotes the density of
$W$, $r(\cdot)=\mathbb{E}\left[Y\mid W=\cdot\right]$, $u=Y-r(W)$,
and $f_{i}$, $r_{i}$, $u_{i}$ respectively denote
$f\left(W_{i}\right)$, $r\left(W_{i}\right)$, and
$Y_{i}-r\left(W_{i}\right)$.  Since nonparametric estimation should be
entertained to approximate $u_{i}$, we consider usual kernel
estimators based on kernel $L(\cdot)$ and bandwidth $g$. With $L_{nik}
= \frac{1}{g^{p}}L\left(\frac{W_{i}-W_{k}}{g} \right)$, let
\begin{eqnarray*}
\hat{f}_{i} & = & (n-1)^{-1}\sum_{k\neq i, k=1}^n L_{nik}
\, ,
\\
\hat{r}_{i} & = &
\frac{1}{\hat{f}_{i}} \frac{1}{(n-1)} \sum_{k\neq i, k=1}^n Y_k L_{nik}
\,
\\
\mbox{so that } \quad \hat{u}_{i} & = & Y_i-\hat{r}_{i}=
\frac{1}{\hat{f}_{i}} \frac{1}{(n-1)} \sum_{k\neq i, k=1}^n (Y_i -Y_k) L_{nik}
\, .
\end{eqnarray*}
Denote by $n^{\left(m\right)}$ the number of arrangements of $m$
distinct elements among $n$, and by $[1/n^{\left(m\right)}]\sum_{a}$,
the average over these arrangements.  In order to avoid random
denominators, we choose $\nu\left(W\right)=f\left(W\right)$, which
fulfills the assumption of Lemma \ref{Fundamental-Lemma}.  Then we can
estimate $I\left(h\right)$ by the  second-order U-statistic
\begin{eqnarray*}
\widehat{I}_{n} & = &
\frac{1}{n^{\left(2\right)}}\sum_{a}\hat{u}_{i}\hat{f}_{i}\hat{u}_{j}\hat{f}_{j}K_{nij}\psi_{ij}\\ &
= &
\frac{1}{n^{\left(2\right)}\left(n-1\right)^{2}}\sum_{a}\sum_{k\neq
  i}\sum_{l\neq
  j}\left(Y_{i}-Y_{k}\right)\left(Y_{j}-Y_{l}\right)L_{nik}L_{njl}K_{nij}\psi_{ij}
\, ,
\end{eqnarray*}
with $K_{nij}=\frac{1}{h^{p}} K\left(\frac{W_{i}-W_{j}}{h}\right)$
and $ \psi_{ij}=\psi\left(X_{i}-X_{j}\right)$.   We also consider
the alternative statistic
\[
\tilde{I}_{n}=\frac{1}{n^{\left(4\right)}}\sum_{a}
\left(Y_{i}-Y_{k}\right)\left(Y_{j}-Y_{l}\right)L_{nik}L_{njl}K_{nij}\psi_{ij}
\, .
\]
It is clear that $\tilde{I}_{n}$ is obtained from $\widehat{I}_{n}$ by
removing asymptotically negligible ``diagonal'' terms.  Under the null
hypothesis, both statistics will have the same asymptotic normal
distribution, but removing diagonal terms reduces the bias of the
statistic under $H_{0}$.
Our statistics $\tilde{I}_{n}$ and $\widehat{I}_{n}$ are respectively
similar to the ones of \cite{FanLi1996} and \cite{Lavergne2000}, with
the fundamental difference that there is no smoothing relative to the
covariates $X$. Indeed these authors used a multidimensional smoothing
kernel over $(W,X)$, that is $h^{-\left(p+q\right)}
\tilde{K}\left(\left(W_{i}-W_{j}\right)/h, \,
\left(X_{i}-X_{j}\right)/h\right)$, while we use $K_{nij}\psi_{ij}$.
For $I_{n}$ being either $\tilde{I}_{n}$ or $\widehat{I}_{n}$, we will
show that $nh^{p/2}I_{n} \cvd \mathcal{N}\left(0,\omega^{2}\right)$
under $H_{0}$ and $nh^{p/2}I_{n}\cvp\infty$ under $H_{1}$. By
contrast, the statistics of \cite{FanLi1996} and \cite{Lavergne2000}
exhibit a $nh^{(p+q)/2}$ rate of convergence.  The alternative test of
\cite{Delgado2001} uses the kernel residuals $\hat{u}_{i}$ and the
empirical process approach of \cite{Stute1997}. This avoids extra
smoothing, but a the cost of a test statistic with a non pivotal
asymptotic law under $H_{0}$.  Hence, our proposal is an hybrid
approach that combines the advantages of existing procedures, namely
smoothing only for the variables $W$ appearing under the null
hypothesis but with an asymptotic normal distribution for the
statistic.
Given a consistent estimator $\omega^{2}_{n}$ of $\omega^{2}$, as
provided in the next section, we obtain an asymptotic
$\alpha$-level test of $H_{0}$ as
\[
\mbox{Reject } H_{0} \mbox{ if } \
nh^{p/2}I_{n} / \omega_{n} > z_{1-\alpha}
\, ,
\]
where $z_{1-\alpha}$ is the $(1-\alpha)$-th quantile of the standard
normal distribution. In small samples, we will show the validity of a
wild bootstrap scheme to obtain critical values.

The test applies whether $X$ is continuous or has some discrete
components.  The procedure is also easily adapted to some discrete
components of $W$. In that case, one would replace kernel smoothing by
cells' indicators for the discrete components, so that for $W$
composed of continuous $W_{c}$ of dimension $p_c$ and discrete
$W_{d}$, one would use $h^{-p_c} K\left(\frac{W_{ic}-W_{jc}}{h}\right)
\ind(W_{id}=W_{jd})$ instead of $ K_{nij}$. It would also be possible
to smooth on the discrete components, as proposed by
\cite{RacineLi2004}.  To obtain scale invariance, we recommend that
observations on covariates should be scaled, say by their sample
standard deviation as is customary in nonparametric estimation. It is
equally  important to scale the $X_{i}$ before they are used as
arguments of $\psi(\cdot)$ to preserve such invariance.

The outcome of the test may depend on the choice of the kernels
$K(\cdot)$ and $L(\cdot)$, while this influence is expected to be
limited as it is in nonparametric estimation.  The choice of the
function $\psi(\cdot)$ might be more important, but our simulations
reveal that it is not.  From our theoretical study, this function, as
well as $K(\cdot)$ should possess an almost everywhere positive and
integrable Fourier transform. This is true for (products of) the
triangular, normal, Laplace, and logistic densities, see \cite{JKB}, and for a Student
density, see \cite{Hurst}. Alternatively, one can choose $\psi (x)$ as
a univariate density applied to some transformation of $x$, such as
its norm. This yields $\psi (x) = g \left( \| x\|\right)$ where
$g(\cdot)$ is any of the above univariate densities.  This is the form
we will consider in our simulations to study the influence of
$\psi(\cdot)$.

\section{Theoretical Properties}

We here give the asymptotic properties of our test statistics under
$H_{0}$ and some local alternatives. To do so in a compact way, we
consider the sequence of hypotheses
\[
H_{1n}\,:\,\mathbb{E}\left[Y\mid
  W,X\right]=r\left(W\right)+\delta_{n}d\left(W,X\right),\qquad n\geq
1,
\]
where $d(\cdot)$ is a fixed integrable function.  Since
$r\left(W\right) = \mathbb{E}\left[Y\mid W\right]$, our setup imposes
$\mathbb{E}\left[d\left(W,X\right)\mid W\right]=0$.  The null
hypothesis corresponds to the case $\delta_n\equiv 0$, while
considering a sequence $\delta_{n}\to 0$ yields local Pitman-like
alternatives.

\subsection{Assumptions}

We begin by some useful definitions.
\begin{defn}
\label{RegulDef}
\begin{description}
\item [{(i)}] $\mathcal{U}^{p}$ is the class of integrable uniformly continuous
functions from $\mathbb{R}^{p}$ to $\mathbb{R}$;
\item [{(ii)}] $\mathcal{D}_{s}^{p}$ is the class of $m$-times differentiable
functions from $\mathbb{R}^{p}$ to $\mathbb{R}$ , with derivatives
of order $\left\lfloor s\right\rfloor $ that are uniformly Lipschitz
continuous of order $s-\left\lfloor s\right\rfloor $,
where $\left\lfloor s\right\rfloor $ denotes the integer such that
$\left\lfloor s\right\rfloor \leq s < \left\lfloor s\right\rfloor +1$.
\end{description}
\end{defn}
\medskip
Note that a function belonging to $\mathcal{U}^{p}$ is necessarily bounded.
\begin{defn}
\label{KerDef}$\mathcal{K}_{m}^{p}$, $m\geq2$, is the class of
even integrable functions $K\,:\,\mathbb{R}^{p}\to\mathbb{R}$  with
compact support satisfying $\int K\left(t\right)dt=1$ and, if $t=(t_1,\dots,t_p)$,
\[
\int_{\mathbb{R}^{p}} t_{1}^{\alpha_{1}}\dots t_{p}^{\alpha_{p}}
K\left(t\right)dt=0\;\; \mbox{ for }\;\;
0<\sum_{i=1}^{p}\alpha_{i}\leq m-1,\,\alpha_{i}\in\mathbb{N}\quad\forall i
\]
\end{defn}
This definition of higher-order kernels is standard in nonparametric
estimation.  The compact support assumption is made for simplicity and
could be relaxed at the expense of technical conditions on the rate of
decrease of the kernels at infinity, see e.g. Definition 1 in
\cite{FanLi1996}. In particular, the gaussian kernel could be allowed
for.
We are now ready to list our assumptions.
\begin{assumption}
\label{Sample} (i) For any $x\in\mathbb{R}^q$ in the support of $X$, the vector $W$
 admits a conditional density given $X=x$ with respect to the Lebesgue
 measure in $\mathbb{R}^p$, denoted by $\pi(\cdot\mid x)$. Moreover,
 $\mathbb{E}\left[Y^{8}\right]<\infty$.  (ii) The observations
 $\left(W_{i},X_{i},Y_{i}\right)$, $i=1,\cdots, n $ are independent
 and identically distributed as $(W,X,Y)$.
\end{assumption}
The existence of the conditional density given $X=x$ for all
$x\in\mathbb{R}^q$ in the support of $X$ implies that $W$ admits a
density with respect to the Lebesgue measure on $\mathbb{R}^p$.  As
noted above, our results easily generalizes to some discrete
components of $W$, but for the sake of simplicity we do not formally
consider this in our theoretical analysis.
\begin{assumption}
\label{RegulHyp}
\begin{description}
\item[{(i)}] $f\left(\cdot\right)$ and
  $r\left(\cdot\right)f\left(\cdot\right)$ belong to
  $\mathcal{U}^{p}\cap\mathcal{D}_{s}^{p}$, $s\geq2$;
\item [{(ii)}] $\mathbb{E}\left[u^{2}\mid W=\cdot\right]f\left(\cdot\right)$,
$\mathbb{E}\left[u^{4}\mid W = \cdot\right]f^{4}\left(\cdot\right)$
  belong to $\mathcal{U}^{p};$
\item[{(iii)}] the function $\psi\left(\cdot\right)$ is bounded and
  has a almost everywhere positive  and integrable Fourier transform;
\item[{(iv)}]  $K\left(\cdot\right)\in\mathcal{K}_{2}^{p}$ and has an almost
  everywhere positive and integrable Fourier transform, while
  $L\left(\cdot\right)\in\mathcal{K}_{\left\lfloor s\right\rfloor}^{p}$
and  is  of bounded variation;
\item[{(v)}] let $\sigma^2(w,x)=\mathbb{E}[u^2 \mid W=w,X=x]$, then
  $\sigma^{2}\left(\cdot,x\right)
  f^{2}\left(\cdot\right)\pi\left(\cdot\mid x\right)$ belongs to
  $\mathcal{U}^{p}$ for any $x$ in the support of $X$, has integrable
  Fourier transform, and

  $\mathbb{E}\left[\sigma^{4}\left(W,X\right)f^{4}\left(W\right)\pi\left(W\mid
    X\right)\right]<\infty$;
\item[{(vi)}]  $\mathbb{E}[d^2(W,X)\mid W=\cdot]
  f^2(\cdot)$ belongs to $\mathcal{U}^{p}$,
 $d(\cdot , x)
  f\left(\cdot\right)\pi\left(\cdot\mid x\right)$ is integrable and
  squared integrable for any $x$ in the support of $X$, and

  $\mathbb{E}\left[d^{2}\left(W,X\right)f^2\left(W\right)\pi\left(W\mid
    X\right)\right]<\infty$.
\end{description}
\end{assumption}
Standard regularity conditions are assumed for various functions.  A
higher-order kernel $L(\cdot)$ is used in conjunction with the
differentiability conditions in (i) to ensure that the bias in
nonparametric estimation is small enough.

\subsection{Asymptotic Analysis}

The following result characterizes the behavior of our statistics
under the null hypothesis and a sequence of local alternatives.
\begin{thm}
\label{Consistency} Let $I_n$ be any of the statistics $\widehat{I}_n$ or $\tilde{I}_n$.
Under Assumptions \ref{Sample} and \ref{RegulHyp}, and if as
$n\to\infty$ (i) $g,h\to 0$, { (ii) $n^{7/8} g^{p}/\ln n ,$} $nh^{p}\to\infty$,
(iii) $nh^{p/2}g^{2s}\to 0$, and (iv) $h/g\to 0$ if $I_n =
\tilde{I}_n$ or $h/g^2\to 0$ if $I_n = \widehat{I}_n$, then
\begin{description}
\item [{(i)}] If $\delta_{n}^{2}nh^{p/2}\to C$ with $0\leq C<\infty$,
  $nh^{p/2}I_{n} \cvd \mathcal{N}\left(C\mu,\omega^{2}\right)$
  where
\[
  \mu=\mathbb{E}\left[\intop\!\!
    d\left(w,X_{1}\right)d\left(w,X_{2}\right)f^{2}\left(w\right)\pi\left(w\mid
    X_{1}\right)\pi\left(w\mid X_{2}\right)\psi\left(X_{1}\!-\!
    X_{2}\right)dw\right]>0
\]
\begin{eqnarray*}
\lefteqn{\mbox{and } \quad \omega^{2} =
2 \int\! K^{2}\!\left(s\right) \, ds}
\\
& &
\mathbb{E}\left[\int\!\!\sigma^{2}\left(w,X_{1}\right)
  \sigma^{2}\left(w,X_{2}\right)f^{4}\left(w\right)\pi\left(w\mid
  X_{1}\right)\pi\left(w\mid X_{2}\right)\psi^{2}\left(X_{1}\!-\!
  X_{2}\right)dw\right]
\, .
\end{eqnarray*}
\item [{(ii)}] If $\delta_{n}^{2}nh^{p/2}\to\infty$, $nh^{p/2}I_{n}\cvp\infty$.
\end{description}
\end{thm}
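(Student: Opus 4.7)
My plan is to prove the two assertions by reducing $I_n$ to a leading degenerate second-order $U$-statistic plus a bias term, and then invoking a CLT for degenerate $U$-statistics.

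First I would write $u_i = Y_i - r(W_i) = \delta_n d(W_i,X_i) + \varepsilon_i$ under $H_{1n}$, where $\mathbb{E}[\varepsilon_i\mid W_i,X_i]=0$, and decompose
\[
\hat u_i\hat f_i = u_i f_i + B_i + R_i ,
\]
where $B_i = \frac{1}{n-1}\sum_{k\neq i}(r_i - r_k)L_{nik} - u_i(\hat f_i-f_i)$ collects the smoothing bias of the Nadaraya–Watson numerator with weight $f$, and $R_i = -\frac{1}{n-1}\sum_{k\neq i}\varepsilon_k L_{nik}$ is the stochastic part coming from the residuals at other observations. Substituting this into $\tilde I_n$ (and similarly $\widehat I_n$) yields a polynomial expansion whose leading term is
\[
T_n=\frac{1}{n^{(2)}}\sum_{i\neq j} u_i u_j f_i f_j K_{nij}\psi_{ij}.
\]
All remaining terms involve at least one factor $B$ or one factor $R$ attached to an observation index, and therefore form $U$-statistics of order $3$ or $4$ whose kernels decay with $g$ and $h$.

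Second, I would split $T_n = T_n^{\varepsilon\varepsilon} + \delta_n(T_n^{d\varepsilon}+T_n^{\varepsilon d}) + \delta_n^2 T_n^{dd}$ according to the decomposition of $u_i$. Under $H_{1n}$, the kernel $\varepsilon_1\varepsilon_2 f_1 f_2 K_{n12}\psi_{12}$ has zero conditional mean, so $T_n^{\varepsilon\varepsilon}$ is a completely degenerate second-order $U$-statistic. I would compute its variance using Assumption \ref{RegulHyp}(v) and change of variables $s=(W_1-W_2)/h$, obtaining
\[
n^2h^p\,\mathrm{Var}(T_n^{\varepsilon\varepsilon})\;\longrightarrow\;\omega^2,
\]
and then apply Hall's (1984) CLT for clean second-order $U$-statistics (verifying the standard fourth-moment ratio condition from the moment assumptions on $\sigma^4 f^4\pi$ and boundedness of $\psi$) to get $nh^{p/2}T_n^{\varepsilon\varepsilon}\cvd \mathcal{N}(0,\omega^2)$. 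The signal term $\delta_n^2 T_n^{dd}$ has expectation $\delta_n^2\mu$ by Lemma \ref{Fundamental-Lemma}-style Fourier arguments (with $\psi$ playing the role of the positive-Fourier even function and the limit as $h\to 0$ producing the $f^2\pi\pi$ weighting), so $nh^{p/2}\delta_n^2 T_n^{dd} = \delta_n^2 nh^{p/2}\,\mu + o_p(1) \to C\mu$. The cross terms $\delta_n T_n^{d\varepsilon}$, being centered and of smaller variance, are $o_p((nh^{p/2})^{-1})$ and hence negligible after rescaling.

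Third, I would show that every remainder piece (those carrying $B_i$ or $R_i$) is $o_p((nh^{p/2})^{-1})$. For the bias factors $B_i$, uniform control $\sup_i |B_i|=O_p(g^s+\sqrt{\ln n/(ng^p)})$ combined with Assumption \ref{RegulHyp}(i)-(ii) and condition (iii) $nh^{p/2}g^{2s}\to 0$ plus (ii) $n^{7/8}g^p/\ln n\to\infty$ handles the bias contribution. For the stochastic remainders, the terms of the form
\[
\frac{1}{n^{(2)}(n-1)^2}\sum \varepsilon_k\varepsilon_l L_{nik}L_{njl}K_{nij}\psi_{ij}
\]
are higher-order $U$-statistics whose expected squared magnitude is computed by grouping indices; the dominant group yields a variance of order $1/(n^2 h^p)$ times a factor $(h/g)^p$ (for $\tilde I_n$) or $(h/g^2)^p$ (for $\widehat I_n$, because the diagonal terms $k=j$ or $l=i$ contribute an extra $1/g^p$). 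This is precisely where condition (iv) enters and explains the difference between the two statistics.

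The main obstacle is the combinatorial bookkeeping in step three: the expansion produces many multi-index sums, and for each one must identify which configurations of coinciding indices dominate and verify that the resulting rate, combined with conditions (i)-(iv), yields $o_p(1)$ after scaling by $nh^{p/2}$. Once that is done, part (ii) follows: under a fixed alternative $\delta_n\equiv 1$ so $\delta_n^2 nh^{p/2}\to\infty$, the signal term $nh^{p/2}\delta_n^2 T_n^{dd}$ diverges to $+\infty$ (since $\mu>0$ by Lemma \ref{Fundamental-Lemma} applied to $d$), while all other terms are $O_p(1)$ or smaller, giving $nh^{p/2}I_n\cvp\infty$.
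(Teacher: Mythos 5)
Your proposal follows essentially the same route as the paper: isolate the leading degenerate quadratic form $\frac{1}{n^{(2)}}\sum_{i\neq j}u_iu_jf_if_jK_{nij}\psi_{ij}$, establish its asymptotic normality (the paper does this by constructing an explicit martingale array and invoking the Hall--Heyde CLT, of which the Hall 1984 degenerate $U$-statistic CLT you cite is the packaged form with the same variance and fourth-moment conditions), obtain the drift $C\mu$ by the same Fourier/Parseval limit argument, and dispose of the bias and estimation-error remainders via $U$-statistic second-moment bounds in which conditions (iii) and (iv) enter exactly where you place them, including the extra $g^{-p}$ from diagonal terms that forces $h/g^{2}\to 0$ for $\widehat{I}_n$. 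The sketch is correct; only minor bookkeeping (a sign on the $u_i(\hat f_i-f_i)$ term in your $B_i$, and the $\delta_n d$ component hidden in your $R_i$) would need tidying in a full write-up.
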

The rate of convergence of the test statistic depends only on the
dimension of $W$, the covariates present under the null hypothesis,
but not on the dimension of $X$, the covariates under test. Similarly,
the rate of local alternatives that are detected by the test depends
only on the dimension of $W$. As shown in the simulations, this yields
some gain in power compared to competing ``smoothing''
tests. Conditions (i) to (iv) together require that $s>p/2$ for
$I_{n}=\tilde{I}_{n}$ and $s>p/4$ for $I_{n}=\widehat{I}_{n}$, so
removing diagonal terms in $\widehat{I}_{n}$ allows to weaken the
restrictions on the bandwidths.  { Condition (ii) could be slightly
  weakened to $ng^p \rightarrow \infty$ at the price of handling high
  order $U$-statistics in the proofs, but allows for a shorter
  argument based on empirical processes, see Lemma
  \ref{unif_omeg} in the proofs section.}

\bigskip
To estimate $\omega^{2}$, we can either mimic \cite{Lavergne2000} to
consider
\[
\tilde{\omega}_{n}^{2}=\dfrac{2h^{p}}{n^{\left(6\right)}}\sum_{a}
\left(Y_{i}-Y_{k}\right)\left(Y_{i}-Y_{k^{\prime}}\right)\left(Y_{j}-Y_{l}\right)
\left(Y_{j}-Y_{l^{\prime}}\right)L_{nik}L_{nik^{\prime}}L_{njl}L_{njl^{\prime}}K_{nij}^{2}\psi_{ij}^{2},
\]
or generalize the variance estimator of \cite{FanLi1996} as
\[
\widehat{\omega}_{n}^{2}=\dfrac{2h^{p}}{n^{\left(2\right)}}\sum_{a}
\hat{u}_{i}^{2}\hat{f}_{i}^{2}\hat{u}_{j}^{2}\hat{f}_{j}^{2}K_{nij}^{2}\psi_{ij}^{2}.
\]
The first one is  consistent for $\omega^{2}$ under both the
null and alternative hypothesis, but the latter is faster to compute.
\begin{cor}\label{corr_test_o}
Let $I_n$ be any of the statistics $\widehat{I}_n$ or $\tilde{I}_n$ and let
$\omega_{n}$ denote any of $\widehat{\omega}_{n}$ or $\tilde{\omega}_{n}$.
Under the assumptions of Theorem \ref{Consistency}, the test that rejects
$H_{0}$ when $nh^{p/2}I_{n}/\omega_{n} > z_{1-\alpha}$ is of asymptotic
level $\alpha$ under $H_{0}$ and is consistent under the sequence of local
alternatives $H_{1n}$ provided $\delta_{n}^{2}nh^{p/2}\to\infty$.
\end{cor}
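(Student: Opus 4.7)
The plan is to reduce Corollary \ref{corr_test_o} to Theorem \ref{Consistency} by Slutsky's lemma, once we have established the auxiliary fact that $\widehat{\omega}_n^2 \cvp \omega^2$ and $\tilde{\omega}_n^2 \cvp \omega^2$ both under $H_0$ and under the sequence of local alternatives $H_{1n}$ with $\delta_n \to 0$. Granting this, the conclusion is immediate. Under $H_0$, Theorem \ref{Consistency}(i) with $C=0$ gives $nh^{p/2} I_n \cvd \mathcal{N}(0,\omega^2)$; since $\omega^2>0$, Slutsky yields $nh^{p/2} I_n / \omega_n \cvd \mathcal{N}(0,1)$, so that $\Pr\{nh^{p/2} I_n / \omega_n > z_{1-\alpha}\} \to \alpha$. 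Under $H_{1n}$ with $\delta_n^2 nh^{p/2} \to \infty$, Theorem \ref{Consistency}(ii) gives $nh^{p/2} I_n \cvp \infty$; combined with $\omega_n \cvp \omega>0$ this yields $nh^{p/2} I_n / \omega_n \cvp \infty$, so the rejection probability tends to one.

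The real content is therefore the consistency of the variance estimators. I would treat $\tilde{\omega}_n^2$ first, as it is a sixth-order $U$-statistic whose expectation is easier to compute. Writing $Y_i = r(W_i) + \delta_n d(W_i,X_i) + u_i$ and expanding each factor $(Y_i - Y_k)$ into a sum of a residual difference $(u_i-u_k)$, a smooth-regression difference $(r(W_i)-r(W_k))$, and a perturbation difference $\delta_n(d(W_i,X_i)-d(W_k,X_k))$, the $L_{nik}$ weights integrate against the smooth functions in Assumption \ref{RegulHyp}(i)-(ii). Using the higher-order kernel $L \in \mathcal{K}^p_{\lfloor s \rfloor}$ together with condition (iii) on bandwidths, the contributions from the $r$-differences are of bias order $g^{s}$ and those from the $d$-differences carry an extra factor $\delta_n\to 0$, so both are negligible. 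The surviving leading term is $u_i^2 u_j^2 f^2(W_i) f^2(W_j) h^p K_{nij}^2 \psi_{ij}^2$; the change of variable $s=(W_i-W_j)/h$ turns $h^p K_{nij}^2$ into $\int K^2(s)\,ds$ times the integrand at coincident $W$-arguments, producing exactly the $\omega^2$ displayed in Theorem \ref{Consistency} after conditioning on $(X_1,X_2)$ and using Assumption \ref{RegulHyp}(v). A Hoeffding decomposition, combined with the fourth-moment bounds in Assumptions \ref{Sample} and \ref{RegulHyp}(v)-(vi), controls the variance and delivers convergence in probability.

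For $\widehat{\omega}_n^2$, the identity $\hat{u}_i \hat{f}_i = (n-1)^{-1}\sum_{k\ne i}(Y_i-Y_k) L_{nik}$ rewrites the statistic as $\tilde{\omega}_n^2$ plus diagonal terms where two or more of the indices $i,j,k,k^{\prime},l,l^{\prime}$ coincide; under conditions (ii) and (iv) those diagonals contribute at order $(nh^p)^{-1}$ or smaller, hence $o_p(1)$. This step parallels arguments in \cite{FanLi1996} and \cite{Lavergne2000}. The principal technical obstacle is therefore the clean Hoeffding decomposition of the sixth-order $U$-statistic $\tilde{\omega}_n^2$: one must track simultaneously the $h^p$ and $g^p$ scalings, keep the bias terms of order $g^s$ under control via Assumption \ref{RegulHyp}(i) and (iv), and verify that the $\delta_n d(W,X)$ perturbation leaves the limit $\omega^2$ unchanged. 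Once this is done, everything else is a routine application of Slutsky's lemma to the conclusions of Theorem \ref{Consistency}.
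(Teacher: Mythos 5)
Your reduction to Theorem \ref{Consistency} via Slutsky is exactly how the paper closes the argument, and your identification of the real content --- consistency of the variance estimators --- is correct. Where you diverge is in how that consistency is established, and the direction of the reduction between the two estimators. You attack $\tilde{\omega}_n^2$ first as a sixth-order $U$-statistic, expanding each $(Y_i-Y_k)L_{nik}$ factor and running a Hoeffding decomposition, then recover $\widehat{\omega}_n^2$ as $\tilde{\omega}_n^2$ plus diagonal terms. The paper does the opposite: it explicitly mentions your route as the ``direct approach'' and declines it, instead handling $\widehat{\omega}_n^2$ first by writing $\hat u_i \hat f_i = u_i f_i + Y_i \Delta \hat f_i - \Delta \hat r_i \hat f_i$ and invoking the uniform (in $i$) convergence $\sup_i\{|\Delta \hat r_i\hat f_i|+|\Delta \hat f_i|\}=o_p(1)$ from the empirical-process Lemma \ref{unif_omeg}; this collapses $\widehat{\omega}_n^2$ to the simple second-order $U$-statistic $\frac{2h^p}{n^{(2)}}\sum_a u_i^2 f_i^2 u_j^2 f_j^2 K_{nij}^2\psi_{ij}^2 + o_p(1)$, after which only elementary mean/variance calculations remain, and $\tilde{\omega}_n^2$ is then recovered from $\widehat{\omega}_n^2$ via diagonal terms of size $n^{-1}g^{-p}\times O_p(1)$. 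The trade-off is explicit in the paper: your direct $U$-statistic route needs only $ng^p\to\infty$ but requires bookkeeping over many high-order projections, while the paper's shortcut buys brevity at the price of the stronger bandwidth condition $n^{7/8}g^p/\ln n\to\infty$ already imposed in Theorem \ref{Consistency}(ii). Two small remarks: the diagonal terms separating the two estimators are driven by coincidences in the $L$-kernel indices, so their natural rate is $(ng^p)^{-1}$ rather than $(nh^p)^{-1}$ (your ``or smaller'' keeps the claim true since $h/g\to 0$, but the bandwidth that matters is $g$); and your leading-term computation, yielding $2\,h^p\,\mathbb{E}[u_i^2f_i^2u_j^2f_j^2K_{nij}^2\psi_{ij}^2]\to\omega^2$ via Parseval/Bochner-type arguments, matches the paper's Lemma \ref{AAn} computation of $\mathbb{E}[A_n]$.
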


\subsection{Bootstrap Critical Values}

It is known that asymptotic theory may be inaccurate for small and
moderate samples when using smoothing methods. Hence, as in e.g.
\cite{Hardle1993} or \cite{Delgado2001}, we consider a wild bootstrap
procedure to approximate the quantiles of our test statistic.
Resamples are obtained from $Y_{i}^{*}=\hat{r}_{i}+u_{i}^{*}$, where
$u_{i}^{*}=\eta_{i}\hat{u}_{i}$ and $\eta_{i}$ are i.i.d.  variables
independent of the initial sample with $\mathbb{E}\eta_{i}=0$ and
$\mathbb{E}\eta_{i}^{2}=\mathbb{E}\eta_{i}^{3}=1$, $1\leq i\leq n$.
The $\eta_i$ could for instance follow the two-point law of
\cite{Mammen1993}. With at hand a bootstrap sample
$(Y_{i}^{*},W_i,X_i)$, $1\leq i\leq n$, we obtain a bootstrapped
statistic $I_{n}^{*}$ with bootstrapped observations $Y_{i}^{*}$ in
place of original observations $Y_{i}$.  When the scheme is repeated
many times, the bootstrap critical value $z^\star_{1-\alpha, n}$ at
level $\alpha$ is the empirical $(1-\alpha)$-th quantile of the
bootstrapped test statistics. The asymptotic validity of this
bootstrap procedure is guaranteed by the following result.
\begin{thm}
\label{Bootstrap Consistency}
Suppose Assumptions \ref{Sample}, \ref{RegulHyp}, and Conditions (i)
to (iii) of Theorem \ref{Consistency} hold. Moreover, assume
$\inf_{w\in\mathcal{S}_{W}}f\left(w\right)>0$ and $h/g^{2}\to
0$. Then
for $I_{n}^{*}$ equal to any of $\widehat{I}_{n}^{*}$ and $\tilde{I}_{n}^{*}$,
\[
\sup_{z\in\mathbb{R}}\left|\mathbb{P}\left[nh^{p/2}I_{n}^{*}/\omega_{n}^{*}\leq
  z\mid Y_1,W_{1}, X_{1},\cdots, Y_n, W_{n},
  X_{n}\right]-\Phi\left(z\right)\right| \cvp 0 \, ,
\]
where $\Phi\left(\cdot\right)$ is the standard normal distribution function.
\end{thm}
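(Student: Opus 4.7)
The plan is to mirror the null-hypothesis analysis of Theorem \ref{Consistency}, but conducted conditionally on the original sample $\mathcal{S}_n=\{(Y_i,W_i,X_i)\}_{i=1}^n$ and exploiting that, under the bootstrap data-generating process, $\mathbb{E}^{\ast}[u_i^{\ast}\mid \mathcal{S}_n,W_i,X_i]=0$ with $\mathbb{E}^{\ast}[u_i^{\ast 2}\mid\mathcal{S}_n]=\hat u_i^2$ and $\mathbb{E}^{\ast}[u_i^{\ast 3}\mid\mathcal{S}_n]=\hat u_i^3$. Hence the bootstrap satisfies the null exactly, and Theorem \ref{Consistency}(i) with $C=0$ is the right template.

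First, I would write $Y_i^{\ast}-Y_k^{\ast}=\hat r_i-\hat r_k+u_i^{\ast}-u_k^{\ast}$ and substitute in the definition of $I_n^{\ast}$, producing a sum of (up to four) U-statistics classified by how many $u^{\ast}$'s they contain. The purely deterministic piece (no $u^{\ast}$) is $o_{\Pr}(n^{-1}h^{-p/2})$ because it equals the same object that was shown to be negligible in the proof of Theorem \ref{Consistency} after replacing $Y_i$ by $\hat r_i$; this is where the bias conditions $nh^{p/2}g^{2s}\to 0$ and $h/g^2\to 0$, together with $\inf f>0$, enter. The single-$u^{\ast}$ pieces have conditional mean zero and variance bounded by standard U-statistic moment calculations using $\mathbb{E}^{\ast}u_i^{\ast 2}=\hat u_i^2$ and the uniform kernel-estimation rates provided by Assumption \ref{RegulHyp} and Lemma \ref{unif_omeg}.

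The dominant term is the two-$u^{\ast}$ quadratic form, which after reorganizing the kernels is a second-order, degenerate U-statistic of the form $\frac{1}{n(n-1)}\sum_{i\neq j}\eta_i\eta_j \hat u_i\hat u_j\,\hat f_i\hat f_j K_{nij}\psi_{ij}$ plus smoothing corrections negligible by the $h/g^2\to 0$ condition (as in the original proof). I would apply the conditional CLT for degenerate U-statistics (de Jong's theorem), checking its Lindeberg-type fourth-moment condition and variance convergence. Both conditions reduce, via repeated use of the independence of the $\eta_i$'s from $\mathcal{S}_n$ and the assumed moments $\mathbb{E}\eta_i^2=1$, to statements about empirical averages of kernel-weighted products of $\hat u_i^2 \hat f_i^2$ quantities. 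These are shown to converge in probability to the deterministic expressions in the definition of $\omega^2$ using standard uniform consistency results for kernel residuals under Assumptions \ref{Sample} and \ref{RegulHyp}; the same arguments show that both $\widehat\omega_n^{\ast 2}$ and $\tilde\omega_n^{\ast 2}$ converge in conditional probability to $\omega^2$.

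Finally, combining Slutsky with the conditional CLT and noting that the limit distribution is continuous yields the uniform statement via P\'olya's theorem. The main obstacle will be bookkeeping the approximation $\hat u_i^{\ast}\approx u_i^{\ast}$: because $\hat r_i^{\ast}-\hat r_i$ is a smoothed average of the $u_k^{\ast}$'s, the relevant remainders couple randomness from $\{\eta_i\}$ with the nonparametric smoothing of the original residuals, so the fourth moment bounds controlling these remainders must be computed carefully, using the moment assumption $\mathbb{E}[Y^8]<\infty$, the uniform control of $\hat f_i$ (needing $\inf f>0$), and the tightening of bandwidth relations already present in Theorem \ref{Consistency}.
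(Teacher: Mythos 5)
Your proposal follows essentially the same route as the paper: the same decomposition of $I_n^{*}$ according to the number of bootstrap residuals $u_i^{*}=\eta_i\hat u_i$ appearing in each piece, identification of the degenerate quadratic form $\frac{1}{n^{(2)}}\sum_{a}\eta_i\eta_j\hat u_i\hat u_j\hat f_i\hat f_j K_{nij}\psi_{ij}$ as the dominant term, control of the remainder (including the diagonal terms requiring $h/g^{2}\to0$) via uniform kernel-estimation rates and $\inf f>0$, conditional consistency of the variance estimators, and P\'olya's theorem to upgrade pointwise to uniform convergence. The only cosmetic difference is that you invoke de Jong's CLT for degenerate U-statistics where the paper uses the martingale-array CLT of Hall and Heyde (conditioning sequentially on $\eta_1,\dots,\eta_m$ given the data), but both reduce to the same conditional variance-convergence and fourth-moment Lindeberg checks.
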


\section{Monte Carlo Study}

We investigated the small sample behavior of our test
and studied its performances relative to alternative tests.
We generated data through
\[
Y=\left(W'\theta\right)^{3} - W'\theta+ \delta d\left(X\right) + \varepsilon
\]
where $W$ follow a two-dimensional standard normal, $X$ independently
follows a $q$-variate standard normal, $\varepsilon\sim
\mathcal{N}\left(0, 4\right)$, and we set
$\theta=\left(1,\,-1\right)^{\prime}/\sqrt{2}$.  The null hypothesis
corresponds to $\delta = 0$, and we considered various forms for
$d(\cdot)$ to investigate power.  We only considered the test based on
$\tilde{I}_n$, labelled LMP, as preliminary simulation results showed
that it had similar or better performances than the test based on
$\widehat{I}_n$. We compared it to the test of Lavergne and Vuong
(2000, hereafter LV), and the test of Delgado and Gonzalez-Manteiga
(2001, hereafter DGM). The statistic for the latter test
is  the Cramer-von-Mises statistic
\[
\sum_{i=1}^{n} \left[ \sum_{j=1}^{n}{ \widehat{u}_{j}} \widehat{f}_{j}
  \,\mathbf{1}\left\{W_j \leq W_{i}\right\} \,\mathbf{1}\left\{X_j
  \leq X_{i}\right\} \right]^{2}
\, ,
\]
and critical values are obtained by wild bootstrapping as for our own
statistic.  To compute bootstrap critical values, we used 199
bootstrap replications and the two-point distribution
\[
  \Pr \left( \eta_{i} = \frac{1-\sqrt{5}}{2}  \right)
  =
  \frac{5+\sqrt{5}}{10}
  \; , \;
  \Pr \left( \eta_{i} = \frac{1+\sqrt{5}}{2} \right)
  =
  \frac{5-\sqrt{5}}{10}
  \; .
\]
For all tests, each time a kernel appears, we used the Epanechnikov
kernel applied to the norm of its argument $u$, that is
$0.75\,\left(1-\left\Vert u\right\Vert^2\right)\mathbf{1}\left\{
\left\Vert u\right\Vert <1\right\}$.  The bandwidth parameters are set
to $g=n^{-1/6}$ and $h=c \, n^{-2.1/6}$, and we let $c$ vary to
investigate the sensitivity of our results to the smoothing
parameter's choice.  To study the influence of $\psi(\cdot)$ on our
test, we considered $\psi(x) = l\left(\left\Vert x\right\Vert
\right)$, where $l(\cdot)$ is a triangular or normal density, each
with a second moment equal to one.

Figure \ref{fig:LevelCont} reports the empirical level of the various
tests for $n=100$ based on 5000 replications when we let $c$ and $q$
vary. For our test, bootstrapping yields more accurate rejection levels
than the asymptotic normal critical values for any bandwidth factor
$c$ and dimension $q$.  The choice of $\psi(\cdot)$ does not influence
the results.  The empirical level of LV test is much more sensitive to
the bandwidth and the dimension. The empirical level of the DGM test
is close to the nominal one for a low dimension $q$, but decreases
with increasing $q$.

To investigate power, we considered different forms of alternatives as
specified by $d(\cdot)$. We first focus on a quadratic alternative,
where $d\left(X\right)= \left(X'\beta-1\right)^{2}/\sqrt{2}$, with
$\beta=\left(1,\,,1,\,,\dots\right)^{\prime}/\sqrt{q}$.  Figure
\ref{fig:PowerContQs} reports power curves of the different tests for
the quadratic alternative, $n=100$, and a nominal level of 10\% based
on $2000$ replications.  We also report the power of a Fisher test
based on a linear specification in the components of $X$.  The power
of our test, as well as the one of LV test, increases when the
bandwidth factor $c$ increases.  This is in line with theoretical
findings, though we may expect this relationship to revert for very
large bandwidths.  Our test always dominates LV test, as well as the
Fisher test and DGM test, for any choice of $c$ and any dimension $q$.
The power of all tests decreases when the dimension $q$ increases, but
the more notable degradation is for the DGM test.  In Figure
\ref{fig:PowerContNs}, we let $n$ vary for a fixed dimension $q=5$.
The power of all tests improve, but our main qualitative findings are
not affected.  It is noteworthy that the power advantage of our test
compared to LV test become more pronounced as $n$ increases.  In
Figure \ref{fig:PowerContAlter}, we considered a linear alternative
$d\left(X\right)=X'\beta$ and a sine alternative, $d\left(X\right)=
\sin\left(2 \, X'\beta\right)$. Our main findings remain
unchanged. For a linear alternative, the Fisher test is most powerful
as expected. Compared to this benchmark, the loss of power when using
our test is moderate for a large enough bandwidth factors $c$. For a
sine alternative, our test is more powerful than the Fisher test for
$c=2$ or 4.

We also considered the case of a discrete $X$.  We
generated data following
\[
Y=\left(W'\theta\right)^{3} - W'\theta+ \delta d\left(W\right) \,\mathbf{1}\left\{X=1\right\} + \varepsilon
\]
where $W$ and $\varepsilon$ are generated as before, and $X$ is
Bernoulli with probability of success $p=0.6$.  We compared our test to
two competitors.  The test proposed by Lavergne (2001) is
similar to our test with the main difference that $\psi(\cdot)$ is the
indicator function, i.e. $\psi\left( X_i - X_j\right) =
\mathbf{1}\left\{X_i=X_j\right\}$.
The test of Neumeyer et Dette (2003, hereafter ND) is similar in
spirit to the DGM test.  The details of the simulations are similar to
above.  Figures \ref{fig:LevelDisc} and \ref{fig:PowerDisc} report our
results. Bootstrapping our test and Lavergne's test yield accurate
rejection levels, while the asymptotic tests and the ND test
underrejects. Under a quadratic alternative, the power of our test is
comparable to the one of the ND test for a large enough bandwidth
factor $c$. Under a sine alternative, our test outperforms ND test in
all cases.

\section{Conclusion}

We have developed a testing procedure for the significance of
covariates in a nonparametric regression. Smoothing is entertained
only for the covariates under the null hypothesis. The resulting test
statistic is asymptotically pivotal, and wild bootstrap can be used to
obtain critical values in small and moderate samples.  The test is
versatile, as it applies whether the covariates under test are
continuous and/or discrete.  Simulations reveal that our test
outperforms its competitors in many situations, and especially when
the dimension of covariates is large.



\section{Proofs}

We here provide the proofs of the main results. Technical lemmas are
relegated to the Appendix.

In the following, for any integrable function $\delta(X),$ let
$\mathcal{F}_{X}\left[\delta\right]\left(u\right)=\mathbb{E}[e^{ -2\pi
    i\langle X, \;u\rangle} \delta\left(X\right)],$
$u\in\mathbb{R}^q.$ Moreover, for any index set $I$ not containing $i$
with cardinality $\left|I\right|$,
define $$\widehat{f_{i}^{I}}=\left(n-\left|I\right|-1\right)^{-1}\sum_{k\neq
  i,k\notin I}L_{nik},$$ consistent with $\widehat{f_{i}}$ that
corresponds to the case where $I$ is the empty set.


\subsection{Proof of Theorem \ref{Consistency}}

 We first consider the case $I_n=\tilde{I}_{n}$. Next, we study the
 difference between $\tilde{I}_{n}$ and $\widehat{I}_{n}$ and hence
 deduce the result for $I_n=\widehat{I}_{n}$.

\paragraph{Case  $I_n=\tilde{I}_{n}$.}
Consider the decomposition
\begin{eqnarray*}
I_{n} & = &
\frac{1}{n^{\left(4\right)}}\sum_{a}\left(u_{i}-u_{k}\right)\left(u_{j}-u_{l}\right)L_{nik}L_{njl}K_{nij}\psi_{ij}\\ &
&
+\frac{2}{n^{\left(4\right)}}\sum_{a}\left(u_{i}-u_{k}\right)\left(r_{j}-r_{l}\right)L_{nik}L_{njl}K_{nij}\psi_{ij}\\ &
&
+\frac{1}{n^{\left(4\right)}}\sum_{a}\left(r_{i}-r_{k}\right)\left(r_{j}-r_{l}\right)L_{nik}L_{njl}K_{nij}\psi_{ij}\\ &
= & I_{1}+2I_{2}+I_{3},
\end{eqnarray*}
where
\begin{eqnarray*}
I_{1} & = & \frac{n-2}{n-3}\frac{1}{n^{(2)}}\sum_{a}u_{i}u_{j}f_{i}f_{j}K_{nij}\psi_{ij}+\frac{2\left(n-2\right)}{n-3}\frac{1}{n^{\left(2\right)}}\sum_{a}u_{i}\left(\widehat{f}_{i}^{j}-f_{i}\right)u_{j}f_{j}K_{nij}\psi_{ij}\\
 &  & +\frac{n-2}{n-3}\frac{1}{n^{(2)}}\sum_{a}u_{i}\left(\widehat{f}_{i}^{j}-f_{i}\right)u_{j}\left(\widehat{f}_{j}^{i}-f_{j}\right)K_{nij}\psi_{ij}-\frac{2}{n^{(3)}}\sum_{a}u_{i}f_{i}u_{l}L_{njl}K_{nij}\psi_{ij}\\
 &  & -\frac{2}{n^{\left(3\right)}}\sum_{a}u_{i}\left(\widehat{f}_{i}^{j,l}-f_{i}\right)u_{l}L_{njl}K_{nij}\psi_{ij}+\frac{1}{n^{\left(4\right)}}\sum_{a}u_{k}u_{l}L_{nik}L_{njl}K_{nij}\psi_{ij}\\
 &  & -\frac{1}{n^{\left(4\right)}}\sum_{a}u_{i}u_{j}L_{nik}L_{njk}K_{nij}\psi_{ij}\\
 & = & \frac{n-2}{n-3}\left[I_{0n}+2I_{1,1}+I_{1,2}\right]-2I_{1,3}-2I_{1,4}+I_{1,5}-I_{1,6},
\end{eqnarray*}
and
\begin{eqnarray*}
I_{2} & = & \frac{1}{n^{\left(3\right)}}\sum_{a}u_{i}f_{i}\left(r_{j}-r_{l}\right)L_{njl}K_{nij}\psi_{ij}+\frac{1}{n^{\left(3\right)}}\sum_{a}u_{i}\left(\widehat{f}_{i}^{j,l}-f_{i}\right)\left(r_{j}-r_{l}\right)L_{njl}K_{nij}\psi_{ij}\\
 &  & -\frac{1}{n^{\left(4\right)}}\sum_{a}u_{k}\left(r_{j}-r_{l}\right)L_{nik}L_{njl}K_{nij}\psi_{ij}=I_{2,1}+I_{2,2}-I_{2,3}.
\end{eqnarray*}

In Proposition \ref{Normality} we prove that, under $H_{0},$ $I_{0n}$
is asymptotically centered Gaussian with variance $\omega^{2}$, while
in Proposition \ref{NormalityH1} we prove that, under $H_{1n},$
$I_{0n}$ is asymptotically Gaussian with mean $\mu$ and variance
$\omega^{2}$ provided $\delta_{n}^{2}nh^{p/2}$ converges to some
positive real number.  In Propositions \ref{Ustat} and \ref{Remaining}
we show that all remaining terms in the decomposition of $I_n$ are
asymptotically negligible.

\begin{prop}
\label{Normality}Under the conditions of Theorem \ref{Consistency},
$nh^{p/2}I_{0n} \cvd \mathcal{N}\left(0,\omega^{2}\right)$ under
$H_{0}$.
\end{prop}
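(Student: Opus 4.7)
The plan is to recognize $I_{0n}$ as a degenerate second-order $U$-statistic with a triangular-array kernel depending on $n$ through $h$, and to apply the central limit theorem for degenerate $U$-statistics. Writing $Z_i = (W_i, X_i, u_i)$ and
\[
H_n(Z_i, Z_j) = u_i u_j f_i f_j K_{nij} \psi_{ij},
\]
the evenness of $K(\cdot)$ and $\psi(\cdot)$ makes $H_n$ symmetric, so that $I_{0n} = \binom{n}{2}^{-1} \sum_{i<j} H_n(Z_i, Z_j)$. Under $H_0$, $\mathbb{E}[u \mid W, X] = 0$, and iterated conditioning gives $\mathbb{E}[H_n(Z_i, Z_j) \mid Z_i] = 0$ almost surely, so the Hoeffding decomposition collapses to its degenerate part.

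The next step is to compute $\sigma_n^2 := \mathbb{E}[H_n^2(Z_1,Z_2)]$. Conditioning on $(W_i, X_i)$ to extract $\sigma^2(W,X)$ and making the change of variables $v=(W_1-W_2)/h$, Assumption~\ref{RegulHyp}(v) provides the uniform continuity required to pass to the limit inside the integral and obtain
\[
h^p \sigma_n^2 \longrightarrow \int K^2(v)\,dv \cdot \mathbb{E}\!\left[\int \sigma^2(w,X_1)\sigma^2(w,X_2) f^4(w)\pi(w\mid X_1)\pi(w\mid X_2)\psi^2(X_1-X_2)\,dw\right] = \tfrac{1}{2}\omega^2.
\]
Since the $U$-statistic is degenerate, $\operatorname{Var}(nh^{p/2} I_{0n}) = n^2 h^p \binom{n}{2}^{-1}\sigma_n^2 \to \omega^2$, which pins down the target variance.

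Asymptotic normality then follows from the CLT for degenerate second-order $U$-statistics, provided
\[
\frac{\mathbb{E}[G_n^2(Z_1,Z_2)] + n^{-1}\mathbb{E}[H_n^4(Z_1,Z_2)]}{\sigma_n^4} \longrightarrow 0, \qquad G_n(z_1,z_2) := \mathbb{E}[H_n(Z_3, z_1) H_n(Z_3, z_2)].
\]
The fourth-moment piece is controlled by the same change-of-variables device: the finite fourth conditional moments in Assumption~\ref{RegulHyp}(ii),(v) yield $\mathbb{E}[H_n^4] = O(h^{-3p})$, so $n^{-1}\mathbb{E}[H_n^4]/\sigma_n^4 = O((nh^p)^{-1}) \to 0$ by Condition~(ii) of the theorem. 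The main obstacle is the $G_n$ term: since $\psi$ does \emph{not} shrink with $n$, one has to rely on the compact support of $K$ to localize $W_1$ and $W_2$ within $O(h)$ of each other, so that integrating out $Z_3$ produces a convolution of $K$ with itself at scale $h$ together with a bounded $\psi$-factor; combined with the integrability conditions in Assumption~\ref{RegulHyp}(v), this yields $\mathbb{E}[G_n^2] = O(h^{-p})$, hence $\mathbb{E}[G_n^2]/\sigma_n^4 = O(h^p) \to 0$. The CLT then delivers $nh^{p/2} I_{0n} \cvd \mathcal{N}(0,\omega^2)$ under $H_0$, which is the claim.
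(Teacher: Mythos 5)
Your proof is correct, but it takes a genuinely different (and more packaged) route than the paper. You invoke the CLT for degenerate second-order $U$-statistics with $n$-dependent kernels (Hall, 1984), reducing the problem to the two moment conditions $\mathbb{E}[G_n^2]/\sigma_n^4\to 0$ and $n^{-1}\mathbb{E}[H_n^4]/\sigma_n^4\to 0$; your orders $\mathbb{E}[G_n^2]=O(h^{-p})$, $\mathbb{E}[H_n^4]=O(h^{-3p})$ and $\sigma_n^2\asymp h^{-p}$ are right, and the conditions follow from $h\to 0$ and $nh^p\to\infty$ exactly as you say. The paper instead works at one level lower: it builds the martingale array $S_{n,m}=\sum_{i\le m}G_{n,i}$ with $G_{n,i}=\frac{2h^{p/2}}{n-1}u_if_i\sum_{j<i}u_jf_jK_{nij}\psi_{ij}$ and verifies Corollary 3.1 of Hall and Heyde (1980) directly, splitting the conditional variance $V_n^2$ into a diagonal part $A_n\to\omega^2$ and a cross part $B_n\to 0$, plus a conditional Lindeberg condition via fourth moments. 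The two arguments are close cousins --- Hall's $U$-statistic CLT is itself proved by the martingale method, and your bound on $\mathbb{E}[G_n^2]$ is precisely what makes the paper's $B_n$ negligible, while your $\mathbb{E}[H_n^4]$ bound plays the role of the Lindeberg/variance-of-$A_n$ computations. One further stylistic difference: for the limit $h^p\sigma_n^2\to\omega^2/2$ you use a change of variables together with the uniform continuity in Assumption \ref{RegulHyp}(v) (a Bochner-type argument), whereas the paper passes through Fourier transforms, Plancherel, and dominated convergence; both are valid under the stated assumptions, and the Fourier route is what lets the paper exploit the integrability of $\mathcal{F}[\sigma^2(\cdot,x)f^2(\cdot)\pi(\cdot\mid x)]$ assumed in (v). What your approach buys is brevity and a clean citation; what the paper's buys is a self-contained verification whose intermediate objects ($A_n^*$, $B_n^*$, the conditional Lindeberg sums) are reused almost verbatim in the bootstrap proof of Proposition \ref{BootstrapNormality}, where the conditional-on-the-sample structure makes the explicit martingale formulation the more natural one.
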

\begin{proof}
Let us define the martingale array $\left\{
S_{n,m},\mathcal{F}_{n,m},\,1\leq m\leq n,\, n\geq1\right\} $ where
$S_{n,1}=0,$ and
\[
S_{n,m}=\sum_{i=1}^{m}G_{n,i} \;\;\; \text{ with } \; \;\; G_{n,i}=\dfrac{2h^{p/2}}{n-1}u_{i}f_{i}\sum_{j=1}^{i-1}u_{j}f_{j}K_{nij}\psi_{ij},
\qquad 2\leq i, m \leq n,
\]
and $\mathcal{F}_{n,m}$ is the $\sigma-$field generated by $\left\{ W_{1},\,\dots,\, W_{n},\, X_{1},\,\dots,\, X_{n},\, Y_{1},\,\dots,\, Y_{m}\right\} .$ Thus $nh^{p/2}I_{0n}=S_{n,n}$.
Also define
$$
V_{n}^{2}  =  \sum_{i=2}^{n}E\left[G_{n,i}^{2}\mid\mathcal{F}_{n,i-1}\right]
 =  \dfrac{4h^{p}}{\left(n-1\right)^{2}}\sum_{i=2}^{n}\sigma_{i}^{2}f_{i}^{2}\left(\sum_{j=1}^{i-1}u_{j}f_{j}K_{nij}\psi_{ij}\right)^{2}
$$
where $\sigma_{i}^{2}=\sigma^{2}\left(W_{i},X_{i}\right)$. We can
decompose $V_{n}^{2}$ as
\begin{eqnarray*}
V_{n}^{2} & = & \dfrac{4h^{p}}{\left(n-1\right)^{2}}\sum_{i=2}^{n}\sigma_{i}^{2}f_{i}^{2}\sum_{j=1}^{i-1}\sum_{k=1}^{i-1}u_{j}f_{j}u_{k}f_{k}K_{nij}K_{nik}\psi_{ij}\psi_{ik}\\
 & = & \dfrac{4h^{p}}{\left(n-1\right)^{2}}\sum_{i=2}^{n}\sum_{j=1}^{i-1}\sigma_{i}^{2}f_{i}^{2}u_{j}^{2}f_{j}^{2}K_{nij}^{2}\psi_{ij}^{2}\\
 &  & +\dfrac{8h^{p}}{\left(n-1\right)^{2}}\sum_{i=3}^{n}\sum_{j=2}^{i-1}\sum_{k=1}^{j-1}\sigma_{i}^{2}f_{i}^{2}u_{j}f_{j}u_{k}f_{k}K_{nij}K_{nik}\psi_{ij}\psi_{ik}
 =  A_{n}+B_{n}.
\end{eqnarray*}
The result follows from the Central Limit Theorem for martingale
arrays, see Corollary 3.1 of \cite{Hall1980}. The conditions required
for Corollary 3.1 of \cite{Hall1980}, among which
$V_{n}^{2}\cvp\omega^{2}$, are checked in Lemma \ref{AAn} below.  Its
proof is provided in the Appendix.
\begin{lem}
\label{AAn} Under the conditions  of Proposition \ref{Normality},
\begin{enumerate}
\item\label{An} $A_{n}\cvp \omega^{2}$,
\item\label{Bn}   $B_{n}\cvp 0$,
\item\label{Lindebergh} the martingale difference array
$\left\{ G_{n,i},\,\mathcal{F}_{n,i},\,1\leq i\leq n\right\} $
satisfies the Lindeberg condition
\begin{equation*}
\forall\varepsilon>0,\quad\sum_{i=2}^{n}\mathbb{E}
\left[G_{n,i}^{2}I\left(\left|G_{n,i}\right|>\varepsilon\right)\mid\mathcal{F}_{n,i-1}\right]
\cvp 0 \, .
\end{equation*}
\end{enumerate}
\end{lem}
\end{proof}

\begin{prop}\label{NormalityH1}
Under the conditions of Theorem \ref{Consistency} and $H_{1n},$ if
$\delta_{n}^{2}nh^{p/2}\rightarrow C$ with $0<C<\infty,$
$nh^{p/2}I_{0n} \cvd \mathcal{N}\left(C \mu,\omega^{2}\right)$.
\end{prop}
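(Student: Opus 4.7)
The plan is to decompose the errors under the alternative as $u_i = \delta_n d_i + \varepsilon_i$, where $d_i = d(W_i, X_i)$ and $\varepsilon_i = Y_i - \mathbb{E}[Y_i \mid W_i, X_i]$ satisfies $\mathbb{E}[\varepsilon_i \mid W_i, X_i] = 0$, and then to exploit the bilinearity of $I_{0n}$ in the $u_i u_j$ factors. Expanding $u_i u_j = \delta_n^2 d_i d_j + \delta_n (d_i \varepsilon_j + d_j \varepsilon_i) + \varepsilon_i \varepsilon_j$ yields
\begin{equation*}
n h^{p/2} I_{0n} = \delta_n^2\, n h^{p/2}\, T_1 + 2 \delta_n\, n h^{p/2}\, T_2 + n h^{p/2}\, T_3,
\end{equation*}
where $T_1, T_2, T_3$ are second-order U-statistics built respectively from $d_i d_j$, the symmetrized cross products $(d_i \varepsilon_j + d_j \varepsilon_i)/2$, and $\varepsilon_i \varepsilon_j$, each weighted by $f_i f_j K_{nij} \psi_{ij}$. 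I would treat the three pieces separately and combine via Slutsky.

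For the noise term $T_3$, I would invoke Proposition \ref{Normality} with $\varepsilon_i$ in place of $u_i$. Since $\mathbb{E}[\varepsilon \mid W, X] = 0$ and the conditional variance $\mathbb{E}[\varepsilon^2 \mid W, X] = \sigma^2(W,X) - \delta_n^2 d^2(W,X)$ differs from $\sigma^2$ by an $O(\delta_n^2) = o(1)$ uniform perturbation, Assumption \ref{RegulHyp} and the martingale CLT argument of Proposition \ref{Normality} go through almost verbatim, yielding $n h^{p/2} T_3 \cvd \mathcal{N}(0, \omega^2)$.

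For the signal term $T_1$, the mean is computed by the change of variables $w_2 = w_1 - h t$ inside $h^{-p} K((w_1 - w_2)/h)$; using $\int K = 1$ together with the dominated convergence justified by the regularity of $d\, f\, \pi$ in Assumption \ref{RegulHyp}(vi) gives $\mathbb{E}[T_1] \to \mu$. A Hoeffding decomposition then shows $\mathrm{Var}(T_1) = O(1/n) + O(h^{-p}/n^2)$, the first bound coming from $\mathbb{E}[d^2 \mid W = \cdot] f^2 \in \mathcal{U}^p$ and the second from the boundedness of $\psi$ and $K$ after a change of variables. Under $n h^p \to \infty$ the degenerate term is negligible, so $T_1 \cvp \mu$ and $\delta_n^2\, n h^{p/2}\, T_1 \cvp C \mu$.

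For the cross term $T_2$, iterated conditioning together with $\mathbb{E}[\varepsilon \mid W, X] = 0$ gives $\mathbb{E}[T_2] = 0$, and the same Hoeffding analysis yields $\mathrm{Var}(T_2) = O(1/n)$, whence
\begin{equation*}
\mathrm{Var}\bigl(\delta_n\, n h^{p/2}\, T_2\bigr) = O\bigl(\delta_n^2\, n h^p\bigr) = O\bigl(h^{p/2}\bigr) \to 0,
\end{equation*}
using $\delta_n^2 = O(1/(n h^{p/2}))$. Hence $\delta_n\, n h^{p/2}\, T_2 \cvp 0$, and Slutsky combines the three pieces into $\mathcal{N}(C \mu, \omega^2)$. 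The main obstacle I expect is the bookkeeping in the Hoeffding decompositions of $T_1$ and $T_2$, specifically verifying that the degenerate second-order components contribute $O(h^{-p}/n^2)$ and not more, and that the linear projection terms have $O(1)$ variance despite the apparent $h^{-p}$ factor. This mirrors the difficulty already handled in Proposition \ref{Normality}, but here requires invoking the additional regularity of $d$ in Assumption \ref{RegulHyp}(vi) rather than of $u$.
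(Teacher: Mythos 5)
Your proposal is correct and follows essentially the same route as the paper: decompose $u_i=\delta_n d_i+\varepsilon_i$, apply Proposition \ref{Normality} to the pure-noise part, and show the remaining (signal plus cross) contributions converge in probability to $C\mu$ by computing their mean and bounding their variance, with your variance orders $O(\delta_n^{4}nh^{p})+O(\delta_n^{4})+O(\delta_n^{2}nh^{p})+O(\delta_n^{2})$ matching the paper's exactly. The only cosmetic differences are that the paper keeps the signal and cross terms bundled in a single term $C_n$ rather than splitting them into your $T_1$ and $T_2$, and evaluates $\lim\mu_n$ by Fourier inversion and Parseval's identity (which additionally exhibits $\mu>0$) rather than by your direct change of variables with dominated convergence.
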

\begin{proof}
Let $\varepsilon_{i}=Y_{i}-\mathbb{E}\left[Y_{i}\mid W_{i},\,
  X_{i}\right]$ and let us decompose
\begin{eqnarray*}
nh^{p/2}I_{0n} & = & \dfrac{h^{p/2}}{n-1}\sum_{i=1}^{n}\sum_{j\neq i}u_{i}f_{i}u_{j}f_{j}K_{nij}\psi_{ij}\\
 & = & \dfrac{h^{p/2}}{n-1}\sum_{i=1}^{n}\sum_{j\neq i}\left(\delta_{n}d_{i}+\varepsilon_{i}\right)f_{i}\left(\delta_{n}d_{j}+\varepsilon_{j}\right)f_{j}K_{nij}\psi_{ij}\\
 & = & \dfrac{h^{p/2}}{n-1}\sum_{i=1}^{n}\sum_{j\neq i}\varepsilon_{i}f_{i}\varepsilon_{j}f_{j}K_{nij}\psi_{ij}+\dfrac{\delta_{n}h^{p/2}}{n-1}\sum_{i=1}^{n}\sum_{j\neq i}d_{i}f_{i}\left(\delta_{n}d_{j}+2\varepsilon_{j}\right)f_{j}K_{nij}\psi_{ij}\\
 &=& C_{0n}+C_{n}.
\end{eqnarray*}
By Proposition \ref{Normality}, $C_{0n}\cvd
\mathcal{N}\left(0,\omega^{2}\right). $ As for $C_{n}$, we have
$$
\mathbb{E}\left[C_{n}\right] =
\delta_{n}^{2}nh^{p/2}\mathbb{E}\left[d_{i}f_{i}d_{j}f_{j}K_{nij}\psi_{ij}\right]=
\delta_{n}^{2}nh^{p/2}\mu_{n}
\, .
$$
By repeated application of Fubini's Theorem, Fourier Inverse formula,
Dominated Convergence Theorem, and Parseval's identity, we obtain
\begin{eqnarray*}
\mu_{n} & = &
\mathbb{E}\left[d_{1}f_{2}d_{1}f_{2}K_{n12}\psi_{12}\right]\\ & = &
\mathbb{E}\left[\iint
  d\left(w_{1},X_{1}\right)d\left(w_{2},X_{2}\right)f\left(w_{1}\right)f\left(w_{2}\right)f\left(w_{1}|X_{1}\right)f\left(w_{2}|X_{2}\right)\right.\\ &&\qquad
  \qquad \qquad \qquad\qquad \qquad\qquad \qquad\qquad \qquad\times
  \left. h^{-p}K\left(\dfrac{w_{1}-w_{2}}{h}\right)dw_{1}dw_{2}\;\,\psi\left(X_{1}-X_{2}\right)\right]\\ &
= & \mathbb{E}\left[
  \int\!\!\mathcal{F}\!\left[d\left(\cdot,X_{1}\right)\!f\left(\cdot\right)\pi\left(\cdot\mid
    X_{1}\right)\right]\!\left(t\right)\mathcal{F}\left[d\left(\cdot,X_{2}\right)\!f\left(\cdot\right)\pi\left(\cdot\mid
    X_{2}\right)\right]\!\left(-t\right)\mathcal{F}\left[K\right]\left(ht\right)dt\;\psi\left(X_{1}-X_{2}\right)\right]\\ &
\to & \mathbb{E}\left[ \left[
    \int\mathcal{F}\left[d\left(\cdot,X_{1}\right)f\left(\cdot\right)\pi\left(\cdot\mid
      X_{1}\right)\right]\left(t\right)\mathcal{F}\left[d\left(\cdot,X_{2}\right)f\left(\cdot\right)\pi\left(\cdot\mid
      X_{2}\right)\right]\left(-t\right)dt\;
    \right]\psi\left(X_{1}-X_{2}\right)\right]\\ & = &
\mathbb{E}\left[\int
  d\left(w,X_{1}\right)d\left(w,X_{2}\right)f^{2}\left(w\right)\pi\left(w\mid
  X_{1}\right)\pi\left(w\mid
  X_{2}\right)\psi\left(X_{1}-X_{2}\right)dw\right]\\ &=&\int
\left[\int \mathcal{F}_X\left[
    d\left(w,\cdot\right)\pi\left(w\mid\cdot\right)\right](u)
  \mathcal{F}_X\left[d\left(w,\cdot\right)\pi\left(w\mid\cdot\right)\right](-u)\mathcal{F}[\psi](u)du
  \right] f^{2}\left(w\right)dw\\ &=& \iint \left| \mathcal{F}_X\left[
  d\left(w,\cdot\right)\pi\left(w\mid\cdot\right)\right](u)\right|^2
\mathcal{F}[\psi](u) f^{2}\left(w\right)du dw =\mu \, .
\end{eqnarray*}
Moreover,
\begin{eqnarray*}
\mbox{Var}\left[C_{n}\right] & \leq & \dfrac{4\delta_{n}^{4}h^{p}}{\left(n-1\right)^{2}}\sum_{a}\mathbb{E}\left[d_{i}^{2}f_{i}^{2}d_{k}d_{l}f_{k}f_{l}K_{nik}K_{nil}\psi_{ik}\psi_{il}\right]\\
 &  & +\dfrac{2\delta_{n}^{4}h^{p}}{\left(n-1\right)^{2}}\sum_{a}\mathbb{E}\left[d_{i}^{2}f_{i}^{2}d_{k}^{2}f_{k}^{2}K_{nik}^{2}\psi_{ik}^{2}\right]\\
 &  & +\dfrac{4\delta_{n}^{2}h^{p}}{\left(n-1\right)^{2}}\sum_{a}\mathbb{E}\left[d_{i}f_{i}d_{j}f_{j}\varepsilon_{k}^{2}f_{k}^{2}K_{nik}K_{njk}\psi_{ik}\psi_{jk}\right]\\
 &  & +\dfrac{4\delta_{n}^{2}h^{p}}{\left(n-1\right)^{2}}\sum_{a}\mathbb{E}\left[d_{i}^{2}f_{i}^{2}\varepsilon_{k}^{2}f_{k}^{2}K_{nik}^{2}\psi_{ik}^{2}\right]\\
 & = & O\left(\delta_{n}^{4}nh^{p}\right)+O\left(\delta_{n}^{4}\right)+O\left(\delta_{n}^{2}nh^{p}\right)+O\left(\delta_{n}^{2}\right).
\end{eqnarray*}
Therefore $C_{n}=C\mu_{n}+O_p\left(\delta_{n} n^{1/2} h^{p/2}\right)
\cvp C \mu$, and the desired result follows.
\end{proof}

\bigskip
\begin{prop}
\label{Ustat} Under the conditions of Theorem \ref{Consistency},
\begin{description}
\item [{(i)}] $nh^{p/2}I_{1,3}=\delta_{n}\sqrt{n}h^{p/2}O_{p}\left(1\right)+o_{p}\left(1\right)$,
\item [{(ii)}] $nh^{p/2}I_{1,5}=o_{p}\left(1\right)$,
\item [{(iii)}] $nh^{p/2}I_{1,6}=\delta_{n}^{2}nh^{p/2}o_{p}\left(1\right)+o_{p}\left(1\right)$,
\item [{(iv)}] $nh^{p/2}I_{2,1}=\delta_{n}\sqrt{n}h^{p/2}o_{p}\left(1\right)+\delta_{n}\sqrt{n}h^{p/2}g^{s}O_{p}\left(1\right)+o_{p}\left(1\right)$,
\item [{(v)}] $nh^{p/2}I_{2,3}=o_{p}\left(1\right)$,
\item [{(vi)}] $nh^{p/2}I_{3}=nh^{p/2}O_{p}\left(g^{2s}\right)+o_{p}\left(1\right)$.
\end{description}
\end{prop}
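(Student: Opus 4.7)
The plan is to treat each of the six terms $I_{1,3}, I_{1,5}, I_{1,6}, I_{2,1}, I_{2,3}, I_{3}$ as a U-statistic of order 3 or 4 with $n$-dependent kernel, and to analyze each via its Hoeffding projection decomposition. Under $H_{1n}$, I would write $u_i = \delta_n d(W_i, X_i) + \varepsilon_i$ with $\mathbb{E}[\varepsilon_i \mid W_i, X_i] = 0$; crucially, the constraint $\mathbb{E}[d(W,X) \mid W] = 0$ forces $\mathbb{E}[u_i \mid W_i] = 0$ even under $H_{1n}$. A consequence used repeatedly is that whenever a term contains a factor $u_\ell L_{n\cdot\ell}$ and the conditioning set does not involve $(W_\ell, X_\ell, u_\ell)$, iterated expectations give $\mathbb{E}[u_\ell L_{n\cdot\ell} \mid W_\cdot] = \mathbb{E}[\mathbb{E}[u_\ell \mid W_\ell] L_{n\cdot\ell} \mid W_\cdot] = 0$. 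This systematically kills many Hoeffding projections, and is in particular responsible for the identity $\mathbb{E}[I_{k,l}] = 0$ in several of the sub-cases, even under $H_{1n}$.

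For the purely $u$-weighted terms $I_{1,3}, I_{1,5}, I_{1,6}$, I would identify the surviving rank-$k$ projections after the above cancellations and then compute their orders by changes of variables $w = W + h u$ inside $K$ and $w' = W + g s$ inside $L$, combined with dominated convergence, which replaces bandwidth-dependent kernels by their limits (in particular $\mathbb{E}[L_{nik} \mid W_i] \to f(W_i)$). For $I_{1,3}$ under $H_{1n}$ the rank-$1$ projection on the $l$-index survives and is of the form $\delta_n u_l \Phi(W_l, X_l)$ for a bounded $\Phi$, which after U-statistic averaging and multiplication by $n h^{p/2}$ yields precisely the $\delta_n \sqrt{n} h^{p/2} O_p(1)$ factor in (i). For $I_{1,5}$ and $I_{1,6}$ all rank-$1$ projections vanish, making these statistics more degenerate; bounding their variances via direct second moment calculations (keeping track of the $g^{-p}$ factors produced by the $L$-kernel convolutions) then yields (ii) and (iii).

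For $I_{2,1}$ and $I_{2,3}$, the same Hoeffding strategy applies, but with the additional feature that $\mathbb{E}[(r_j - r_l) L_{njl} \mid W_j]$ is of order $g^s$ because $r f \in \mathcal{D}_s^p$ and $L \in \mathcal{K}_{\lfloor s \rfloor}^p$ is a higher-order kernel in the sense of Definition \ref{KerDef}. This is what injects the extra $g^s$ factor into bound (iv). For the term $I_3$, which contains no $u$-factors at all, both $L$-smoothed differences $(r_i - r_k) L_{nik}$ and $(r_j - r_l) L_{njl}$ contribute $O(g^s)$ biases in expectation, so that their product gives $I_3 = O_p(g^{2s})$; second moment control of this U-statistic is then straightforward.

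The hardest part will be the bookkeeping of bandwidth orders and combinatorial factors across the Hoeffding decompositions, especially for the four-index terms $I_{1,5}, I_{1,6}, I_{2,3}$ and $I_{3}$, where two $L$-smoothings and one $K$-smoothing interact and produce convolution factors such as $g^{-p} (L \star L)((W_i - W_j)/g)$. Conditions (iii) $n h^{p/2} g^{2s} \to 0$ and (iv) $h/g \to 0$ (or $h/g^{2} \to 0$ for $\widehat{I}_n$) are exactly what are needed so that the cross-terms produced by these interactions, once scaled by $n h^{p/2}$, fall into the $o_p(1)$ residuals rather than contributing to the stated leading orders.
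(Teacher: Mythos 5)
Your proposal follows essentially the same route as the paper: each remainder is treated as a $U$-statistic with $n$-dependent kernel, its second moment is bounded through the conditional projections $\mathbb{E}[H_n\mid Z_c]$ (the paper organizes this via the Lavergne--Vuong variance decomposition, which is the same bookkeeping as your Hoeffding projections), projections are killed using $\mathbb{E}[u\mid W]=0$ whenever $u_\ell$ enters only through $L_{n\cdot\ell}$, kernel integrals are handled by Bochner-type change of variables, and the $O(g^{s})$ bias of $\mathbb{E}\left[(r_j-r_l)L_{njl}\mid W_j\right]$ supplies the $g^{s}$ and $g^{2s}$ factors in (iv) and (vi). The paper itself only writes out statement (i) in detail and defers the rest to Lavergne and Vuong (2000), so your plan is an adequate match.

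One small inaccuracy: for $I_{1,6}=\frac{1}{n^{(4)}}\sum_a u_iu_jL_{nik}L_{njk}K_{nij}\psi_{ij}$ it is not true that the mean and all rank-one projections vanish under $H_{1n}$. Here $u_i$ and $u_j$ are tied to $\psi_{ij}$ and $K_{nij}$, so conditioning gives $\mathbb{E}[u\mid W,X]=\delta_n d(W,X)$ rather than $\mathbb{E}[u\mid W]=0$; the mean is of order $\delta_n^{2}$ times a kernel-dependent factor, which is exactly why statement (iii) carries the $\delta_{n}^{2}nh^{p/2}o_{p}(1)$ term (the $o_p(1)$ coming from the extra $n^{-1}g^{-p}$ produced by the diagonal $L_{nik}L_{njk}$ convolution). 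Your cancellation rule applies only to $u_k$ and $u_l$, not to $u_i,u_j$; with that correction the rest of the argument goes through as you describe.
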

\medskip
\begin{prop}
\label{Remaining} Under the conditions of Theorem \ref{Consistency},
\begin{description}
\item [{(i)}] $nh^{p/2}I_{1,1}=\delta_{n}^{2}nh^{p/2}o_{p}\left(1\right)+\delta_{n}\sqrt{n}h^{p/2}o_{p}\left(1\right)+o_{p}\left(1\right)$,
\item [{(ii)}] $nh^{p/2}I_{1,2}=\delta_{n}^{2}nh^{p/2}o_{p}\left(1\right)+\delta_{n}\sqrt{n}h^{p/2}o_{p}\left(1\right)+o_{p}\left(1\right)$,
\item [{(iii)}] $nh^{p/2}I_{1,4}=\delta_{n}^{2}nh^{p/2}o_{p}\left(1\right)+\delta_{n}\sqrt{n}h^{p/2}o_{p}\left(1\right)+\left(ng^{p}\right)^{-1/2}o_{p}\left(1\right)+o_{p}\left(1\right)$,
\item [{(iv)}] $nh^{p/2}I_{2,2}=\delta_{n}^{2}nh^{p/2}o_{p}\left(1\right)+\delta_{n}\sqrt{n}h^{p/2}o_{p}\left(1\right)+o_{p}\left(1\right)$.
\end{description}
\end{prop}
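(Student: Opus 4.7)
The plan is to handle the four items uniformly by an expansion under $H_{1n}$ followed by a bias/stochastic split of every kernel estimation error. Writing $u_i=\varepsilon_i+\delta_n d_i$ with $\mathbb{E}[\varepsilon_i\mid W_i,X_i]=0$ and (from $r(W)=\mathbb{E}[Y\mid W]$) $\mathbb{E}[d(W,X)\mid W]=0$, each of $I_{1,1},I_{1,2},I_{1,4}$ splits into three pieces with prefactors $1,\delta_n,\delta_n^{2}$, while $I_{2,2}$ splits into a $1$-piece (the $\varepsilon_i(r_j-r_l)$ block) and a $\delta_n$-piece. The three summands in each claim of the proposition match these groups exactly, so what needs to be shown is that, after multiplication by $nh^{p/2}$, the piece without $\delta_n$ is $o_p(1)$ and the pieces with $\delta_n^{\,k}$ are $o_p(\delta_n^{\,k} n^{k/2} h^{pk/4})$ (plus the extra $(ng^p)^{-1/2}o_p(1)$ reservoir for item (iii)).

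Next, for every factor $\widehat{f}_i^{\,I}-f_i$ I use
\[
\widehat{f}_i^{\,I}-f_i=\bigl(\widehat{f}_i^{\,I}-\mathbb{E}[\widehat{f}_i^{\,I}\mid W_i]\bigr)+b_n(W_i),
\]
where $b_n(W_i)=O(g^s)$ uniformly thanks to $L\in\mathcal{K}_{\lfloor s\rfloor}^{\,p}$ and $f\in\mathcal{D}_s^{\,p}$. The bias contribution produces a second-order $U$-statistic of exactly the type analysed in Propositions \ref{Normality} and \ref{NormalityH1}, carrying an extra $g^s$ factor; condition (iii) $nh^{p/2}g^{2s}\to 0$ then kills it (the bias appears \emph{once}, so a single $g^s$ is enough). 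The stochastic contribution replaces $\widehat{f}_i^{\,I}-f_i$ by $(n-|I|-1)^{-1}\sum_{k\notin\{i\}\cup I}[L_{nik}-\mathbb{E}L_{nik}]$ and turns the statistic into a third-order $U$-statistic for $I_{1,1},I_{1,2},I_{2,2}$ and a fourth-order one for $I_{1,4}$, with kernel built from the centred $L$ factor multiplied by $K_{nij}\psi_{ij}f_j$, resp. $K_{nij}\psi_{ij}L_{njl}$.

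The crucial simplification for these higher-order $U$-statistics is that their first-order Hoeffding projections vanish, because conditioning on a single $(W_i,X_i,Y_i)$-triple reintroduces either an $\mathbb{E}[\varepsilon\mid W,X]=0$ or, under $H_{1n}$, an $\mathbb{E}[d(W,X)\mid W]=0$. Only the fully degenerate part survives and a direct variance computation gives $O(n^{-3}h^{-p}g^{-p})$ for $I_{1,1},I_{1,2},I_{2,2}$ and $O(n^{-4}h^{-p}g^{-2p})$ for $I_{1,4}$ (the additional $g^{-p}$ comes from the $L_{njl}$ factor). Multiplying by $(nh^{p/2})^2$ and invoking $nh^p\to\infty$ together with $n^{7/8}g^p/\ln n\to\infty$ yields the announced $o_p(1)$ rates, and the additional $(ng^p)^{-1/2}o_p(1)$ reservoir in (iii) is precisely the degenerate-projection rate of the $\varepsilon\varepsilon$-part of $I_{1,4}$, whose variance carries the extra $g^{-p}$ penalty from $L_{njl}$.

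The main obstacle is combinatorial book-keeping rather than probability: after the $\delta_n$-expansion and the bias/variance split each $I_{1,m},I_{2,m}$ generates a dozen or so sub-terms, and for each of them one has to enumerate the coincidence patterns between the arrangement indices $\{i,j,k,l\}$ and the averaging index hidden inside $\widehat{f}_i^{\,I}$, then confirm that either the iterated-conditioning zero or the bias factor $g^s$ is indeed available to produce the projection-based degeneracy. Once this accounting is done, all the individual bounds are immediate consequences of Markov's inequality, standard kernel convolution estimates under Assumption \ref{RegulHyp}, and the same moment arguments already used for Proposition \ref{Ustat}.
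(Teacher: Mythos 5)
Your overall architecture (expand $u_i=\varepsilon_i+\delta_n d_i$, split $\widehat{f}_i^{\,I}-f_i$ into bias plus a centred stochastic part, then treat the resulting higher-order $U$-statistics) is a legitimate alternative to what the paper does, but it contains one claim that is genuinely false and that carries the weight of your argument: that after centring $L$, ``the first-order Hoeffding projections vanish \ldots{} because conditioning on a single triple reintroduces either $\mathbb{E}[\varepsilon\mid W,X]=0$ or, under $H_{1n}$, $\mathbb{E}[d(W,X)\mid W]=0$,'' so that ``only the fully degenerate part survives.'' This is true for the pure-$\varepsilon$ pieces, but fails for every piece containing a $d$. Take the $\delta_n^2$-piece of $I_{1,1}$, with kernel $d_i\,\tilde L_{nik}\,d_j f_j K_{nij}\psi_{ij}$ where $\tilde L_{nik}=L_{nik}-\mathbb{E}[L_{nik}\mid W_i]$: the projection on $Z_k$ is $\mathbb{E}\bigl[d_i\,\tilde L_{nik}\,\phi(Z_i)\mid W_k\bigr]$ with $\phi(Z_i)=\mathbb{E}[d_jf_jK_{nij}\psi_{ij}\mid Z_i]$, and since $\phi$ depends on $X_i$ through $\psi$, the identity $\mathbb{E}[d(W,X)\mid W]=0$ cannot be applied; the projection is generically nonzero. (It had better be: the same coupling of $X_i$ and $X_j$ through $\psi$ is exactly what makes the drift $\mu$ in Theorem \ref{Consistency} strictly positive, so a degeneracy argument based on $\mathbb{E}[d\mid W]=0$ would prove too much.) Likewise the mixed $\varepsilon$--$d$ pieces have surviving second-order projections. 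Your quoted variance rates $O(n^{-3}h^{-p}g^{-p})$ and $O(n^{-4}h^{-p}g^{-2p})$ are therefore only the fully degenerate contributions; the non-degenerate projections you have discarded are precisely the ones that generate the $\delta_n^{2}nh^{p/2}o_p(1)$ and $\delta_n\sqrt{n}h^{p/2}o_p(1)$ terms in the statement, and bounding them (by counting the powers of $\delta_n$ and of $n^{-1}$ attached to each coincidence pattern) is the actual content of the proposition. As written, your proof would deliver the much stronger, and false, conclusion $nh^{p/2}I_{1,1}=o_p(1)$ with no $\delta_n$ terms at all.

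For comparison, the paper avoids both the bias/stochastic split and the Hoeffding decomposition here. It writes $\mathbb{E}[I_{1,1}^2]$ as a sum over the coincidence patterns of the two index pairs $(i,j)$ and $(i',j')$, conditions on $\overline{W}$ to extract the correct power of $\delta_n$ from the $u$'s in each pattern (four distinct indices give $\delta_n^4$, one shared index gives $\delta_n^2$, two shared indices give $h^{-p}$), and obtains the uniform $o(1)$ factor from Lemma \ref{Lambda}, which states $\mathbb{E}[\Delta^2 f_i^j\mid Z_i,Z_j,Z_{i'},Z_{j'}]=o(1)$ using only $f\in\mathcal{U}^p$ and $ng^p\to\infty$ --- no $g^s$ bias rate and no condition (iii) are needed for these terms. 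If you want to keep your route, you must redo the Hoeffding bookkeeping pattern by pattern, recording for each surviving projection both its $\delta_n$-power and its $n^{-c}$ factor and checking it lands in the correct bucket of the statement; alternatively, adopt the paper's cruder but self-contained second-moment computation. A last small point: the $(ng^p)^{-1/2}o_p(1)$ reserve in item (iii) does not match the fully degenerate rate of the $\varepsilon\varepsilon$-part of $I_{1,4}$ you compute (that one gives $(ng^p)^{-1}$ after multiplication by $nh^{p/2}$); it comes from a partial-coincidence term, so your attribution there is also off, though harmlessly so.
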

The proofs of the above propositions follow the ones in
\cite{Lavergne2000}). For illustration, we provide in the
Appendix the proofs of the first statements of each
proposition.

\paragraph{Case $I_n=\widehat{I}_{n}$.}
We have the following decomposition
\begin{equation}\label{ilvifl}
n^{\left(4\right)}\tilde{I}_{n} =
n\left(n-1\right)^{3}\widehat{I}_{n}-n^{\left(3\right)}V_{1n}-2n^{\left(3\right)}V_{2n}+n^{\left(2\right)}V_{3n}
\end{equation}
\begin{eqnarray*}
\mbox{where } \quad
V_{1n}& = &\dfrac{1}{n^{\left(3\right)}}\sum_{a}\left(Y_{i}-Y_{k}\right)\left(Y_{j}-Y_{k}\right)L_{nik}L_{njk}K_{nij}\psi_{ij}\, ,\\
V_{2n}& = &\dfrac{1}{n^{\left(3\right)}}\sum_{a}\left(Y_{i}-Y_{j}\right)\left(Y_{j}-Y_{k}\right)L_{nij}L_{njk}K_{nij}\psi_{ij}\, ,
\\
\mbox{and } \quad
V_{3n} &  = &
\dfrac{1}{n^{\left(2\right)}}\sum_{a}\left(Y_{i}-Y_{j}\right)^{2}L_{nij}^{2}K_{nij}\psi_{ij}
\, .
\end{eqnarray*}
Hence, to show that $\widehat{I}_{n}$ has the same asymptotic
distribution as $\tilde{I}_{n}$, it is sufficient to investigate the
behavior of $V_{1n}$ to $V_{3n}.$ Using $Y_i = r_i + u_i,$ it is
straightforward to see that the dominating terms in $V_{1n},V_{2n}$
and $V_{3n}$ are
$$V_{13}=\dfrac{1}{n^{\left(3\right)}}\sum_{a}\left(r_{i}-r_{k}\right)\left(r_{j}-r_{k}\right)L_{nik}L_{njk}K_{nij}\psi_{ij},$$
$$V_{23}=\dfrac{1}{n^{\left(3\right)}}\sum_{a}\left(r_{i}-r_{j}\right)\left(r_{j}-r_{k}\right)L_{nij}L_{njk}K_{nij}\psi_{ij},
\quad V_{33}=\dfrac{1}{n^{\left(2\right)}}\sum_{a}\left(r_{i}-r_{j}\right)^{2}L^2_{nij}K_{nij}\psi_{ij},$$
respectively.  Now
\begin{eqnarray*}
\mathbb{E}\left[|V_{13}|\right] & = &
\mathbb{E}\left[|\left(r_{i}-r_{k}\right)\left(r_{j}-r_{k}\right)L_{nik}L_{njk}K_{nij}|
  \right]\\ & = &
O\left(g^{-p}\right)\mathbb{E}\left[\left|r_{i}-r_{k}\right|\mathbf{L}_{nik}\mathbb{E}\left[\left|r_{j}-r_{k}\right|\mathbf{K}_{nij}\mid
    Z_{i},Z_{k}\right]\right] = O\left(g^{-p}\right) \, ,
\\ \mathbb{E}\left[|V_{23}|\right] & = &
\mathbb{E}\left[|\left(r_{i}-r_{j}\right)\left(r_{j}-r_{k}\right)L_{nij}L_{njk}K_{nij}|\right]
\\ &= &
\mathbb{E}\left[\mathbb{E}\left[\left|r_{j}-r_{k}\right|\mathbf{L}_{njk}\mid
    Z_{j}\right]\left|r_{i}-r_{j}\right|\mathbf{L}_{nij}\mathbf{K}_{nij}\right]\\ &
= &
o\left(1\right)\mathbb{E}\left[\left|r_{i}-r_{j}\right|\mathbf{L}_{nij}\mathbf{K}_{nij}\right]
= o\left(g^{-p}\right)
\\
\mathbb{E}\left[|V_{33}|\right] & = &
\mathbb{E}\left[\left(r_{i}-r_{j}\right)^{2}L_{nij}^{2}|K_{nij}|\right]\\ &
= &
O\left(g^{-2p}\right)\mathbb{E}\left[\left(r_{i}-r_{j}\right)^{2}\mathbf{K}_{nij}\right]
= o\left(g^{-2p}\right)
\, .
\end{eqnarray*}
It then follows that
$nh^{p/2}\left(\tilde{I}_{n}-\widehat{I}_{n}\right)=O_p\left(h^{p/2}g^{-p}\right)$
which is negligible if $h/g^{2}\to0$.  The asymptotic irrelevance of
the above diagonal terms thus require more restrictive relationships
between the bandwidths $h$ and $g$. For the sake of comparison, recall
that \cite{FanLi1996} impose $h^{(p+q)}g^{-2p}\to0$ while
\cite{Lavergne2000} require only $h^{p+q}g^{-p}\to0$. Since we do not
smooth the covariates $X$, we are able to further relax the
restriction between the two bandwidths.

\subsection{Proof of Corollary \ref{corr_test_o}}

It suffices to prove $\omega_{n}^{2}-\omega^{2} = o_p(1)$ with
$\omega_{n}^{2}$ any of $\widehat{\omega}_{n}^{2}$ or
$\tilde{\omega}_{n}^{2}$.  First we consider the case
$\omega_{n}^{2}=\widehat{\omega}_{n}^{2}.$ A direct approach would
consist in replacing the definition of $\hat u_i \hat f_i$ and $\hat
u_j \hat f_j$, writing $\widehat{\omega}_{n}^{2}$ as a $U-$statistic
of order 6, and studying its mean and variance.  {A shorter approach
  is based on empirical process tools. The price to pay is the
  stronger condition $n^{7/8}g ^p/\ln n \rightarrow \infty$ instead of
  $ng ^p \rightarrow \infty.$} Let $\Delta \hat f_i = \hat f_i - f_i$,
$\Delta \hat r_i \hat f_i = \hat r_i \hat f_i- r_i f_i$, and write
\begin{equation}\label{uifi_unif}
\hat u_i \hat f_i = u_if_i + Y_i \Delta \hat f_i  - \Delta \hat r_i\hat f_i.
\end{equation}
\begin{lem}\label{unif_omeg}
Under Assumption \ref{Sample}, if $r(\cdot)f(\cdot) \in
\mathcal{U}^{p},$  $L(\cdot)$ is a function of bounded
variation, {$g\rightarrow 0,$ and $n^{7/8}g ^p/\ln n \rightarrow \infty,$} then
$$
\sup_{1\leq i\leq n}\{|\Delta \hat r_i\hat f_i| + |\Delta \hat f_i|  \} = o_p(1).
$$
\end{lem}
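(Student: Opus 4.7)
The plan is to dominate $\sup_{1\le i\le n}$ by the uniform supremum over $w\in\mathbb{R}^p$, decompose each kernel-estimation error into a stochastic and a bias part, and control the stochastic part via empirical-process inequalities for VC-type function classes generated by the bounded-variation kernel $L$. The leave-one-out correction in $\hat f_i$ and $\hat r_i\hat f_i$ contributes only a single summand of order $g^{-p}/n$, which is $o_p(1)$ under the stated bandwidth condition and can be absorbed throughout.

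For $\Delta\hat f_i$, I would write $\hat f(w)-f(w)=[\hat f(w)-\mathbb{E}\hat f(w)]+[\mathbb{E}\hat f(w)-f(w)]$. The bias equals $\int[f(w-gv)-f(w)]L(v)\,dv$ and vanishes uniformly in $w$ by uniform continuity of $f$ (inherited from $f\in\mathcal{U}^p$) and dominated convergence. For the stochastic term, because $L$ has bounded variation the class $\{g^{-p}L((w-\cdot)/g):w\in\mathbb{R}^p\}$ is Euclidean/VC-type in the sense of Nolan--Pollard, with envelope $g^{-p}\|L\|_\infty$ and second-moment size $O(g^{-p})$. The Gin\'e--Guillou / Einmahl--Mason uniform concentration inequality then yields $\sup_w|\hat f(w)-\mathbb{E}\hat f(w)|=O_p\bigl(\sqrt{\ln n/(ng^p)}\bigr)=o_p(1)$.

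For $\Delta(\hat r_i\hat f_i)$, the extra difficulty is that $Y$ has only a finite eighth moment. I introduce a truncation level $M_n=n^{1/8}$ and split $Y_k=Y_k\mathbf{1}\{|Y_k|\le M_n\}+Y_k\mathbf{1}\{|Y_k|>M_n\}$. For the tail part, a Markov/union bound gives $\mathbb{P}(\max_k|Y_k|>M_n)\le nM_n^{-8}\mathbb{E}[Y^8]\to 0$, so on a high-probability event the empirical tail sum vanishes, and its centering $\mathbb{E}[|Y|\mathbf{1}\{|Y|>M_n\}]\le M_n^{-7}\mathbb{E}|Y|^8$ is controlled up to the factor $g^{-p}\|L\|_\infty$. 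For the truncated part, the class $\{y\mathbf{1}\{|y|\le M_n\}g^{-p}L((w-\cdot)/g):w\in\mathbb{R}^p\}$ is still VC-type, now with envelope $M_ng^{-p}\|L\|_\infty$, and the Einmahl--Mason / Talagrand inequality gives a uniform stochastic fluctuation of order $M_n\sqrt{\ln n/(ng^p)}$. The bias reduces to $\sup_w\bigl|\int[r(w-gv)f(w-gv)-r(w)f(w)]L(v)\,dv\bigr|\to 0$ by uniform continuity of $rf$.

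The main obstacle is the balance between truncation and concentration. The choice $M_n=n^{1/8}$ is dictated by $\mathbb{E}[Y^8]<\infty$ (to force $nM_n^{-8}\to 0$); the resulting envelope $M_ng^{-p}$ then makes the uniform fluctuation of order $\sqrt{\ln n/(n^{3/4}g^p)}$, which the stated hypothesis $n^{7/8}g^p/\ln n\to\infty$ renders $o_p(1)$ with slack. Precisely this trade-off between the moment condition and the envelope cost in the uniform kernel bound explains why the exponent $7/8$ appears here, rather than the weaker $ng^p/\ln n\to\infty$ obtainable via the longer high-order $U$-statistic approach alluded to after Theorem \ref{Consistency}.
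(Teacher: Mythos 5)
Your overall strategy (bias via uniform continuity, stochastic part via uniform concentration over the kernel-indexed class, truncation of $Y$ at $M_n=n^{1/8}$ to exploit $\mathbb{E}[Y^8]<\infty$) is a legitimate alternative to the paper's argument, which instead applies a single moment bound for the supremum of the empirical process (Theorem 3.1 of van der Vaart and Wellner, 2011) directly to the class $\{YL((W-w)/g)\}$ with the \emph{unbounded} envelope $G=|Y|\sup|L|$, choosing $\upsilon=3/2$ so that the required envelope moment is exactly $\mathbb{E}G^{8}<\infty$ and no truncation is needed. However, your version has a genuine quantitative gap in the last step. You claim the truncated stochastic part has uniform fluctuation of order $M_n\sqrt{\ln n/(ng^p)}=\sqrt{\ln n/(n^{3/4}g^p)}$ and that the hypothesis $n^{7/8}g^p/\ln n\to\infty$ makes this $o_p(1)$ ``with slack.'' The implication goes the wrong way: $\sqrt{\ln n/(n^{3/4}g^p)}\to 0$ requires $n^{3/4}g^p/\ln n\to\infty$, which is \emph{strictly stronger} than the assumed condition (take $g^p=n^{-13/16}$: then $n^{7/8}g^p=n^{1/16}\to\infty$ but $n^{3/4}g^p=n^{-1/16}\to 0$). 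So, as written, your bound does not close under the stated hypothesis.

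The fix is to use the two-term form of the Talagrand/Einmahl--Mason/Gin\'e--Guillou inequality rather than the crude ``envelope times Gaussian rate'' form: for a VC-type class with envelope $U$ and weak variance $\sigma^2$, the supremum of the centered sum is $O_p\bigl(\sqrt{n\sigma^2\ln n}+U\ln n\bigr)$. Here $\sigma^2=O(g^p)$ (using the boundedness of $\mathbb{E}[Y^2\mid W=\cdot]f(\cdot)$ available from Assumption \ref{RegulHyp}, not the crude bound $M_n^2 g^p$), and $U=M_n\sup|L|$. After dividing by $ng^p$ this gives $O_p\bigl(\sqrt{\ln n/(ng^p)}\bigr)+O_p\bigl(\ln n/(n^{7/8}g^p)\bigr)$, and the second term is $o_p(1)$ exactly under $n^{7/8}g^p/\ln n\to\infty$. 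This also corrects your heuristic for where the exponent $7/8$ comes from: it is the linear envelope term $U\ln n/(ng^p)$ (and, consistently, your own tail-centering bound $g^{-p}M_n^{-7}\mathbb{E}|Y|^8=O(n^{-7/8}g^{-p})$) that dictates it, not an inflated square-root term. With that correction your route is complete and genuinely different from the paper's; the remaining pieces (union bound on $\max_k|Y_k|$, Bochner-type bias control, the $O(g^{-p}/n)$ leave-one-out correction) are all fine.
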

The proof relies on the uniform convergence of empirical processes and
is provided in the Appendix. Now proceed as follows:
square Equation (\ref{uifi_unif}), replace $\hat u_i^2 \hat f_i^2$ in
the definition of $\widehat{\omega}_{n}^{2},$ and use Lemma
\ref{unif_omeg} to  deduce that
\[
\widehat{\omega}_{n}^{2} =
\dfrac{2h^{p}}{n^{\left(2\right)}}\sum_{a\left(2\right)} u_{i}^{2}
f_{i}^{2} u_{j}^{2} f_{j}^{2}K_{nij}^{2}\psi_{ij}^{2} + o_p(1)
\, .
\]
Elementary calculations of mean and variance yield
$$ \dfrac{2h^p}{n^{(2)}}\sum_{a\left(2\right)} u_{i}^{2}
  f_{i}^{2} u_{j}^{2} f_{j}^{2}K_{nij}^{2}\psi_{ij}^{2} - \omega^{2} =
  o_p(1),
$$
and thus $\widehat{\omega}_{n}^{2} - \omega^{2} = o_p(1).$

To deal with $\tilde{\omega}_{n}^{2}$, note that
$\tilde{\omega}_{n}^{2} - \widehat{\omega}_{n}^{2}$ consists of
``diagonal'' terms plus a term which is
$O\left(n^{-1}\tilde{\omega}_{n}^{2}\right)$.  By tedious but rather
straightforward calculations, one can check that such diagonal terms
are each of the form $n^{-1}g^{-p}$ times a $U-$statistic which is
bounded in probability. Hence $ \tilde{\omega}_{n}^{2} -
\widehat{\omega}_{n}^{2}= o_p(1)$.

\subsection{Proof of Theorem \ref{Bootstrap Consistency}}

Let $\overline{Z}$ denote the sample $(Y_i,W_i,X_i),$ $1\leq i\leq n.$
Since the limit distribution is continuous, it suffices to prove the
result pointwise by Polya's theorem. Hence we show that $\forall
t\in\mathbb{R}$, $\mathbb{P}\left[nh^{p/2}I_{n}^{*}/\omega_{n}^{*}\leq
  t\mid\overline{Z}\right]-\Phi\left(t\right)=o_p(1)$.

First, we consider the case $I_n^{*}=\tilde{I}_n$. Consider
\begin{eqnarray*}
I_{n,LV}^{*} & = & \dfrac{1}{n^{\left(4\right)}}\sum_{a}\left(\eta_{i}\hat{u}_{i}-\eta_{k}\hat{u}_{k}\right)\left(\eta_{j}\hat{u}_{j}-\eta_{l}\hat{u}_{l}\right)L_{nik}L_{njl}K_{nij}\psi_{ij}\\
 &  & +\dfrac{2}{n^{\left(4\right)}}\sum_{a}\left(\eta_{i}\hat{u}_{i}-\eta_{k}\hat{u}_{k}\right)\left(\hat{r}_{j}-\hat{r}_{l}\right)L_{nik}L_{njl}K_{nij}\psi_{ij}\\
 &  & +\dfrac{1}{n^{\left(4\right)}}\sum_{a}\left(\hat{r}_{i}-\hat{r}_{k}\right)\left(\hat{r}_{j}-r_{l}\right)L_{nik}L_{njl}K_{nij}\psi_{ij}\\
 & = & I_{1}^{*}+2I_{2}^{*}+I_{3}^{*}
\end{eqnarray*}
where we can further decompose
\begin{eqnarray*}
I_{1}^{*} & = & \dfrac{1}{n^{\left(4\right)}}\sum_{a}\eta_{i}\hat{u}_{i}\eta_{j}\hat{u}_{j}L_{nik}L_{njl}K_{nij}\psi_{ij}\\
 &  & -\dfrac{2}{n^{\left(4\right)}}\sum_{a}\eta_{j}\hat{u}_{j}\eta_{k}\hat{u}_{k}L_{nik}L_{njl}K_{nij}\psi_{ij}\\
 &  & +\dfrac{1}{n^{\left(4\right)}}\sum_{a}\eta_{k}\hat{u}_{k}\eta_{l}\hat{u}_{l}L_{nik}L_{njl}K_{nij}\psi_{ij}\\
 & = & I_{1,1}^{*}+I_{1,2}^{*}+I_{1,3}^{*}
\end{eqnarray*}
with %
\begin{eqnarray*}
I_{1,1}^{*} & = & \dfrac{\left(n-1\right)^{2}}{\left(n-3\right)\left(n-4\right)}\times\dfrac{1}{n^{\left(2\right)}}\sum_{a}\eta_{i}\hat{u}_{i}\eta_{j}\hat{u}_{j}\hat{f}_{i}\hat{f}_{j}K_{nij}\psi_{ij}\\
 &  & -\dfrac{2}{n-4}\times\dfrac{1}{n^{\left(3\right)}}\sum_{a}\eta_{i}\hat{u}_{i}\eta_{j}\hat{u}_{j}L_{nik}L_{nij}K_{nij}\psi_{ij}\\
 &  & -\dfrac{1}{n-4}\times\dfrac{1}{n^{\left(3\right)}}\sum_{a}\eta_{i}\hat{u}_{i}\eta_{j}\hat{u}_{j}L_{nik}L_{njk}K_{nij}\psi_{ij}\\
 &  & -\dfrac{1}{\left(n-3\right)\left(n-4\right)}\times\dfrac{1}{n^{\left(2\right)}}\sum_{a}\eta_{i}\hat{u}_{i}\eta_{j}\hat{u}_{j}L_{nij}^{2}K_{nij}\psi_{ij}\\
 & = & I_{0n}^{*}-\dfrac{2}{n-4}I_{1,1,1}^{*}-\dfrac{1}{n-4}I_{1,1,2}^{*}-\dfrac{1}{\left(n-3\right)\left(n-4\right)}I_{1,1,3}^{*}.
\end{eqnarray*}
Now let $D_{n}^{*}=\tilde{I}_{n}^{*}-I_{0n}^{*}$ and write
\begin{eqnarray*}
\Pr\left(\dfrac{nh^{p/2}\tilde{I}_{n}^{*}}{\tilde{\omega}_{n}^{*}}
\leq t\mid\overline{Z}\right) & = &
\!\!\Pr\left(\dfrac{nh^{p/2}\left(I_{0n}^{*}+D_{n}^{*}\right)}{\tilde{\omega}_{n}^{*}}
\leq t\mid\overline{Z}\right)\\
 & = &
\!\!\Pr\!\!\left(\dfrac{nh^{p/2}I_{0n}^{*}}{\widehat{\omega}_{n}}
+\dfrac{nh^{p/2}D_{n}^{*}}{\widehat{\omega}_{n}}
+\dfrac{nh^{p/2}\left(I_{0n}^{*}+D_{n}^{*}\right)}{\widehat{\omega}_{n}}
\left(\dfrac{\tilde{\omega}_{n}}{\widehat{\omega}_{n}^{*}}-1\right)
\leq t\mid\overline{Z}\right).
\end{eqnarray*}
It thus  suffices to prove that
$$
\Pr\left(\dfrac{nh^{p/2}I_{0n}^{*}}{\hat{\omega}_{n,FL}}\leq
t\mid\overline{Z}\right)-\Phi\left(t\right)\xrightarrow{p}0 \qquad \forall t \in \mathbb{R}\, ,$$
\begin{equation}\label{r_star_1}
\dfrac{nh^{p/2}D_{n}^{*}}{\hat{\omega}_{n,FL}}=o_p(1)\, ,\qquad \text{ and } \qquad  \dfrac{nh^{p/2}\left(I_{0n}^{*}+D_{n}^{*}\right)}{\hat{\omega}_{n,FL}}\left(\dfrac{\hat{\omega}_{n,FL}}{\hat{\omega}_{n,LV}^{*}}-1\right) =o_{p}\left(1\right).
\end{equation}
The first result is stated below.
\begin{prop}
Under the conditions of Theorem \ref{Bootstrap Consistency}, conditionally
on the observed sample, the statistic $nh^{p/2}I_{0n}^{*}/\hat{\omega}_{n,FL}$ converges in law to a standard normal distribution.
\label{BootstrapNormality}
\end{prop}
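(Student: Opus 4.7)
The plan is to mimic the martingale argument used to prove Proposition \ref{Normality}, but conditionally on the observed sample $\overline{Z}=\{(Y_i,W_i,X_i):1\le i\le n\}$ and with the bootstrap weights $\eta_i\hat{u}_i\hat{f}_i$ playing the role of $u_if_i$. Since $\hat{\omega}_{n,FL}$ is $\overline{Z}$-measurable and converges in probability to $\omega$ by Corollary \ref{corr_test_o}, it suffices by a conditional Slutsky argument to show that $nh^{p/2}I_{0n}^{*}$ is asymptotically $\mathcal{N}(0,\omega^{2})$ given $\overline{Z}$.

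To this end I will set $\mathcal{F}_{n,m}^{*}=\sigma(\overline{Z},\eta_1,\dots,\eta_m)$ and
\[
G_{n,i}^{*}=\frac{2h^{p/2}}{n-1}\,\eta_i\hat{u}_i\hat{f}_i\sum_{j=1}^{i-1}\eta_j\hat{u}_j\hat{f}_jK_{nij}\psi_{ij},\qquad 2\le i\le n.
\]
Because $\eta_i$ has mean zero and is independent of both $\overline{Z}$ and of $\{\eta_j:j<i\}$, $(G_{n,i}^{*},\mathcal{F}_{n,i}^{*})$ is a martingale difference array, and up to the factor $(n-1)^{2}/[(n-3)(n-4)]\to1$ coming from the definition of $I_{0n}^{*}$, one has $nh^{p/2}I_{0n}^{*}=\sum_{i=2}^{n}G_{n,i}^{*}$. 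I will then verify the two hypotheses of Corollary 3.1 of \cite{Hall1980} conditionally on $\overline{Z}$.

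The core step is the convergence of the conditional variance. Using $\mathbb{E}[\eta_i^{2}]=1$,
\[
V_{n}^{*2}=\sum_{i=2}^{n}\mathbb{E}\bigl[G_{n,i}^{*2}\mid\mathcal{F}_{n,i-1}^{*}\bigr]=\frac{4h^{p}}{(n-1)^{2}}\sum_{i=2}^{n}\hat{u}_i^{2}\hat{f}_i^{2}\Bigl(\sum_{j=1}^{i-1}\eta_j\hat{u}_j\hat{f}_jK_{nij}\psi_{ij}\Bigr)^{2}=A_{n}^{*}+B_{n}^{*},
\]
where $A_{n}^{*}$ gathers the diagonal terms ($j=k$) and $B_{n}^{*}$ the off-diagonal ones. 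Since $\mathbb{E}[\eta_j^{2}]=1$, $\mathbb{E}[A_{n}^{*}\mid\overline{Z}]=[n/(n-1)]\,\hat{\omega}_{n,FL}^{2}\cvp\omega^{2}$ by Corollary \ref{corr_test_o}, and the conditional variance of $A_{n}^{*}$ is bounded by $\mathrm{Var}(\eta_j^{2})$ times a normalized sum that is $O_p(n^{-1})$. For $B_{n}^{*}$, the cross terms $\eta_j\eta_k$ with $j\neq k$ are centered so $\mathbb{E}[B_{n}^{*}\mid\overline{Z}]=0$, and its conditional variance will be bounded by an argument analogous to the one used for $B_{n}$ in Lemma \ref{AAn}. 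The Lindeberg condition will then be verified in the strengthened fourth-moment form $\sum_{i=2}^{n}\mathbb{E}[G_{n,i}^{*4}\mid\mathcal{F}_{n,i-1}^{*}]=o_p(1)$, which reduces after expansion to bounded fourth moments of $\eta$ multiplying normalized $U$-statistic expressions in $\hat{u}_i\hat{f}_i$.

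The main obstacle is the systematic replacement of the estimated quantities $\hat{u}_i\hat{f}_i$ by the truths $u_if_i$ inside these conditional moment calculations. Using $\hat{u}_i\hat{f}_i-u_if_i=Y_i\Delta\hat{f}_i-\Delta\hat{r}_i\hat{f}_i$ from equation (\ref{uifi_unif}), I will combine the uniform rate provided by Lemma \ref{unif_omeg} with the bound $\max_{i\le n}|Y_i|=O_p(n^{1/8})$ implied by $\mathbb{E}[Y^{8}]<\infty$ --- a balance built into the condition $n^{7/8}g^p/\ln n\to\infty$ --- to show that each substitution produces only $o_p(1)$ additional error. The bootstrap analogues of $A_n$, $B_n$, and the Lindeberg sum then inherit the same probability limits as their non-bootstrap counterparts in Lemma \ref{AAn}, completing the argument.
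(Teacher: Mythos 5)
Your proposal follows essentially the same route as the paper's proof: a conditional martingale CLT (Corollary 3.1 of \cite{Hall1980}) with the same filtration and increments, the same diagonal/off-diagonal split $V_{n}^{*2}=A_{n}^{*}+B_{n}^{*}$ with $\mathbb{E}[A_{n}^{*}\mid\overline{Z}]=[n/(n-1)]\hat{\omega}_{n,FL}^{2}$, a fourth-moment Lindeberg bound, and the replacement of $\hat{u}_{i}\hat{f}_{i}$ by $u_{i}f_{i}$ via Lemma \ref{unif_omeg}. The only cosmetic difference is that the paper controls the resulting $Y_{i}^{2k}$ factors by keeping them inside expectations of the normalized sums (using $\mathbb{E}[Y^{8}]<\infty$) and multiplying by the $o_{p}(1)$ uniform remainders, rather than invoking a $\max_{i\leq n}|Y_{i}|=O_{p}(n^{1/8})$ bound, but both devices work.
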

\begin{proof}
We proceed as in the proof of Proposition \ref{Normality} and check the
conditions for a CLT for martingale arrays, see Corollary 3.1 of
\cite{Hall1980}.  Define the martingale array $\left\{
S_{n,m}^{*},\,\mathcal{F}_{n,m}^{*},\,1\leq m\leq n,\, n\geq1\right\}
$ where $\mathcal{F}_{n,m}^{*}$ is the $\sigma$-field generated by
$\left\{ \overline{Z},\,\eta_{1},\,\dots,\eta_{m}\right\} $,
$S_{n,1}^{*}=0$, and $S_{n,m}^{*}=\sum_{i=1}^{m}G_{n,i}^{*}$ with
$$G_{n,i}^{*}=\dfrac{2h^{p/2}}{n-1}\eta_{i}\hat{u}_{i}\sum_{j=1}^{i-1}\eta_{j}\hat{u}_{j}\hat{f}_{i}\hat{f}_{j}K_{nij}\psi_{ij}
\, .
$$
Then
\[
I_{0n}^{*}  =
 \dfrac{\left(n-1\right)^{2}}{\left(n-3\right)\left(n-4\right)}\times\dfrac{1}{n^{\left(2\right)}}\sum_{a}\eta_{i}\hat{u}_{i}\eta_{j}\hat{u}_{j}\hat{f}_{i}\hat{f}_{j}K_{nij}\psi_{ij}
 =
\dfrac{\left(n-1\right)^{2}}{\left(n-3\right)\left(n-4\right)} S_{n,n}^{*}
\, .
\]
Now consider
\begin{eqnarray*}
V_{n}^{2*} & = & \sum_{i=2}^{n}\mathbb{E}\left[G_{n,i}^{2*}\mid\mathcal{F}_{n,i-1}^{*}\right]\\
 & = & \dfrac{4h^{p}}{\left(n-1\right)^{2}}\sum_{i=2}^{n}\sum_{j=1}^{i-1}\sum_{k=1}^{i-1}\hat{u}_{i}^{2}\eta_{j}\eta_{k}\hat{u}_{j}\hat{u}_{k}\hat{f}_{i}^{2}\hat{f}_{j}\hat{f}_{k}K_{nij}K_{nik}\psi_{ij}\psi_{ik}\\
 & = & \dfrac{4h^{p}}{\left(n-1\right)^{2}}\sum_{i=2}^{n}\sum_{j=1}^{i-1}\hat{u}_{i}^{2}\eta_{j}^{2}\hat{u}_{j}^{2}\hat{f}_{i}^{2}\hat{f}_{j}^{2}K_{nij}^{2}\psi_{ij}^{2}\\
 &  & +\dfrac{8h^{p}}{\left(n-1\right)^{2}}\sum_{i=3}^{n}\sum_{j=2}^{i-1}\sum_{k=1}^{j-1}\hat{u}_{i}^{2}\eta_{j}\eta_{k}\hat{u}_{j}\hat{u}_{k}\hat{f}_{i}^{2}\hat{f}_{j}\hat{f}_{k}K_{nij}K_{nik}\psi_{ij}\psi_{ik}\\
 & = & A_{n}^{*}+B_{n}^{*}.
\end{eqnarray*}
Note that $\mathbb{E}\left[A_{n}^{*}\mid\overline{Z}\right]=[n/(n-1)]\mathbb{E}\left[\widehat{\omega}_{n}^{2}\right]$
and that %
\begin{eqnarray*}
\mbox{Var}\left[\tilde{A}_{n}^{*}\mid\overline{Z}\right] & \leq & \dfrac{16h^{2p}\mathbb{E}\left[\eta^{4}\right]}{\left(n-1\right)^{4}}\sum_{i=2}^{n}\sum_{i^{\prime}=2}^{n}\sum_{j=1}^{i\wedge i^{\prime}-1}\hat{u}_{i}^{2}\hat{u}_{i^{\prime}}^{2}\hat{u}_{j}^{4}\hat{f}_{i}^{2}\hat{f}_{i^{\prime}}^{2}\hat{f}_{j}^{4}K_{nij}^{2}K_{ni^{\prime}j}^{2}\psi_{ij}^{2}\psi_{i^{\prime}j}^{2}\\
 & \leq & \dfrac{16h^{2p}\mathbb{E}\left[\eta^{4}\right]}{\left(n-1\right)^{4}}\sum_{i=2}^{n}\sum_{j=1}^{i-1}\hat{u}_{i}^{4}\hat{u}_{j}^{4}\hat{f}_{i}^{4}\hat{f}_{j}^{4}K_{nij}^{4}\psi_{ij}^{4}\\
 &  & +\dfrac{32h^{2p}\mathbb{E}\left[\eta^{4}\right]}{\left(n-1\right)^{4}}\sum_{i=3}^{n}\sum_{i^{\prime}=2}^{i-1}\sum_{j=1}^{i^{\prime}-1}\hat{u}_{i}^{2}\hat{u}_{i^{\prime}}^{2}\hat{u}_{j}^{4}\hat{f}_{i}^{2}\hat{f}_{i^{\prime}}^{2}\hat{f}_{j}^{4}K_{nij}^{2}K_{ni^{\prime}j}^{2}\psi_{ij}^{2}\psi_{i^{\prime}j}^{2}\\
 &=& Q_{1n}+Q_{2n}.
\end{eqnarray*}
On the other hand,
\begin{eqnarray*}
\mathbb{E}\left[B_{n}^{*2}\mid\overline{Z}\right] & = & \dfrac{64h^{2p}}{\left(n-1\right)^{4}}\sum_{i=3}^{n}\sum_{i^{\prime}=3}^{n}\sum_{j=2}^{i\wedge i^{\prime}-1}\sum_{k=1}^{j-1}\hat{u}_{i}^{2}\hat{u}_{i^{\prime}}^{2}\hat{u}_{j}^{2}\hat{u}_{k}^{2}\hat{f}_{i}^{2}\hat{f}_{i^{\prime}}^{2}\hat{f}_{j}^{2}\hat{f}_{k}^{2}K_{nij}K_{ni^{\prime}j}K_{nik}K_{ni^{\prime}k}\psi_{ij}\psi_{i^{\prime}j}\psi_{ik}\psi_{i^{\prime}k}\\
 & = & \dfrac{64h^{2p}}{\left(n-1\right)^{4}}\sum_{i=3}^{n}\sum_{j=2}^{i-1}\sum_{k=1}^{j-1}\hat{u}_{i}^{4}\hat{u}_{j}^{2}\hat{u}_{k}^{2}\hat{f}_{i}^{4}\hat{f}_{j}^{2}\hat{f}_{k}^{2}K_{nij}^{2}K_{nik}^{2}\psi_{ij}^{2}\psi_{ik}^{2}\\
 &  & +\dfrac{128h^{2p}}{\left(n-1\right)^{4}}\sum_{i=4}^{n}\sum_{i^{\prime}=3}^{i-1}\sum_{j=2}^{i^{\prime}-1}\sum_{k=1}^{j-1}\hat{u}_{i}^{2}\hat{u}_{i^{\prime}}^{2}\hat{u}_{j}^{2}\hat{u}_{k}^{2}\hat{f}_{i}^{2}\hat{f}_{i^{\prime}}^{2}\hat{f}_{j}^{2}\hat{f}_{k}^{2}K_{nij}K_{ni^{\prime}j}K_{nik}K_{ni^{\prime}k}\psi_{ij}\psi_{i^{\prime}j}\psi_{ik}\psi_{i^{\prime}k}
 \\
 &=& Q_{3n}+Q_{4n}.
\end{eqnarray*}
Finally the Lindeberg condition involves
\begin{align*}
 & \sum_{i=1}^{n}\mathbb{E}\left[G_{n,i}^{2*}I\left(\left|G_{n,i}^{*}\right|>\varepsilon\right)\mid\mathcal{F}_{n,i-1}^{*}\right]\\
\leq & \dfrac{1}{\varepsilon^{4}}\sum_{i=1}^{n}\mathbb{E}\left[G_{n,i}^{4*}\mid\mathcal{F}_{n,i-1}^{*}\right]\\
\leq & \dfrac{16h^{2p}\mathbb{E}\left[\eta^{4}\right]}{\varepsilon^{4}\left(n-1\right)^{4}}\sum_{i=2}^{n}\sum_{j=1}^{i-1}\sum_{k=1}^{i-1}\hat{u}_{i}^{4}\hat{u}_{j}^{2}\hat{u}_{k}^{2}\hat{f}_{i}^{4}\hat{f}_{j}^{2}\hat{f}_{k}^{2}K_{nij}^{2}K_{nik}^{2}\psi_{ij}^{2}\psi_{ik}^{2}\\
\leq & \dfrac{16h^{2p}\mathbb{E}\left[\eta^{4}\right]}{\varepsilon^{4}\left(n-1\right)^{4}}\sum_{i=2}^{n}\sum_{j=1}^{i-1}\hat{u}_{i}^{4}\hat{u}_{j}^{4}\hat{f}_{i}^{4}\hat{f}_{j}^{4}K_{nij}^{4}\psi_{ij}^{4}\\
 & +\dfrac{32h^{2p}\mathbb{E}\left[\eta^{4}\right]}{\varepsilon^{4}\left(n-1\right)^{4}}\sum_{i=3}^{n}\sum_{j=2}^{i-1}\sum_{k=1}^{j-1}\hat{u}_{i}^{4}\hat{u}_{j}^{2}\hat{u}_{k}^{2}\hat{f}_{i}^{4}\hat{f}_{j}^{2}\hat{f}_{k}^{2}K_{nij}^{2}K_{nik}^{2}\psi_{ij}^{2}\psi_{ik}^{2}\\
 &= Q_{5n}+Q_{6n}.
\end{align*}
It thus suffices to show that $Q_{jn}= o_{p}(1)$, $j=1,\ldots 6$.
Now, there exist positive random variables $\tilde{\gamma}_{1n}$ and
$\tilde{\gamma}_{2n}$ such that
$\tilde{\gamma}_{1n}+\tilde{\gamma}_{2n}=o_{p}\left(1\right)$ and
$$
\hat{u}_{i}^{2k}\hat{f}_{i}^{2k}\leq
3^{2k-1} \left(u_{i}^{2k}f_{i}^{2k}+Y_{i}^{2k}f_{i}^{2k}
\tilde{\gamma}_{1n}^{2k}+\tilde{\gamma}_{2n}^{2k}\right)
\qquad \forall 1\leq i\leq n \quad \mbox{and  } \quad \forall k = 1,2
\in\left\{ 1,2\right\}
\, .
$$
Indeed,
$
\hat{u}_{i}\hat{f}_{i} =
u_{i}f_{i}+Y_{i}f_{i}f_{i}^{-1}\left(\hat{f}_{i}-f_{i}\right)
+\left[\hat{r}_{i}\hat{f}_{i}-r_{i}f_{i}\right]
=  u_{i}f_{i}+Y_{i}f_{i}\gamma_{1i}-\gamma_{2i}
$,
where $\sup_{1\leq i\leq
  n}\left|\gamma_{ji}\right|\leq\tilde{\gamma_{j}}$ and
$\tilde{\gamma_{j}}=o_{p}\left(1\right)$ by Lemma \ref{unif_omeg}.
Hence
\[
\hat{u}_{i}^{2}\hat{f}_{i}^{2}\leq3\left(u_{i}^{2}f_{i}^{2}+Y_{i}^{2}f_{i}^{2}
\tilde{\gamma}_{1n}^{2}+\tilde{\gamma}_{2n}^{2}\right)
\, .
\]
The inequality for $k=2$ is obtained similarly.  Using these
inequalities, one can bound the expectations of $|Q_{1n}|$ to
$|Q_{6n}|$ and thus show that $|Q_{1n}|+\cdots+|Q_{6n}|=o_p(1)$.
\end{proof}

Next we show (\ref{r_star_1}). First we need the following.
\begin{prop}
Under the conditions of Theorem \ref{Bootstrap Consistency},
$\dfrac{\hat{\omega}_{n,FL}}{\hat{\omega}_{n,FL}^{*}}\xrightarrow{p}1
$ and
$
\dfrac{\hat{\omega}_{n,FL}}{\hat{\omega}_{n,LV}^{*}}\xrightarrow{p}1
$.
\label{VarianceBootstrap}\end{prop}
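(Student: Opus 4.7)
The plan is to reduce both convergences to a single claim: each of $\widehat{\omega}_{n,FL}^{*2}$ and $\widehat{\omega}_{n,LV}^{*2}$ converges to $\omega^{2}$ in probability. Since Corollary \ref{corr_test_o} already gives $\widehat{\omega}_{n,FL}^{2}\cvp\omega^{2}$, Slutsky's lemma then delivers both ratios in the statement.

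For $\widehat{\omega}_{n,FL}^{*2}$, I would first decompose the bootstrapped residual as $\hat{u}_{i}^{*}\hat{f}_{i}^{*}=\eta_{i}\hat{u}_{i}\hat{f}_{i}+R_{ni}$, where $R_{ni}$ collects the contribution from re-smoothing the $\hat{r}_{j}$'s and from the cross-sum $(n-1)^{-1}\sum_{k\neq i}\eta_{k}\hat{u}_{k}L_{nik}$. Lemma \ref{unif_omeg} together with $h/g^{2}\to 0$ yield $\sup_{i}|R_{ni}|=o_{p}(1)$ conditionally on $\overline{Z}$, so that $R_{ni}$ contributes only $o_{p}(1)$ to $\widehat{\omega}_{n,FL}^{*2}$ after multiplication by $h^{p}K_{nij}^{2}\psi_{ij}^{2}$ and summation. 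The leading piece is
\[
\frac{2h^{p}}{n^{(2)}}\sum_{a}\eta_{i}^{2}\eta_{j}^{2}\hat{u}_{i}^{2}\hat{f}_{i}^{2}\hat{u}_{j}^{2}\hat{f}_{j}^{2}K_{nij}^{2}\psi_{ij}^{2},
\]
whose conditional expectation given $\overline{Z}$ equals $\widehat{\omega}_{n,FL}^{2}$ since the $\eta_{i}$ are i.i.d. with $\mathbb{E}\eta_{i}^{2}=1$. Writing $\eta_{i}^{2}\eta_{j}^{2}-1=(\eta_{i}^{2}-1)+(\eta_{j}^{2}-1)+(\eta_{i}^{2}-1)(\eta_{j}^{2}-1)$ gives a conditionally mean-zero remainder whose conditional variance expands as a sum over quadruples $(i,j,i',j')$ of $\eta$-moments (up to order four) times $K_{nij}^{2}K_{ni'j'}^{2}\psi_{ij}^{2}\psi_{i'j'}^{2}$ factors, which are bounded in probability via the same moment estimates used to prove Corollary \ref{corr_test_o}. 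A conditional Markov inequality then yields $\widehat{\omega}_{n,FL}^{*2}=\widehat{\omega}_{n,FL}^{2}+o_{p}(1)$, whence the first ratio tends to $1$ in probability.

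For the LV version, the proof of Corollary \ref{corr_test_o} has already shown $\tilde{\omega}_{n}^{2}-\widehat{\omega}_{n}^{2}=o_{p}(1)$ by expressing the 6-th order sum as a 2-nd order sum plus diagonals each of the form $n^{-1}g^{-p}$ times a bounded-in-probability $U$-statistic. I would apply the same reduction at the bootstrap level: write $\widehat{\omega}_{n,LV}^{*2}=\widehat{\omega}_{n,FL}^{*2}+\text{diagonals}^{*}$ and control each bootstrap diagonal using $\hat{u}_{i}^{*}\hat{f}_{i}^{*}=\eta_{i}\hat{u}_{i}\hat{f}_{i}+R_{ni}$ together with $\mathbb{E}\eta=0$ and $\mathbb{E}\eta^{2}=\mathbb{E}\eta^{3}=1$. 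Under $h/g^{2}\to 0$, these diagonals are $o_{p}(1)$, exactly as in the non-bootstrap decomposition \eqref{ilvifl}.

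The main obstacle is the combinatorial bookkeeping needed to identify which $\eta$-moment patterns survive when one expands the 6-index bootstrap sum defining $\widehat{\omega}_{n,LV}^{*2}$ and the 4-index conditional variance of $\widehat{\omega}_{n,FL}^{*2}$. These are the same kinds of moment calculations that drive the proof of Corollary \ref{corr_test_o}, now carrying additional factors of $\eta_{i}$'s; the conditions of Theorem \ref{Bootstrap Consistency}, together with $\inf_{w}f(w)>0$ used in Lemma \ref{unif_omeg}, ensure that all required uniform bounds on kernel-residual products are available. Once this bookkeeping is in place, both convergences follow from standard conditional moment arguments and Slutsky's lemma.
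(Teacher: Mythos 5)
Your proposal is correct and follows essentially the same route as the paper: you approximate the bootstrap residuals by $\eta_i\hat{u}_i\hat{f}_i$ uniformly in $i$ (this is exactly the content of Lemma \ref{DeltaUiStar}), observe that the leading term has conditional mean $\widehat{\omega}_{n,FL}^{2}$ given the sample, kill the remainder by a conditional variance/Markov argument involving $\mathrm{Var}(\eta^2)$ and fourth powers, and dispose of the LV--FL difference as negligible diagonal terms. The only cosmetic difference is that you phrase the conclusion as both bootstrap variances converging to $\omega^2$ plus Slutsky, whereas the paper shows $\widehat{\omega}_{n,FL}^{*2}-\widehat{\omega}_{n,FL}^{2}=o_p(1)$ directly and then divides; these are equivalent given Corollary \ref{corr_test_o}.
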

The proof uses the following result, which is proved in the Appendix.
\begin{lem}\label{DeltaUiStar}
Under the conditions of Theorem \ref{Bootstrap Consistency},
$\sup_{1\leq i\leq n} |\hat{u}_{i}^{*}\hat{f}_{i}-u_{i}^{*}\hat{f}_{i}| = o_{p}\left(1\right)$,
where
$u_{i}^{*} =\eta_{i} \widehat{u}_{i}$ and
\[
\hat{u}_{i}^{*}  =  Y_{i}^{*}-\dfrac{\sum_{k\neq i}Y_{k}^{*}L_{nik}}{\sum_{k\neq i}L_{nik}}
\, .
\]
\end{lem}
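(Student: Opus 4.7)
The plan is to reduce the claim to a uniform bound on a kernel-weighted bootstrap sum and then separate that sum into a deterministic-given-data piece and a centered bootstrap piece to which a conditional concentration inequality can be applied.

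First I would establish the key identity. Using $u_i^* = Y_i^* - \hat r_i$ and $\hat u_i^* = Y_i^* - \hat r_i^*$, with $\hat r_i^* \hat f_i = (n-1)^{-1}\sum_{k\neq i} Y_k^* L_{nik}$, one obtains
\[
\hat u_i^* \hat f_i - u_i^* \hat f_i \;=\; -(\hat r_i^* - \hat r_i)\hat f_i \;=\; -\frac{1}{n-1}\sum_{k\neq i}(\eta_k - 1)\,\hat u_k\, L_{nik},
\]
since $Y_k^* - Y_k = (\eta_k - 1)\hat u_k$. Plugging $\hat u_k = u_k - (\hat r_k - r_k)$ into the right-hand side and grouping terms yields the decomposition $\hat u_i^*\hat f_i - u_i^*\hat f_i = -A_{1i} + A_{2i} + B_i$, with $A_{1i} = (n-1)^{-1}\sum_{k\neq i}\eta_k u_k L_{nik}$, $A_{2i} = (n-1)^{-1}\sum_{k\neq i} u_k L_{nik}$, and $B_i = (n-1)^{-1}\sum_{k\neq i}(\eta_k - 1)(\hat r_k - r_k) L_{nik}$.

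The terms $A_{2i}$ and $B_i$ are comparatively easy. The quantity $A_{2i}$ is the usual ``noise'' component of a kernel regression estimator at $W_i$; combining truncation of $u_k$ (justified by $\mathbb{E}[Y^8]<\infty$) with the same uniform empirical-process bounds used in the proof of Lemma \ref{unif_omeg} gives $\sup_i |A_{2i}| = o_p(1)$ already under $ng^p/\ln n\to\infty$. For $B_i$, Lemma \ref{unif_omeg} combined with $\inf_w f(w) > 0$ and the boundedness of $rf$ delivers $\sup_k |\hat r_k - r_k| = o_p(1)$, while $|\eta_k - 1|$ is bounded (the two-point bootstrap law has compact support) and $(n-1)^{-1}\sum_{k\neq i}|L_{nik}|$ is uniformly bounded in $i$ with high probability since $L$ is of bounded variation; these combine to give $\sup_i |B_i| = o_p(1)$.

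The hard step is the control of $A_{1i}$, which is the only piece with non-trivial bootstrap randomness. Conditionally on the sample $\overline Z$, the summands $\eta_k u_k L_{nik}/(n-1)$ are independent and centered, each bounded in absolute value by a quantity of order $n^{1/8}/(ng^p)$ using $\max_k|u_k| = O_p(n^{1/8})$ from $\mathbb{E}[Y^8]<\infty$, with summed conditional variances of order $O_p(1/(ng^p))$ by the standard kernel variance calculation under Assumption \ref{RegulHyp}. Applying Bernstein's inequality conditional on $\overline Z$ together with a union bound over the $n$ indices $i$ will give, with probability tending to one,
\[
\max_{1\leq i\leq n}|A_{1i}| \;\lesssim\; \sqrt{\frac{\ln n}{ng^p}} \,+\, \frac{n^{1/8}\ln n}{ng^p},
\]
and both terms vanish precisely under the strengthened bandwidth condition $n^{7/8}g^p/\ln n\to\infty$ imposed in Theorem \ref{Consistency}. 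This final step is the main obstacle, since it is where the reinforced rate on $g$ becomes indispensable and one must carefully match the Bernstein variance and envelope bounds to the moment condition on $Y$ and to the kernel regularity assumptions.
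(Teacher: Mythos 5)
Your starting identity is correct and, after unwinding, coincides with the paper's own decomposition: writing $Y_k^*-Y_k=(\eta_k-1)\hat u_k$ gives $\hat u_i^*\hat f_i-u_i^*\hat f_i=-\frac{1}{n-1}\sum_{k\neq i}(\eta_k-1)\hat u_k L_{nik}$, which is exactly the sum of the two correction terms $-\frac{1}{n-1}\sum_{k\neq i}u_k^*L_{nik}+\frac{1}{n-1}\sum_{k\neq i}(\hat r_i-\hat r_k)L_{nik}$ that the paper isolates; your split into $-A_{1i}+A_{2i}+B_i$ is a legitimate (arguably cleaner) regrouping, and your treatment of $B_i$ via $\sup_k|\hat r_k-r_k|=o_p(1)$ matches the paper's. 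Where you genuinely diverge is the uniform control of the centered kernel sums $A_{1i}$ and $A_{2i}$: the paper recycles the uniform-entropy maximal inequality underlying Lemma \ref{unif_omeg}, applying it to the classes $\{\eta u L((W-w)/g)\}$ and $\{|\eta| |L|((W-w)/g)\}$ (bounded variation of $L$ giving polynomial covering numbers), whereas you truncate the residuals at $n^{1/8}$ using $\mathbb{E}[Y^8]<\infty$ and apply a conditional Bernstein inequality with a union bound over the $n$ evaluation points $W_i$. Both routes land on the same constraint: your envelope term $n^{1/8}\ln n/(ng^p)$ vanishes exactly when $n^{7/8}g^p/\ln n\to\infty$, which makes the origin of that reinforced bandwidth condition more transparent than in the paper. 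Two caveats. First, your argument needs the multipliers $\eta_i$ to be bounded (you invoke the two-point law), while the theorem only specifies moment conditions on $\eta$ and offers Mammen's law as one example; the paper's entropy route only needs enough moments of $\eta$, so your proof is slightly less general, though it covers the recommended implementation. Second, your claim that $\sup_i|A_{2i}|=o_p(1)$ holds ``already under $ng^p/\ln n\to\infty$'' is optimistic: since $u$ is unbounded, the same $n^{1/8}$ truncation reappears and forces the $n^{7/8}$ rate there too; this is harmless because the stronger condition is assumed anyway, but the statement as written is not justified.
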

\begin{proof}
Using Lemma \ref{DeltaUiStar}, we have
\[
\hat{\omega}_{n,FL}^{*2}=\omega_{n}^{*2}+o_{p}\left(1\right)
\]
where $\omega_{n}^{*2}=\dfrac{2h^{p}}{n^{\left(2\right)}}\sum_{a}u_{i}^{*2}u_{j}^{*2}\hat{f}_{i}^{2}\hat{f}_{j}^{2}K_{nij}^{2}\psi_{ij}^{2}$.
Notice that $\mathbb{E}\left[\omega_{n}^{*2}\mid\overline{Z}\right]=\hat{\omega}_{n,FL}^{2}$
and that
$$
\mbox{Var}\left(\omega_{n}^{*2}-\hat{\omega}_{n,FL}^{2}\right) = \mbox{Var}\left(\mathbb{E}\left[\omega_{n}^{*2}-\hat{\omega}_{n,FL}^{2}\mid\overline{Z}\right]\right)
+\mathbb{E}\left[\mbox{Var}\left(\omega_{n}^{*2}\mid\overline{Z}\right)\right]
$$
where the first term is zero and $$\mbox{Var}\left(\omega_{n}^{*2}\mid\overline{Z}\right)=\dfrac{8h^{2p}\mbox{Var}\left(\eta^{2}\right)}{\left\{n^{\left(2\right)}\right\}^{2}}\sum_{a}\hat{u}_{i}^{4}\hat{u}_{j}^{4}\hat{f}_{i}^{4}\hat{f}_{j}^{4}K_{nij}^{4}\psi_{ij}^{4}.$$
Then, $$\dfrac{\hat{\omega}_{n,FL}}{\hat{\omega}_{n,FL}^{*}}=1+\dfrac{\hat{\omega}_{n,FL}-\hat{\omega}_{n,FL}^{*}}{\hat{\omega}_{n,FL}^{*}}=1+\dfrac{o_{p}\left(1\right)}{\omega^{2}\left[1+o_{p}\left(1\right)\right]} = 1+o_p(1).$$
Since $\hat{\omega}_{n,LV}^{*}-\hat{\omega}_{n,FL}^{*}$ contains only diagonal terms,  we  deduce that $\hat{\omega}_{n,FL}/\hat{\omega}_{n,LV}^{*}\xrightarrow{p}1.$
\end{proof}
We next have to bound $D_{n}^{*}=I_{n,LV}^{*}-I_{0n}^{*}.$ For this,
let us decompose $$\hat{r}_{i}-\hat{r}_{k}=\left(\hat{r}_{i}-r_{i}\right)-\left(\hat{r}_{k}-r_{k}\right)+\left(r_{i}-r_{k}\right)$$
and replace all such differences appearing in the definition of $D_{n}^{*}$.
First, let us look at $I_{3}^{*}$ which does not contain any bootstrap variable $\eta.$ We obtain
\begin{eqnarray*}
I_{3}^{*} & = & \dfrac{1}{n^{\left(4\right)}}\sum_{a}\left(\hat{r}_{i}-\hat{r}_{k}\right)\left(\hat{r}_{j}-\hat{r}_{l}\right)L_{nik}L_{njl}K_{nij}\psi_{ij}\\
 & = & \dfrac{1}{n^{\left(4\right)}}\sum_{a}\left(r_{i}-r_{k}\right)\left(r_{j}-r_{l}\right)L_{nik}L_{njl}K_{nij}\psi_{ij} \\
& & +\dfrac{1}{n^{\left(4\right)}}\sum_{a}\left(\hat{r}_{i}-r_{i}\right)\left(\hat{r}_{j}-r_{j}\right)L_{nik}L_{njl}K_{nij}\psi_{ij}\\
 &  & +\dfrac{1}{n^{\left(4\right)}}\sum_{a}\left(\hat{r}_{k}-r_{k}\right)\left(\hat{r}_{l}-r_{l}\right)L_{nik}L_{njl}K_{nij}\psi_{ij}\\
& & +\dfrac{2}{n^{\left(4\right)}}\sum_{a}\left(\hat{r}_{i}-r_{i}\right)\left(r_{j}-r_{l}\right)L_{nik}L_{njl}K_{nij}\psi_{ij}\\
 &  & -\dfrac{2}{n^{\left(4\right)}}\sum_{a}\left(\hat{r}_{k}-r_{k}\right)\left(r_{j}-r_{l}\right)L_{nik}L_{njl}K_{nij}\psi_{ij}\\
& &  -\dfrac{2}{n^{\left(4\right)}}\sum_{a}\left(\hat{r}_{k}-r_{k}\right)\left(\hat{r}_{j}-r_{j}\right)L_{nik}L_{njl}K_{nij}\psi_{ij}\\
 & = & I_{3,1}^{*}+I_{3,2}^{*}+I_{3,3}^{*}+2I_{3,4}^{*}-2I_{3,5}^{*}-2I_{3,6}^{*}.
\end{eqnarray*}
Next, use the fact that
\begin{eqnarray}\label{dsqa}
\hat{r}_{i}-r_{i} & = & \left(n-1\right)^{-1}\hat{f}_{i}^{-1}\sum_{i^{\prime}\neq i}\left(Y_{i^{\prime}}-r_{i}\right)L_{nii^{\prime}}\nonumber \\
 & = & \left(n-1\right)^{-1}\hat{f}_{i}^{-1}\sum_{i^{\prime}\neq i}\left(r_{i^{\prime}}-r_{i}\right)L_{nii^{\prime}}+\left(n-1\right)^{-1}\hat{f}_{i}^{-1}\sum_{i^{\prime}\neq i}u_{i^{\prime}}L_{nii^{\prime}}
\end{eqnarray}
and further replace terms like $\hat{r}_{i}-r_{i}.$
Among the terms $I_{3,1}^{*}$ to $I_{3,6}^{*},$ the term $I_{3,1}^{*}$ could be easily handled with existing results in \cite{Lavergne2000}. Namely $nh^{p/2}I_{3,1}^{*}=nh^{p/2}O_p\left(g^{2s}\right)+o_p\left(1\right)$ by Proposition 7 of \cite{Lavergne2000}. For the other five terms we have to control the density estimates appearing in the denominators.
%
For this purpose, let us introduce the notation $\Delta\left(f_{i}^{I}\right)^{-1}=\left(\hat{f}_{i}^{I}\right)^{-1}-f_{i}^{-1}$
and write
\begin{equation}\label{zzae}
\dfrac{n-\left|I\right|}{n-1}\times\hat{f}_{i}^{-1} = \left(\dfrac{\left(n-\left|I\right|\right)\hat{f}_{i}^{I}}{\left(n-1\right)\hat{f}_{i}}-1\right)\left(\hat{f}_{i}^{I}\right)^{-1}+\left(\hat{f}_{i}^{I}\right)^{-1}
= \dfrac{\sum_{k\in I}L_{nik}}{\left(n-1\right)\hat{f}_{i}\hat{f}_{i}^{I}}+\Delta\left(f_{i}^{I}\right)^{-1}+f_{i}^{-1}.
\end{equation}
Then, we obtain for instance
\begin{eqnarray*}
 I_{3,5}^{*}&=  & \dfrac{1}{n^{\left(4\right)}}\sum_{a}\left(\hat{r}_{k}-r_{k}\right)\left(r_{j}-r_{l}\right)L_{nik}L_{njl}K_{nij}\psi_{ij}\\
 & = & \dfrac{1}{n^{\left(5\right)}}\sum_{a\left(4\right)}\sum_{k^{\prime}\neq k}f_{k}^{-1}\left(r_{k^{\prime}}-r_{k}\right)\left(r_{j}-r_{l}\right)L_{nkk^{\prime}}L_{nik}L_{njl}K_{nij}\psi_{ij}\\
 &  & +\dfrac{1}{n^{\left(5\right)}}\sum_{a\left(4\right)}\sum_{k^{\prime}\neq k}\Delta\left(f_{k}^{i,j,l,k^{\prime}}\right)^{-1}\left(r_{k^{\prime}}-r_{k}\right)\left(r_{j}-r_{l}\right)L_{nkk^{\prime}}L_{nik}L_{njl}K_{nij}\psi_{ij}\\
 &  & +\dfrac{1}{\left(n-1\right)n^{\left(5\right)}}\sum_{a\left(4\right)}\sum_{k^{\prime}\neq k}\left(\hat{f}_{k}\hat{f}_{k}^{i,j,l,k^{\prime}}\right)^{-1}\left(L_{nik}+L_{njk}+L_{nlk}+L_{nk^{\prime}k}\right)\\
 && \qquad \qquad \qquad \qquad \qquad \qquad \times \left(r_{k^{\prime}}-r_{k}\right)\left(r_{j}-r_{l}\right)L_{nkk^{\prime}}L_{nik}L_{njl}K_{nij}\psi_{ij}\\
 &  & +\dfrac{1}{n^{\left(5\right)}}\sum_{a\left(4\right)}\sum_{k^{\prime}\neq k}f_{k}^{-1}u_{k^{\prime}}\left(r_{j}-r_{l}\right)L_{nkk^{\prime}}L_{nik}L_{njl}K_{nij}\psi_{ij}\\
 &  & +\dfrac{1}{n^{\left(5\right)}}\sum_{a\left(4\right)}\sum_{k^{\prime}\neq k}\Delta f_{k}^{-1}u_{k^{\prime}}\left(r_{j}-r_{l}\right)L_{nkk^{\prime}}L_{nik}L_{njl}K_{nij}\psi_{ij}\\
 &  & +\dfrac{1}{\left(n-1\right)n^{\left(5\right)}}\sum_{a\left(4\right)}\sum_{k^{\prime}\neq k}\left(\hat{f}_{k}\hat{f}_{k}^{i,j,l,k^{\prime}}\right)^{-1}\left(L_{nik}+L_{njk}+L_{nlk}+L_{nk^{\prime}k}\right)\\
 && \qquad \qquad \qquad \qquad \qquad \qquad \times u_{k^{\prime}}\left(r_{j}-r_{l}\right)L_{nkk^{\prime}}L_{nik}L_{njl}K_{nij}\psi_{ij}\\
 & = & I_{3,5,1}^{*}+I_{3,5,2}^{*}+I_{3,5,3}^{*}+I_{3,5,4}^{*}+I_{3,5,5}^{*}+I_{3,5,6}^{*}.
\end{eqnarray*}
Next, if we consider for instance $I_{3,5,1}^{*}$ that contains only terms like $f_{i}^{-1}$ appearing from the decomposition \ref{zzae}, we obtain
\begin{eqnarray*}
I_{3,5,1}^{*} & = & \dfrac{1}{n^{\left(5\right)}}\sum_{a\left(5\right)}f_{k}^{-1}\left(r_{k^{\prime}}-r_{k}\right)\left(r_{j}-r_{l}\right)L_{nkk^{\prime}}L_{nik}L_{njl}K_{nij}\psi_{ij}\\
 &  & +\dfrac{1}{n^{\left(5\right)}}\sum_{a\left(4\right)}f_{k}^{-1}\left(r_{i}-r_{k}\right)\left(r_{j}-r_{l}\right)L_{nik}^{2}L_{njl}K_{nij}\psi_{ij}\\
 &  & +\dfrac{1}{n^{\left(5\right)}}\sum_{a\left(4\right)}f_{k}^{-1}\left(r_{j}-r_{k}\right)\left(r_{j}-r_{l}\right)L_{njk}L_{nik}L_{njl}K_{nij}\psi_{ij}\\
 &  & +\dfrac{1}{n^{\left(5\right)}}\sum_{a\left(4\right)}f_{k}^{-1}\left(r_{l}-r_{k}\right)\left(r_{j}-r_{l}\right)L_{nlk}L_{nik}L_{njl}K_{nij}\psi_{ij}\\
 & = & I_{3,5,1,1}^{*}+I_{3,5,1,2}^{*}+I_{3,5,1,3}^{*}+I_{3,5,1,4}^{*}
\end{eqnarray*}
where the terms $I_{3,5,1,2}^{*}$ to $I_{3,5,1,4}^{*}$ are  called ``diagonal terms''. Such terms require more restrictions on the bandwidths.
next, the terms with containing terms like  $\Delta\left(f_{i}^{I}\right)^{-1}$ produced by the decomposition (\ref{zzae})
can be treated like in the Propositions 8 to 11 of Lavergne et Vuong (2000).
Finally, given that $I$ is finite and with fixed cardinal
$$\left(n-1\right)^{-1}\hat{f}_{i}^{-1}\left(\hat{f}_{i}^{I}\right)^{-1}\sum_{k\in I}L_{nik}=O_{p}\left(n^{-1}g^{-p}\right)=o_p(1)$$ given that $\left\Vert f^{-1}\right\Vert _{\infty}<\infty.$ Therefore the terms of $I_{3}^{*}$ containing $\left(n-1\right)^{-1}\hat{f}_{i}^{-1}\left(\hat{f}_{i}^{I}\right)^{-1}\sum_{k\in I}L_{nik}$ can be easily handled by taking absolute values.
Now let us investigate the diagonal term $I_{3,5,1,2}^{*}$. We have
\begin{eqnarray}
\label{eq:diagonalTerm}
\mathbb{E}\left[|I_{3,5,1,2}^{*}|\right] \nonumber
 & = & O\left(n^{-1}\right)\mathbb{E}\left[f_{k}^{-1}\left|r_{j}-r_{k}\right|\left|r_{j}-r_{l}\right||L_{njk}||L_{nik}||L_{njl}||K_{nij}|\right]\nonumber\\
 & = & O\left(n^{-1}g^{-p}\right)\mathbb{E}\left[f_{k}^{-1}\left|r_{j}-r_{k}\right|\left|r_{j}-r_{l}\right||L_{njk}||L_{njl}||K_{nij}|\right]\nonumber\\
 & = & O\left(n^{-1}g^{-p}\right)\mathbb{E}\left[f_{k}^{-1}\left|r_{j}-r_{k}\right||L_{njk}|\mathbb{E}\left[\left|r_{j}-r_{l}\right||L_{njl}|\mid Z_{j}\right]|K_{nij}|\right]\nonumber\\
 & = & o\left(n^{-1}g^{-p}\right)\mathbb{E}\left[f_{k}^{-1}\left|r_{j}-r_{k}\right||L_{njk}||K_{nij}|\right]\nonumber\\
 & = & o\left(n^{-1}g^{-p}\right)\nonumber.
\end{eqnarray}
To prove that he term $I_{3,5,1,2}^{*}= o_p(nh^{p/2})$ it suffices to prove $\mathbb{E}\left[|I_{3,5,1,2}^{*}|\right]= o(nh^{p/2})$ and this latter rate  is implied by the condition $h/g^2 = o(1).$
This additional condition on the bandwidths is not surprising as the bootstrapped statistic introduced ``diagonal''
terms as in Fan et Li (1996) which indeed require the condition $h/g^{2}\to 0$.

Let us now consider a term in the decomposition of $D_{n}^{*}$ that
involve bootstrap variables $\eta$, namely we investigate $I_{2}^{*}.$
The arguments for the other terms are similar. Consider
\begin{eqnarray*}
I_{2}^{*} & = & \dfrac{1}{n^{\left(4\right)}}\sum_{a}\left(\eta_{i}\hat{u}_{i}-\eta_{k}\hat{u}_{k}\right)\left(\hat{r}_{j}-\hat{r}_{l}\right)L_{nik}L_{njl}K_{nij}\psi_{ij}\\
 & = & \dfrac{1}{n^{\left(4\right)}}\sum_{a}\eta_{i}\hat{u}_{i}\left(r_{j}-r_{l}\right)L_{nik}L_{njl}K_{nij}\psi_{ij}
 +\dfrac{1}{n^{\left(4\right)}}\sum_{a}\eta_{i}\hat{u}_{i}\left(\hat{r}_{j}-r_{j}\right)L_{nik}L_{njl}K_{nij}\psi_{ij}\\
 &  & -\dfrac{1}{n^{\left(4\right)}}\sum_{a}\eta_{i}\hat{u}_{i}\left(\hat{r}_{l}-r_{l}\right)L_{nik}L_{njl}K_{nij}\psi_{ij}
 -\dfrac{1}{n^{\left(4\right)}}\sum_{a}\eta_{k}\hat{u}_{k}\left(r_{j}-r_{l}\right)L_{nik}L_{njl}K_{nij}\psi_{ij}\\
 &  & -\dfrac{1}{n^{\left(4\right)}}\sum_{a}\eta_{k}\hat{u}_{k}\left(\hat{r}_{j}-r_{j}\right)L_{nik}L_{njl}K_{nij}\psi_{ij}
 +\dfrac{1}{n^{\left(4\right)}}\sum_{a}\eta_{k}\hat{u}_{k}\left(\hat{r}_{l}-r_{l}\right)L_{nik}L_{njl}K_{nij}\psi_{ij}\\
 & = & I_{2,1}^{*}+I_{2,2}^{*}-I_{2,3}^{*}-I_{2,4}^{*}-I_{2,5}^{*}+I_{2,6}^{*}.
\end{eqnarray*}
Next it suffices to use the fact that $$\hat{u}_{i}=u_{i}-\hat{f}_{i}^{-1}\sum_{i^{\prime}\neq i}u_{i^{\prime}}L_{nii^{\prime}}+\hat{f}_{i}^{-1}\sum_{i^{\prime}\neq i}\left(r_{i}-r_{i^{\prime}}\right)L_{nii^{\prime}}.$$ For instance, using this identity with $I_{2,1}^{*}$ we can write
\begin{eqnarray*}
I_{2,1}^{*} & = & \dfrac{1}{n^{\left(4\right)}}\sum_{a}\eta_{i}u_{i}\left(r_{j}-r_{l}\right)L_{nik}L_{njl}K_{nij}\psi_{ij}\\
 &  & -\dfrac{1}{\left(n-1\right)n^{\left(4\right)}}\sum_{a}\sum_{i^{\prime}\neq i}\hat{f}_{i}^{-1}\eta_{i}u_{i^{\prime}}\left(r_{j}-r_{l}\right)L_{nik}L_{njl}K_{nij}\psi_{ij}\\
 &  & +\dfrac{1}{\left(n-1\right)n^{\left(4\right)}}\sum_{a}\sum_{i^{\prime}\neq i}\hat{f}_{i}^{-1}\eta_{i}\left(r_{i}-r_{i^{\prime}}\right)\left(r_{j}-r_{l}\right)L_{nik}L_{njl}K_{nij}\psi_{ij}\\
 & = & \dfrac{1}{n^{\left(3\right)}}\sum_{a}\eta_{i}u_{i}f_{i}\left(r_{j}-r_{l}\right)L_{njl}K_{nij}\psi_{ij}\\
 &  & +\dfrac{1}{n^{\left(4\right)}}\sum_{a}\eta_{i}u_{i}\left(r_{j}-r_{l}\right)\Delta f_{i}^{j,l}L_{njl}K_{nij}\psi_{ij}\\
 &  & -\dfrac{1}{\left(n-1\right)n^{\left(4\right)}}\sum_{a}\sum_{i^{\prime}\neq i}f_{i}^{-1}\eta_{i}u_{i^{\prime}}\left(r_{j}-r_{l}\right)L_{nik}L_{njl}K_{nij}\psi_{ij}\\
 &  & -\dfrac{1}{n^{\left(5\right)}}\sum_{a}\Delta\left(f_{i}^{j,k,l,i^{\prime}}\right)^{-1}\eta_{i}u_{i^{\prime}}\left(r_{j}-r_{l}\right)L_{nik}L_{njl}K_{nij}\psi_{ij}\\
 &  & -\dfrac{1}{\left(n-1\right)n^{\left(4\right)}}\sum_{a}\sum_{i^{\prime}\neq i}\left(\hat{f}_{i}\hat{f}_{i}^{j,k,l,i^{\prime}}\right)^{-1}\eta_{i}u_{i^{\prime}}\left(r_{j}-r_{l}\right)L_{nik}L_{njl}K_{nij}\psi_{ij}\\
 &  & +\dfrac{1}{\left(n-1\right)n^{\left(4\right)}}\sum_{a}\sum_{i^{\prime}\neq i}f_{i}^{-1}\eta_{i}\left(r_{i}-r_{i^{\prime}}\right)\left(r_{j}-r_{l}\right)L_{nik}L_{njl}K_{nij}\psi_{ij}\\
 &  & +\dfrac{1}{n^{\left(5\right)}}\sum_{a}\Delta\left(f_{i}^{j,k,l,i^{\prime}}\right)^{-1}\eta_{i}\left(r_{i}-r_{i^{\prime}}\right)\left(r_{j}-r_{l}\right)L_{nik}L_{njl}K_{nij}\psi_{ij}\\
 &  & +\dfrac{1}{n^{\left(5\right)}}\sum_{a}\left(\hat{f}_{i}\hat{f}_{i}^{j,k,l,i^{\prime}}\right)^{-1}\eta_{i}\left(r_{i}-r_{i^{\prime}}\right)\left(r_{j}-r_{l}\right)L_{nik}L_{njl}K_{nij}\psi_{ij}\\
 & = & I_{2,1,1}^{*}+I_{2,1,2}^{*}+I_{2,1,3}^{*}+I_{2,1,4}^{*}+I_{2,1,5}^{*}+I_{2,1,6}^{*}+I_{2,1,7}^{*}+I_{2,1,8}^{*}
\end{eqnarray*}
Handling one problem at a time, let us notice that $I_{2,1,1}^{*}$
is a zero-mean $U-$statistic of order three with kernel $H_{n}\left(Z_{i}^{*},Z_{j}^{*},Z_{l}^{*}\right)=\eta_{i}u_{i}f_{i}\left(r_{j}-r_{l}\right)L_{njl}K_{nij}\psi_{ij}$ where $Z_{i}^{*}=\left(Y_{i},W_{i},X_{i},\eta_{i}\right)$. Using the Hoeffding decomposition of $I_{2,1,1}^{*}$ in degenerate $U-$statistics, it is easy to check that the third and second order projections are small. For the first order degenerate $U-$statistic it suffices to note that
$\mathbb{E}\left[H_{n}\mid Z_{j}^{*}\right]=\mathbb{E}\left[H_{n}\mid Z_{l}^{*}\right]=0$
and $\mathbb{E}\left[H_{n}\mid Z_{i}^{*}\right]=\eta_{i}u_{i}f_{i}\mathbb{E}\left[\left(r_{j}-r_{l}\right)L_{njl}K_{nij}\psi_{ij}\mid Z_{i}\right]$
so that
\begin{eqnarray*}
\mathbb{E}\left[\mathbb{E}^{2}\left[H_{n}\mid Z_{i}^{*}\right]\right] & = & \mathbb{E}\left[\eta_{i}^{2}u_{i}^{2}f_{i}^{2}\mathbb{E}^{2}\left[\left(r_{j}-r_{l}\right)L_{njl}K_{nij}\psi_{ij}\mid Z_{i}\right]\right]\\
 & = & \mathbb{E}\left[u_{i}^{2}f_{i}^{2}\mathbb{E}^{2}\left[\left(r_{j}-r_{l}\right)L_{njl}K_{nij}\psi_{ij}\mid Z_{i}\right]\right]
\end{eqnarray*}
which, given that $\left\Vert \psi\right\Vert _{\infty}<\infty,$  is similar to the term $\xi_1$ bounded in the proof of Proposition 5 of
Lavergne et Vuong (2000).

\bigskip

Finally, let us briefly consider the case $I_n^{*}=\tilde{I}_n.$ Like
in the decomposition (\ref{ilvifl}), we have
\[
n\left(n-1\right)^{3}I_{n,FL}^{*}=n^{\left(4\right)}I_{n,LV}^{*}+n^{\left(3\right)}V_{1n}^{*}+2n^{\left(3\right)}V_{2n}^{*}-n^{\left(2\right)}V_{3n}^{*}
\]
where $\forall j\in\left\{ 1,2,3\right\} $, the $V_{jn}^{*}$s are
obtained by replacing the $Y_{i}$s by the $Y_{i}^{*}$s in the
$V_{jn}$s. All these terms could be handled by arguments similar to
the ones detailed above.  The proof of Theorem \ref{Bootstrap
  Consistency} is now complete.

\small
\bibliographystyle{ecta}
\bibliography{Misspecification}
\normalsize

\vfill\eject

\section*{Appendix (not for publication)}
 We here provide proofs of technical lemmas and additional details for
 the proofs in the manuscript. We define $Z_{i} = \left( Y_{i}, W_{i},
 X_{i}\right)$, $\|\psi\|_\infty=\sup_{x\in\mathbb{R}^q} |\psi(x)|$,
$$
\mathbf{K}_{nij} = |K_{nij}| = \frac{1}{h^p}
 \left|K\left( \frac{W_i - W_j}{h}\right)\right|,
\qquad \mbox{and}\qquad
 \mathbf{L}_{nij} = |L_{nij}| = \frac{1}{g^p} \left|L\left( \frac{W_i
   - W_j}{g}\right)\right|
\, .
$$

\begin{proof}[Proof of  Lemma \ref{AAn}]
1.
We have
$$
\mathbb{E}\left[A_{n}\right] = \dfrac{4h^{p}}{\left(n-1\right)^{2}}\sum_{i=2}^{n}\sum_{j=1}^{i-1}\mathbb{E}\left[\sigma_{i}^{2}f_{i}^{2}\sigma_{j}^{2}f_{j}^{2}K_{nij}^{2}\psi_{ij}^{2}\right]
 = \dfrac{2n h^{p}}{n-1} \mathbb{E}\left[\sigma_{i}^{2}f_{i}^{2}\sigma_{j}^{2}f_{j}^{2}K_{nij}^{2}\psi_{ij}^{2}\right],
$$
and
{}
\begin{eqnarray*}
\mbox{Var}\left[A_{n}\right] & \leq & \dfrac{64h^{2p}\left\Vert \psi\right\Vert _{\infty}^{4}}{\left(n-1\right)^{4}}\sum_{i=3}^{n}\sum_{j=2}^{i-1}\sum_{j^{\prime}=1}^{j-1}\mathbb{E}\left[\sigma_{i}^{4}f_{i}^{4}\sigma_{j}^{2}f_{j}^{2}\sigma_{j^{\prime}}^{2}f_{j^{\prime}}^{2}K_{nij}^{2}K_{nij^{\prime}}^{2}\right]\\
 &  & +\dfrac{32h^{2p}\left\Vert \psi\right\Vert _{\infty}^{4}}{\left(n-1\right)^{4}}\sum_{i=3}^{n}\sum_{i^{\prime}=1}^{i-1}\sum_{j=2}^{i^{\prime}-1}\mathbb{E}\left[\sigma_{i}^{2}f_{i}^{2}\sigma_{i^{\prime}}^{2}f_{i^{\prime}}^{2}u_{j}^{4}f_{j}^{4}K_{nij}^{2}K_{ni^{\prime}j}^{2}\right]\\
 &  & +\dfrac{16h^{2p}\left\Vert \psi\right\Vert _{\infty}^{4}}{\left(n-1\right)^{4}}\sum_{i=2}^{n}\sum_{j=1}^{i-1}\mathbb{E}\left[\sigma_{i}^{4}f_{i}^{4}u_{j}^{4}f_{j}^{4}K_{nij}^{4}\right]\\
 & = & O\left(n^{-1}\right)\mathbb{E}\left[\sigma_{i}^{4}f_{i}^{4}\sigma_{j}^{2}f_{j}^{2}\sigma_{k}^{2}f_{k}^{2}\mathbf{K}_{nij}\mathbf{K}_{nik}\right]
 +O\left(n^{-1}\right)\mathbb{E}\left[\sigma_{i}^{2}f_{i}^{2}\sigma_{i^{\prime}}^{2}f_{i^{\prime}}^{2}u_{j}^{4}f_{j}^{4}\mathbf{K}_{nij}\mathbf{K}_{ni^{\prime}j}\right] \\ &&+O\left(n^{-2}h^{-p}\right)\mathbb{E}\left[\sigma_{i}^{4}f_{i}^{4}u_{j}^{4}f_{j}^{4}\mathbf{K}_{nij}\right]\\
 & = & O\left(n^{-1}\right)+O\left(n^{-2}h^{-p}\right).
\end{eqnarray*}
Deduce that $\mbox{Var}\left[A_{n}\right]\rightarrow 0,$ and hence remains to show that $\mathbb{E}[A_n]\rightarrow \omega ^2.$ We have
$$
 h^{p}\;\mathbb{E}\left[\sigma_{i}^{2}f_{i}^{2}\sigma_{j}^{2}f_{j}^{2}K_{nij}^{2}\psi_{ij}^{2}\right]\\
 =  \mathbb{E}\left[ \int
 \varphi_{X_{i}}\left(t\right)\varphi_{X_{j}}\left(-t\right)\mathcal{F}\left[K^{2}\right]\left(ht\right)\psi^{2}\left(X_{i}-X_{j}\right)dt\right]
$$
where $\varphi_{x}\left(t\right)=\mathcal{F}\left[\sigma^{2}\left(\cdot,x\right)f^{2}\left(\cdot\right)\pi\left(\cdot\mid x\right)\right]\left(t\right)$.
Let us note that
\begin{eqnarray*}
\mathbb{E}\left[ \int
\left| \varphi_{X_{i}}\left(t\right)\varphi_{X_{j}}\left(-t\right)\right|
 \psi^{2}\left(X_{i}-X_{j}\right)dt\right] &\leq &\|\psi\|_\infty \; \mathbb{E}\left[ \int
\left| \varphi_{X}\left(t\right)\right|^2
 dt\right] \\
  & = & \|\psi\|_\infty \;\mathbb{E}\left[\sigma^{4}\left(W,X\right)f^{4}\left(W\right)\pi\left(W\mid X\right)\right],
\end{eqnarray*}
by Plancherel Theorem. Moreover,
$\mathcal{F}\left[K^{2}\right]\left(ht\right)$ is bounded and
converges pointwise to $\int K^{2}\left(s\right)ds$ as $h\to0$. Then
by Lebesgue's dominated convergence theorem,
\[
 h^{p}\;
  \mathbb{E}\left[\sigma_{i}^{2}f_{i}^{2}\sigma_{j}^{2}f_{j}^{2}K_{nij}^{2}
    \psi_{ij}^{2}\right]
\to \mathbb{E}\left[\int
    \varphi_{X_{i}}\left(t\right)\varphi_{X_{j}}\left(-t\right)
    \psi^{2}\left(X_{i}-X_{j}\right)dt \right] \int
  K^{2}\left(s\right)\, ds =
 \omega^2
\, ,
\]
by Parseval's Theorem.

\medskip
2. By elementary calculations,
\begin{eqnarray*}
\mathbb{E}\left[B_{n}^{2}\right] & = & \dfrac{64h^{2p}}{\left(n-1\right)^{4}}\sum_{i=3}^{n}\sum_{i^{\prime}=3}^{n}\sum_{j=2}^{i-1}
\sum_{j^{\prime}=2}^{i^{\prime}-1}\sum_{k=1}^{j-1}\sum_{k^{\prime}=1}^{j^{\prime}-1}
\mathbb{E}\left[\sigma_{i}^{2}f_{i}^{2}\sigma_{i^{\prime}}^{2}f_{i^{\prime}}^{2}u_{j}
f_{j}u_{j^{\prime}}f_{j^{\prime}}u_{k}f_{k}u_{k^{\prime}}f_{k^{\prime}}\right.\\
 &  & \left.\times K_{nij}K_{ni^{\prime}j^{\prime}}K_{nik}K_{ni^{\prime}k^{\prime}}\psi_{ij}
 \psi_{i^{\prime}j^{\prime}}\psi_{ik}\psi_{i^{\prime}k^{\prime}}\right]\\
 & \leq & \dfrac{64h^{2p}\left\Vert \psi\right\Vert _{\infty}^{4}}{\left(n-1\right)^{4}}\sum_{i=3}^{n}\sum_{i^{\prime}=3}^{n}
 \sum_{j=2}^{i\wedge i^{\prime}-1}\sum_{k=1}^{j-1}\mathbb{E}\left[\sigma_{i}^{2}f_{i}^{2}\sigma_{i^{\prime}}^{2}f_{i^{\prime}}^{2}\sigma_{j}^{2}f_{j}^{2}\sigma_{k}^{2}f_{k}^{2}K_{nij}K_{ni^{\prime}j}K_{nik}K_{ni^{\prime}k}\right]\\
 & = & \dfrac{64h^{2p}\left\Vert \psi\right\Vert _{\infty}^{4}}{\left(n-1\right)^{4}}\sum_{i=3}^{n}\sum_{j=2}^{i-1}\sum_{k=1}^{j-1}\mathbb{E}\left[\sigma_{i}^{4}f_{i}^{4}\sigma_{j}^{2}f_{j}^{2}\sigma_{k}^{2}f_{k}^{2}K_{nij}^{2}K_{nik}^{2}\right]\\
 &  & +\dfrac{128h^{2p}\left\Vert \psi\right\Vert _{\infty}^{4}}{\left(n-1\right)^{4}}\sum_{i=3}^{n}\sum_{i^{\prime}=3}^{i-1}
 \sum_{j=2}^{i^{\prime}-1}\sum_{k=1}^{j-1}\mathbb{E}\left[\sigma_{i}^{2}f_{i}^{2}\sigma_{i^{\prime}}^{2}f_{i^{\prime}}^{2}\sigma_{j}^{2}f_{j}^{2}\sigma_{k}^{2}f_{k}^{2}K_{nij}K_{ni^{\prime}j}K_{nik}K_{ni^{\prime}k}\right]\\
 & = & O\left(n^{-1}\right)\mathbb{E}\left[\sigma_{i}^{4}f_{i}^{4}\sigma_{j}^{2}f_{j}^{2}\sigma_{k}^{2}f_{k}^{2}\mathbf{K}_{nij}\mathbf{K}_{nik}\right] +O\left(h^{p}\right)\mathbb{E}\left[\sigma_{i}^{2}f_{i}^{2}\sigma_{i^{\prime}}^{2}f_{i^{\prime}}^{2}\sigma_{j}^{2}f_{j}^{2}\sigma_{k}^{2}f_{k}^{2}\mathbf{K}_{nij}\mathbf{K}_{ni^{\prime}j}\mathbf{K}_{nik}\right]\\
 & = & O\left(n^{-1}\right)+O\left(h^{p}\right) = o(1)\, .
\end{eqnarray*}


\medskip
3. We have $\forall\varepsilon>0$, $\forall n\geq1$, and $1< i\leq n$,
\begin{eqnarray*}
\mathbb{E}\left[G_{n,i}^{2}I\left(\left|G_{n,i}\right|>\varepsilon\right)
\mid\mathcal{F}_{n,i-1}\right]
  &\leq &
  \mathbb{E}^{1/2}\left[G_{n,i}^{4}\mid\mathcal{F}_{n,i-1}\right]\mathbb{E}^{1/2}
\left[I\left(\left|G_{n,i}\right|>\varepsilon\right)\mid\mathcal{F}_{n,i-1}\right]
\\
& \leq &
\frac{\mathbb{E}\left[G_{n,i}^{4}\mid\mathcal{F}_{n,i-1}\right]}{\varepsilon^{2}}
\, .
\end{eqnarray*}
Then
\begin{eqnarray*}
 \sum_{i=2}^{n}\mathbb{E}\left[G_{n,i}^{2}I\left(\left|G_{n,i}\right|>\varepsilon\right)\mid\mathcal{F}_{n,i-1}\right]
&\leq & \dfrac{1}{\varepsilon^{2}}\sum_{i=2}^{n}\mathbb{E}\left[G_{n,i}^{4}\mid\mathcal{F}_{n,i-1}\right]\\
&\leq & \dfrac{1}{\varepsilon^{2}}\dfrac{16h^{2p}}{\left(n-1\right)^{4}}\sum_{i=2}^{n}\mathbb{E}\left[u_{i}^{4}f_{i}^{4}\mid W_{i},\, X_{i}\right]\left(\sum_{j=1}^{i-1}u_{j}K_{nij}\psi_{ij}\right)^{4}\\
&\leq & \dfrac{1}{\varepsilon^{2}}\dfrac{16\kappa_{4}h^{2p}}{\left(n-1\right)^{4}}\sum_{i=2}^{n}\left(\sum_{j=1}^{i-1}u_{j}K_{nij}\psi_{ij}\right)^{4},
\end{eqnarray*}
where $\kappa_{4}$ is any constant that bounds $\mathbb{E}\left[u^{4}f^{4}\mid W,\, X\right].$ The last expression that multiplies $\varepsilon^{-2}$ is positive and has expectation
\begin{align*}
 & \!\!\!\!\! \dfrac{16\kappa_{4}h^{2p}}{\left(n-1\right)^{4}}\sum_{i=2}^{n}\sum_{j_{1}=1}^{i-1}\sum_{j_{2}=1}^{i-1}\sum_{j_{3}=1}^{i-1}\sum_{j_{4}=1}^{i-1}\mathbb{E}\left[u_{j_{1}}f_{j_{1}}u_{j_{2}}f_{j_{2}}u_{j_{3}}j_{j_{3}}u_{j_{4}}f_{j_{4}}\vphantom{K_{nij_{1}}K_{nij_{2}}K_{nij_{3}}K_{nij_{4}}\psi_{ij_{1}}\psi_{ij_{2}}\psi_{ij_{3}}\psi_{ij_{4}}}\right.\\
 & \hphantom{\dfrac{16\kappa_{4}h^{2p}}{\left(n-1\right)^{4}}\sum_{i=2}^{n}\sum_{j_{1}=1}^{i-1}\sum_{j_{2}=1}^{i-1}\sum_{j_{3}=1}^{i-1}\sum_{j_{4}=1}^{i-1}\mathbb{E}\left[\right.}\left.\times\vphantom{u_{j_{1}}f_{j_{1}}u_{j_{2}}f_{j_{2}}u_{j_{3}}j_{j_{3}}u_{j_{4}}f_{j_{4}}}K_{nij_{1}}K_{nij_{2}}K_{nij_{3}}K_{nij_{4}}\psi_{ij_{1}}\psi_{ij_{2}}\psi_{ij_{3}}\psi_{ij_{4}}\right]\\
= & \;\;\dfrac{96\kappa_{4}h^{2p}}{\left(n-1\right)^{4}}\sum_{i=3}^{n}\sum_{j=1}^{i-1}\sum_{k=1}^{j-1}\mathbb{E}\left[u_{j}^{2}f_{j}^{2}u_{k}^{2}f_{k}^{2}K_{nij}^{2}K_{nik}^{2}\psi_{ij}^{2}\psi_{ik}^{2}\right]\\
 & +\dfrac{16\kappa_{4}h^{2p}}{\left(n-1\right)^{4}}\sum_{i=2}^{n}\sum_{j=1}^{i-1}\mathbb{E}\left[u_{j}^{4}f_{j}^{4}K_{nij}^{4}\psi_{ij}^{4}\right]\\
= & \;\;O\left(n^{-1}\right)\mathbb{E}\left[u_{j}^{2}f_{j}^{2}u_{k}^{2}f_{k}^{2}\mathbf{K}_{nij}\mathbf{K}_{nik}\right]+O\left(n^{-2}h^{-p}\right)\mathbb{E}\left[u_{j}^{4}f_{j}^{4}\mathbf{K}_{nij}\right]\\
= & \;\;O\left(n^{-1}\right)+O\left(n^{-2}h^{-p}\right).
\end{align*}
The desired result follows.
\end{proof}


The following result, known as Bochner's Lemma (see Theorem 1.1.1. of
\cite{Bochner1955}) will be repeatedly use in the following. We recall
it for the sake of completeness.
\begin{lem}
\label{Bochner}For any function $l\left(\cdot\right)\in{\cal U}^{p}$
and any integrable kernel $K\left(\cdot\right)$,
\[
\sup_{x\in\mathbb{R}^{p}}\left|\int l\left(y\right)\frac{1}{h^{p}}K\left(\frac{x-y}{h}\right)\, dy-l\left(x\right)\int K\left(u\right)\, du\right|\rightarrow0.
\]
\end{lem}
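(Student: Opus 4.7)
The plan is to prove this classical statement by a standard change-of-variables argument combined with a truncation that exploits uniform continuity in the bulk and integrability of $K$ in the tails, taking care that every estimate is uniform in $x$.

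First, I would apply the substitution $u=(x-y)/h$, so that $dy = h^{p}\,du$ and the convolution becomes
\[
\int l(y)\,\frac{1}{h^{p}}K\!\left(\frac{x-y}{h}\right)dy = \int l(x-hu)\,K(u)\,du.
\]
Subtracting $l(x)\int K(u)\,du$, the quantity whose supremum we must control reduces to $\int [l(x-hu)-l(x)]\,K(u)\,du$, and by the triangle inequality its absolute value is at most $\int |l(x-hu)-l(x)|\,|K(u)|\,du$.

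Next, I would split this integral according to a radius $\delta/h$. Since $l\in\mathcal{U}^{p}$, the function $l$ is uniformly continuous on $\mathbb{R}^{p}$, so for any $\varepsilon>0$ there is $\delta>0$ such that $|l(x-t)-l(x)|<\varepsilon$ whenever $|t|<\delta$, uniformly in $x\in\mathbb{R}^{p}$. On the region $\{u:h|u|<\delta\}$, the integrand is therefore bounded by $\varepsilon|K(u)|$, giving a contribution at most $\varepsilon\,\|K\|_{1}$. On the complementary region $\{u:|u|\geq\delta/h\}$, I would use the fact (noted in the paper after Definition \ref{RegulDef}) that any $l\in\mathcal{U}^{p}$ is bounded, so that $|l(x-hu)-l(x)|\leq 2\|l\|_{\infty}$ uniformly in $x$, and the remaining integral $\int_{|u|\geq\delta/h}|K(u)|\,du$ tends to $0$ as $h\to 0$ by integrability of $K$ and dominated convergence. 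Combining the two bounds yields a $\limsup_{h\to 0}$ of $\sup_{x}|\cdot|$ bounded by $\varepsilon\|K\|_{1}$, and letting $\varepsilon\to 0$ concludes the proof.

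The only delicate point is maintaining uniformity in $x$ throughout, which is precisely why the hypothesis $l\in\mathcal{U}^{p}$ (uniform continuity, not merely continuity) is used: without it, the choice of $\delta$ would depend on $x$ and the bulk estimate would fail to be uniform. The boundedness of $l$ and the integrability of $K$ then make the tail contribution vanish uniformly, which is the routine but essential ingredient.
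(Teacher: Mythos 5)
Your proof is correct. The paper itself does not prove this lemma---it simply recalls the statement and cites Theorem 1.1.1 of \cite{Bochner1955}---and your argument (change of variables, then splitting at radius $\delta/h$ to use uniform continuity of $l$ in the bulk and boundedness of $l$ together with integrability of $K$ in the tail) is precisely the standard proof of that cited result, with the uniformity in $x$ handled correctly.
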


\medskip

In the following we provide the proofs for rates for the remaining
terms in the decomposition of $I_n$, see Propositions \ref{Ustat} and
\ref{Remaining}.  For this purpose, we use the following a
decomposition for $U-$statistics that can be found in
\cite{Lavergne2000}: if
$U_{n}=\left(1/n^{\left(m\right)}\right)\sum_{a}H_{n}\left(Z_{i_{1}},\,\dots,\,
Z_{i_{m}}\right)$, then
$$
\mathbb{E}\left[U_{n}^{2}\right] = \left(\dfrac{1}{n^{\left(m\right)}}\right)^{2}\sum_{c=0}^{m}\dfrac{n^{\left(2m-c\right)}}{c!}\sum_{\left|\Delta_{1}\right|=c=\left|\Delta_{2}\right|}^{\left(c\right)}I\left(\Delta_{1},\Delta_{2}\right)= \sum_{c=0}^{m}O\left(n^{-c}\right)\sum_{\left|\Delta_{1}\right|=c=\left|\Delta_{2}\right|}^{\left(c\right)}I\left(\Delta_{1},\Delta_{2}\right),
$$
where $\sum^{\left(c\right)}$ denotes summation over sets $\Delta_{1}$
and $\Delta_{1}$ of ordered positions of length $c$,
\[
I\left(\Delta_{1},\Delta_{2}\right)=\mathbb{E}\left[H_{n}\left(Z_{i_{1}},\,\dots,\, Z_{i_{m}}\right)H_{n}\left(Z_{j_{1}},\,\dots,\, Z_{j_{m}}\right)\right]
\]
and the $i$'s position in $\Delta_{1}$ coincide with the $j$'s
position in $\Delta_{2}$ and are pairwise distinct otherwise. Now,
we will bound $\mathbb{E}\left[U_{n}^{2}\right]$ using the $\xi_{c}=\sum^{\left(c\right)}I\left(\Delta_{1},\Delta_{2}\right)$
and the fact that by Cauchy's inequality,
\begin{eqnarray*}
I^{2}\left(\Delta_{1},\Delta_{2}\right) & = & \mathbb{E}^{2}\left[\mathbb{E}\left[H_{n}\left(Z_{i_{1}},\,\dots,\, Z_{i_{m}}\right)\mid Z_{c}\right]\mathbb{E}\left[H_{n}\left(Z_{j_{1}},\,\dots,\, Z_{j_{m}}\right)\mid Z_{c}\right]\right]\\
 & \leq & \mathbb{E}\left[\mathbb{E}^{2}\left[H_{n}\left(Z_{i_{1}},\,\dots,\, Z_{i_{m}}\right)\mid Z_{c}\right]\right]\mathbb{E}\left[\mathbb{E}^{2}\left[H_{n}\left(Z_{j_{1}},\,\dots,\, Z_{j_{m}}\right)\mid Z_{c}\right]\right]
\end{eqnarray*}
where $Z_{c}$ denotes the common $Z_{i}$'s.

\begin{proof}[Proof of Proposition \ref{Ustat}]
After bounding the $\psi_{ij}$'s by $\left\Vert \psi\right\Vert _{\infty}$
the arguments are very similar to those used in \cite{Lavergne2000}.
We prove only the first statement.
\begin{description}
\item [{(i)}] $I_{1,3}$ is a U-statistic with kernel $H_{n}\left(Z_{i},Z_{j},Z_{l}\right)=u_{i}f_{i}u_{l}L_{njl}K_{nij}\psi_{ij}.$
We need to bound the $\xi_{c}$, $c=0,1,2,3$.

\begin{enumerate}
\item $\mathbb{E}\left[H_{n}\right]=0,$ thus $\xi_{0}=0$.
\item $\xi_{1}=O\left(\delta_{n}^{2}\right)$. Indeed,
$\mathbb{E}\left[H_{n}\mid Z_{l}\right]=\delta_{n}u_{l}\mathbb{E}\left[d_{i}f_{i}L_{njl}K_{nij}\psi_{ij}\mid Z_{l}\right]$ and $\mathbb{E}\left[H_{n}\mid Z_{i}\right]=0=\mathbb{E}\left[H_{n}\mid Z_{j}\right].$
Then
\begin{eqnarray*}
\mathbb{E}\left[\mathbb{E}^{2}\left[H_{n}\mid Z_{l}\right]\right] & \leq & \left\Vert \psi\right\Vert _{\infty}^{2}\delta_{n}^{2}\mathbb{E}\left[u_{l}^{2}\mathbb{E}^{2}\left[d_{i}f_{i}L_{njl}K_{nij}\mid Z_{l}\right]\right]\\
 & = & O\left(\delta_{n}^{2}\right)\mathbb{E}\left[u_{l}^{2}\mathbb{E}^{2}\left[L_{njl}d_{j}f_{j}^{2}\mid Z_{l}\right]\right]=O\left(\delta_{n}^{2}\right).
\end{eqnarray*}

\item $\xi_{2}=O\left(g^{-p}\right)$. Indeed, we have
\begin{eqnarray*}
\mathbb{E}\left[H_{n}\mid Z_{i},Z_{j}\right] & = & u_{i}f_{i}K_{nij}\psi_{ij}\mathbb{E}\left[u_{l}L_{njl}\mid Z_{j}\right]=0,\\
\mathbb{E}\left[H_{n}\mid Z_{i},Z_{l}\right] & = & u_{i}f_{i}u_{l}\mathbb{E}\left[L_{njl}K_{nij}\psi_{ij}\mid Z_{i},Z_{l}\right],\\
\mathbb{E}\left[H_{n}\mid Z_{j},Z_{l}\right] & = & u_{l}L_{njl}\mathbb{E}\left[u_{i}f_{i}K_{nij}\psi_{ij}\mid Z_{j}\right]=\delta_{n}u_{l}L_{njl}\mathbb{E}\left[d_{i}f_{i}K_{nij}\psi_{ij}\mid Z_{j}\right].
\end{eqnarray*}
By successive applications of Lemma \ref{Bochner},
\begin{eqnarray*}
\mathbb{E}\left[\mathbb{E}^{2}\left(H_{n}\mid Z_{i},Z_{l}\right)\right] & \leq & \left\Vert \psi\right\Vert _{\infty}^{2}\mathbb{E}\left[u_{i}^{2}f_{i}^{2}u_{l}^{2}\mathbb{E}\left[L_{njl}K_{nij}\mid Z_{i},Z_{l}\right]\mathbb{E}\left[L_{nj^{\prime}l}K_{nij^{\prime}}\mid Z_{i},Z_{l}\right]\right]\\
 & = & O\left(g^{-p}\right)\mathbb{E}\left[u_{i}^{2}f_{i}^{2}u_{l}^{2}\mathbb{E}\left[\mathbf{L}_{njl}\mathbf{K}_{nij}\mid Z_{i},Z_{l}\right]\mathbb{E}\left[\mathbf{K}_{nij^{\prime}}\mid Z_{i},Z_{l}\right]\right]\\
 & = & O\left(g^{-p}\right)\mathbb{E}\left[u_{i}^{2}f_{i}^{3}u_{l}^{2}\mathbf{L}_{njl}\mathbf{K}_{nij}\right]=O\left(g^{-p}\right),\\
\mathbb{E}\left[\mathbb{E}^{2}\left[H_{n}\mid Z_{j},Z_{l}\right]\right] & \leq & \left\Vert \psi\right\Vert _{\infty}^{2}\delta_{n}^{2}\mathbb{E}\left[u_{l}^{2}L_{njl}^{2}\mathbb{E}^{2}\left[d_{i}f_{i}K_{nij}\mid Z_{j}\right]\right]\\
 & \leq & O\left(\delta_{n}^{2}\right)\mathbb{E}\left[u_{l}^{2}L_{njl}^{2}d_{j}^{2}f_{j}^{4}\right]\\
 & = & O\left(\delta_{n}^{2}\right) O\left(g^{-p}\right)\mathbb{E}\left[u_{l}^{2}\mathbf{L}_{njl}d_{j}^{2}f_{j}^{4}\right]
 =O\left(g^{-p}\right).
\end{eqnarray*}

\item $\xi_{3}=O\left(g^{-p}h^{-p}\right)$, as $\mathbb{E}\left[H_{n}^{2}\right]$
equals
\[
\mathbb{E}\left[u_{i}^{2}u_{l}^{2}f_{i}^{2}L_{njl}^{2}K_{nij}^{2}\psi_{ij}^{2}\right]=O\left(g^{-p}h^{-p}\right)\mathbb{E}\left[u_{i}^{2}u_{l}^{2}f_{i}^{2}\mathbf{L}_{njl}\mathbf{K}_{nij}\right]=O\left(g^{-p}h^{-p}\right).
\]

\end{enumerate}
\end{description}
Collecting results, $\mathbb{E}\left[\left(nh^{p/2}I_{1,3}\right)^{2}\right]=
O\left(\delta_{n}^{2}nh^{p}\right)+
O\left(h^{p}/g^{p}\right)+
O\left(n^{-1}g^{-p}\right)=o(1)$.
\end{proof}

\begin{proof}[Proof of Proposition \ref{Remaining}]
As in Proposition \ref{Ustat}, we only prove the first statement.
We will use the following lemma, which is similar to
Lemma 2 of \cite{Lavergne2000}, and whose proof is then omitted.
\begin{lem}
\label{Lambda}
Let  $\Delta f_{i}^{j}= \widehat{f}_{i}^{j}-f_{i}.$
If  $f\left(\cdot\right)\in{\cal U}^{p}$ and $ng^{p}\rightarrow\infty$,
$\mathbb{E}\left[\Delta^{2}f_{i}^{j}\mid Z_{i},Z_{j},Z_{i'},Z_{j'}\right]=o\left(1\right)$
and $E\left[\Delta^{2}f_{i}^{j,l}\mid Z_{i},Z_{j},Z_{l},Z_{i'},Z_{j'},Z_{l'}\right]=o\left(1\right)$
uniformly in the indices.
\end{lem}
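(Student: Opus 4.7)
My approach is to decompose, for $\mathcal{C} = \{Z_i, Z_j, Z_{i'}, Z_{j'}\}$,
\[
\mathbb{E}\bigl[\Delta^2 f_i^j \mid \mathcal{C}\bigr] = \operatorname{Var}\bigl[\widehat{f}_i^j \mid \mathcal{C}\bigr] + \bigl(\mathbb{E}[\widehat{f}_i^j\mid\mathcal{C}] - f_i\bigr)^2,
\]
and show each piece is bounded by a deterministic $o(1)$ quantity independent of the indices. Since $\widehat{f}_i^j = (n-2)^{-1}\sum_{k \neq i,j} L_{nik}$, I would split the summation into $k \in \{i',j'\}$, for which $L_{nik}$ is $\mathcal{C}$-measurable and hence frozen, and $k \notin \{i,j,i',j'\}$, for which $L_{nik}$ depends only on $W_k$ and is independent of $\mathcal{C}$ conditionally on $W_i$.

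For the conditional bias this yields
\[
\mathbb{E}[\widehat{f}_i^j\mid\mathcal{C}] - f_i = \tfrac{n-4}{n-2}\bigl[\mathbb{E}(L_{nik}\mid W_i) - f_i\bigr] + \tfrac{1}{n-2}\bigl[L_{nii'} + L_{nij'} - 2 f_i\bigr].
\]
Bochner's Lemma (Lemma \ref{Bochner}), applied with $f \in \mathcal{U}^p$, gives that the leading bracket is $o(1)$ uniformly in $W_i$. The remaining contamination pieces are each $O((ng^p)^{-1}) = o(1)$ uniformly, since $L$ and $f$ are bounded and $ng^p \to \infty$. Squaring via $(a+b)^2 \le 2a^2 + 2b^2$ then delivers the desired uniform bound on the squared bias. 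For the conditional variance only the independent summands contribute, so
\[
\operatorname{Var}[\widehat{f}_i^j\mid\mathcal{C}] = \tfrac{n-4}{(n-2)^2}\operatorname{Var}(L_{nik}\mid W_i) \le \tfrac{1}{n-2}\mathbb{E}[L_{nik}^2\mid W_i].
\]
A change of variable gives $\mathbb{E}[L_{nik}^2\mid W_i] = g^{-p}\int L^2(u) f(W_i - gu)\, du = O(g^{-p})$ uniformly in $W_i$, so the conditional variance is $O((ng^p)^{-1}) = o(1)$ uniformly.

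The second statement, with the enlarged conditioning set $\{Z_i,Z_j,Z_l,Z_{i'},Z_{j'},Z_{l'}\}$, is proved by the same argument applied to $\widehat{f}_i^{j,l} = (n-3)^{-1}\sum_{k\neq i,j,l}L_{nik}$: one peels off three $\mathcal{C}$-measurable summands $L_{nii'}, L_{nij'}, L_{nil'}$ instead of two, each contributing at most $O((ng^p)^{-1})$ to the bias, while Bochner's Lemma and the variance bound are unchanged. The main obstacle is not analytic but bookkeeping: correctly identifying which indices $k$ are frozen by the conditioning so as to isolate a genuinely independent sum on which Bochner's Lemma and the usual variance bound apply; once that separation is made, uniformity in the indices follows because every bound is deterministic and depends only on $n$ and $g$.
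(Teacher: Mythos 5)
Your proof is correct. The paper itself omits the argument for Lemma \ref{Lambda}, deferring to Lemma 2 of Lavergne and Vuong (2000), and what you supply is exactly the intended standard argument: a conditional bias--variance decomposition, separation of the $\mathcal{C}$-measurable summands $L_{nii'},L_{nij'}$ (each contributing $O((ng^p)^{-1})$ since $L$, being of bounded variation with compact support, is bounded) from the $n-4$ conditionally i.i.d.\ ones, Bochner's Lemma for the uniform bias, and $\mathbb{E}[L_{nik}^2\mid W_i]=O(g^{-p})$ for the variance. The only detail worth a sentence in a written-up version is the degenerate index configurations (e.g.\ $i'\in\{i,j\}$), where the count of frozen summands changes but every bound only improves, so uniformity over all index patterns indeed holds.
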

\begin{description}
\item [{(i)}] Let us  denote $\Delta f_{i}^{j}= \widehat{f}_{i}^{j}-f_{i}.$
We have $I_{1,1}=\left(1/n^{\left(2\right)}\right)\sum_{a}u_{i}\Delta f_{i}^{j}u_{j}f_{j}K_{nij}\psi_{ij}$
so that
\begin{equation}\label{biz_biz}
\mathbb{E}\left[I_{1,1}^{2}\right]=\left(\frac{1}{n^{\left(2\right)}}\right)^{2}\left[\sum_{a}u_{i}\Delta f_{i}^{j}u_{j}f_{j}K_{nij}\psi_{ij}\right]\left[\sum_{a}u_{i^{\prime}}\Delta f_{i^{\prime}}^{j^{\prime}}u_{j^{\prime}}f_{j^{\prime}}K_{ni^{\prime}j^{\prime}}\psi_{i^{\prime}j^{\prime}}\right],
\end{equation}
where the first (respectively the second) sum is taken over all
arrangements of different indices $i$ and $j$ (respectively different
indices $i^{\prime}$ and $j^{\prime}$).  Let $\overline{W}$ denote the
sample of $W_{i},$ $1\leq i \leq n,$ and let
$\lambda_{n}=E\left[\Delta^{2}f_{i}^{j}\mid
  Z_{i},Z_{j},Z_{i^{\prime}},Z_{j^{\prime}}\right]$. By Lemma
\ref{Lambda}, $\lambda_n=o\left(1\right)$ uniformly in the indices. By
Equation (\ref{biz_biz}), $\mathbb{E}\left[I_{1,1}^{2}\right]$ is
equal to a normalized sum over four indices. This sum could split in
three sums of the following types.
\begin{enumerate}
\item All indices are different, that is a sum of $n^{\left(4\right)}$ terms. Each term in the sum can be bounded as follows:
\[
\begin{array}{cl}
 & \mathbb{E}\left[u_{i}\Delta f_{i}^{j}u_{j}f_{j}K_{nij}\psi_{ij}u_{i^{\prime}}\Delta f_{i^{\prime}}^{j^{\prime}}u_{j^{\prime}}f_{j^{\prime}}K_{ni^{\prime}j^{\prime}}\psi_{i^{\prime}j^{\prime}}\right]\\
\leq & \left\Vert \psi\right\Vert _{\infty}^{2}\delta_{n}^{4}\mathbb{E}\left[\Delta f_{i}^{j}f_{j}\Delta f_{i^{\prime}}^{j^{\prime}}f_{j^{\prime}}\mathbb{E}\left[d_{i}d_{j}d_{i'}d_{j^{\prime}}K_{nij}K_{ni^{\prime}j^{\prime}}\mid\overline{W}\right]\right]\\
\leq & \left\Vert \psi\right\Vert _{\infty}^{2}\delta_{n}^{4}\mathbb{E}\left[f_{j}f_{j^{\prime}}d_{i}d_{j}d_{i^{\prime}}d_{j^{\prime}}K_{nij}K_{ni^{\prime}j^{\prime}}\mathbb{E}\left[\Delta f_{i}^{j}\Delta f_{i^{\prime}}^{j^{\prime}}\mid Z_{i},Z_{j},Z_{i^{\prime}},Z_{j^{\prime}}\right]\right]\\
\leq & O(\delta_{n}^{4}\lambda_{n})\mathbb{E}\left|f_{j}f_{j^{\prime}}d_{i}d_{j}d_{i^{\prime}}d_{j^{\prime}}K_{nij}K_{ni^{\prime}j^{\prime}}\right|=O\left(\delta_{n}^{4}\lambda_{n}\right).
\end{array}
\]

\item One index is common to $\left\{ i,j\right\} $ and $\left\{ i^{\prime},j^{\prime}\right\} ,$
that is a sum of $4n^{\left(3\right)}$ terms.  For each of such terms we can write
\[
\begin{array}{ccl}
\left(i^{\prime}=i\right)\quad &  & \mathbb{E}\left[u_{i}^{2}\Delta f_{i}^{j}u_{j}f_{j}K_{nij}\psi_{ij}\Delta f_{i}^{j^{\prime}}u_{j^{\prime}}f_{j^{\prime}}K_{nij^{\prime}}\psi_{ij^{\prime}}\right]\\
 & \leq & \left\Vert \psi\right\Vert _{\infty}^{2}\delta_{n}^{2}\mathbb{E}\left[\Delta f_{i}^{j}f_{j}\Delta f_{i}^{j^{\prime}}f_{j^{\prime}}E\left[u_{i}^{2}d_{j}d_{j^{\prime}}K_{nij}K_{nij^{\prime}}\mid\overline{W}\right]\right]\\
 & \leq & O(\delta_{n}^{2}\lambda_{n})\mathbb{E}\left|f_{j}f_{j^{\prime}}u_{i}^{2}d_{j}d_{j^{\prime}}K_{nij}K_{nij^{\prime}}\right|=O\left(\delta_{n}^{2}\lambda_{n}\right),\\
\\
\left(j^{\prime}=j\right)\quad &  & \mathbb{E}\left[u_{i}\Delta f_{i}^{j}u_{j}^{2}f_{j}^{2}K_{nij}\psi_{ij}u_{i^{\prime}}\Delta f_{i^{\prime}}^{j}K_{ni^{\prime}j}\psi_{i^{\prime}j}\right]\\
 & \leq & \left\Vert \psi\right\Vert _{\infty}^{2}\delta_{n}^{2}\mathbb{E}\left[\Delta f_{i}^{j}f_{j}^{2}\Delta f_{i^{\prime}}^{j}\mathbb{E}\left[d_{i}u_{j}^{2}d_{i^{\prime}}K_{nij}K_{ni^{\prime}j}\mid\overline{W}\right]\right]\\
 & \leq & O(\delta_{n}^{2}\lambda_{n})\mathbb{E}\left|f_{j}^{2}d_{i}u_{j}^{2}d_{i'}K_{nij}K_{ni^{\prime}j}\right|=O\left(\delta_{n}^{2}\lambda_{n}\right),\\
\\
\left(i^{\prime}=j\right)\quad &  & \mathbb{E}\left[u_{i}\Delta f_{i}^{j}u_{j}^{2}f_{j}K_{nij}\psi_{ij}\Delta f_{j}^{j^{\prime}}u_{j^{\prime}}f_{j^{\prime}}K_{njj^{\prime}}\psi_{jj^{\prime}}\right]\\
 & \leq & \left\Vert \psi\right\Vert _{\infty}^{2}\delta_{n}^{2}\mathbb{E}\left[\Delta f_{i}^{j}f_{j}\Delta f_{j}^{j^{\prime}}f_{j^{\prime}}E\left[d_{i}u_{j}^{2}d_{j'}K_{nij}K_{njj^{\prime}}\mid\overline{W}\right]\right]\\
 & \leq & O(\delta_{n}^{2}\lambda_{n})\mathbb{E}\left|f_{j}f_{j^{\prime}}d_{i}u_{j}^{2}d_{j^{\prime}}K_{nij}K_{njj^{\prime}}\right|=O\left(\delta_{n}^{2}\lambda_{n}\right).
\end{array}
\]
The case $j^{\prime}=i$ is similar to $i^{\prime}=j$.
\item Two indices in common to $\left\{ i,j\right\} $ and $\left\{ i^{\prime},j^{\prime}\right\}, $
that is a sum of $2n^{\left(2\right)}$ terms. For each term in the sum we can write
\[
\mathbb{E}\left[u_{i}^{2}u_{j}^{2}\left(\Delta f_{i}^{j}\right)^{2}\!f_{j}^{2}K_{nij}^{2}\psi_{ij}^{2}\right]\!=O\!\left(\lambda_{n}h^{-p}\right)\;\mbox{ and }\;\mathbb{E}\left[u_{i}^{2}u_{j}^{2}\Delta f_{i}^{j}\Delta f_{j}^{i}f_{i}f_{j}K_{nij}^{2}\psi_{ij}^{2}\right]\!=O\!\left(\lambda_{n}h^{-p}\right).
\]
\end{enumerate}
\end{description}
Therefore, $
\mathbb{E}\left[\left(nh^{p/2}I_{1,1}\right)^{2}\right]=\delta_{n}^{4}n^{2}h^{p}O\left(\lambda_{n}\right)+\delta_{n}^{2}nh^{p}O\left(\lambda_{n}\right)+O\left(\lambda_{n}\right)
=O\left(\lambda_{n}\right) $.  The result then follows from Lemma
\ref{Lambda}.
\end{proof}


\begin{proof}[Proof of Lemma \ref{unif_omeg}]
We only prove the result for $\Delta \hat r_i \hat f_i, $ as the
reasoning is similar for $\Delta\hat f_i$. We have
\begin{eqnarray*}
\Delta \hat r_i \hat f_i &=& \frac{1}{(n-1)g^p} \sum_{k\neq i}
\left\{Y_k L\left( (W_i-W_k)g^{-1} \right) -\mathbb{E}\left[Y L\left(
  (W_i - W) g^{-1} \right)\right]\right\}\\ && + \mathbb{E}\left[r(W)
  g^{-p} L\left( (W_i-W) g^{-1} \right)\right] - r(W_i) f(W_i)\\ &=&
\Delta_{1i} + \Delta_{2i}.
\end{eqnarray*}
The uniform continuity of $r(\cdot)f(\cdot)$ implies
$\sup_i|\Delta_{2i}|=o_p(1)$ by Lemma \ref{Bochner}.  {For
  $\sup_i|\Delta_{1i}|$, we use empirical process tools. Let us
  introduce some notation. Let $\mathcal{G}$ be a class of functions
  of the observations with envelope function $G$ and let
$$ J(\delta,\mathcal{G}, L^2 )=\sup_Q \int_0^\delta \sqrt{1+\ln N
  (\varepsilon \|G\|_{2},\mathcal{G}, L^2(Q) ) } d\varepsilon
,\qquad 0<\delta\leq 1,
$$ denote the uniform entropy integral, where the supremum is taken
  over all finitely discrete probability distributions $Q$ on the
  space of the observations, and $\| G \|_{2}$ denotes the norm of
  $G$ in $L^2(Q)$. Let $Z_1,\cdots,Z_n$ be a sample of independent
  observations and let
\begin{equation*}
\mathbb{G}_n g=\frac{1}{\sqrt{n}}\sum_{i=1}^n \gamma(Z_i) , \qquad \gamma \in\mathcal{G}
\end{equation*}
be the empirical process indexed by $\mathcal{G}$. If the covering
number $N (\varepsilon ,\mathcal{G}, L^2(Q) ) $ is of polynomial order
in $1/\varepsilon,$ there exists a constant $c>0$ such that
$J(\delta,\mathcal{G}, L^2 )\leq c \delta \sqrt{\ln(1/\delta)}$ for
$0<\delta<1/2.$ Now if $\mathbb{E}\gamma^2 < \delta^2 \mathbb{E}G^2$
for every $\gamma$ and some $0<\delta <1$, and
$\mathbb{E}G^{(4\upsilon-2)/(\upsilon-1)}<\infty$ for some
$\upsilon>1$, under mild additional measurability conditions, Theorem
3.1 of \cite{Vaart2011} implies
\begin{equation}\label{vwww0}
 \sup_{\mathcal{G}}|\mathbb{G}_n \gamma| = J(\delta,\mathcal{G}, L^2
 )\left( 1 + \frac{ J(\delta^{1/\upsilon},\mathcal{G}, L^2 )}{\delta^2
   \sqrt{n} }
 \frac{\|G\|_{(4\upsilon-2)/(\upsilon-1)}^{2-1/\upsilon}}{\|G\|_{2}^{2-1/\upsilon}}
 \right)^{\upsilon/(2\upsilon-1)} \|G\|_2 O_p(1),
\end{equation}
where $\|G\|_{2}^2 = \mathbb{E}G^2$ and the $ O_p(1)$ term is
independent of $n.$ Note that the family $\mathcal{G}$ could change
with $n$, as soon as the envelope is the same for all $n$.  {We apply
  this result to the family of functions $\mathcal{G} = \{ Y L ((W -
  w)/g) : w\in\mathbb{R}^p\}$ for a sequence $g$ that converges to
  zero and the envelope $G(Y,W)=Y\sup_{w\in\mathbb{R}^p} L(w).$ Its
  entropy number is of polynomial order in $1/\varepsilon$,
  independently of $n$, as $L(\cdot)$ is of bounded variation, see for
  instance \cite{Vaart1996}.  Now for any $\gamma \in \mathcal{G}$, $
  \mathbb{E} \gamma ^2(Y,W) \leq C g^p \mathbb{E} G^2(Y,W), $ for some
  constant $C$.  Let $\delta = g^{3p/7},$ so that $ \mathbb{E} \gamma
  ^2(Y,W) \leq C^\prime \delta^2 \mathbb{E} G^2(Y,W), $ for some
  constant $C ^\prime$ and $\upsilon =3/2$, which corresponds to
  $\mathbb{E}G^{8}<\infty$ that is guaranteed by our assumptions.  The
  bound in (\ref{vwww0}) thus yields
$$
 \sup_{\mathcal{G}}\left|\frac{1}{g^p \sqrt{n}} \; \mathbb{G}_n \gamma\right| =  \frac{ \ln^{1/2}(n)}{g^{4p/7} \sqrt{n}}
 \left[ 1 + n^{-1/2}g ^{-4p/7}\ln^{1/2}(n) \right]^{3/4} O_p(1) ,
$$
where the $ O_p(1)$ term is independent of $n$.
Since $n^{7/8} g^p/\ln n  \rightarrow
\infty,$ the expected result follows.
}
}
\end{proof}


\begin{proof}[Proof of Lemma \ref{DeltaUiStar}]
We have
\begin{eqnarray*}
\hat{u}_{i}^{*}\hat{f}_{i} & = & \dfrac{1}{n-1}\sum_{k\neq
  i}\left(Y_{i}^{*}-Y_{k}^{*}\right)L_{nik}\\ & = &
u_{i}^{*}\hat{f}_{i}-\dfrac{1}{n-1}\sum_{k\neq
  i}u_{k}^{*}L_{nik}+\dfrac{1}{n-1}\sum_{k\neq
  i}\left(\hat{r}_{i}-\hat{r}_{k}\right)L_{nik}
\end{eqnarray*}
where
\begin{eqnarray*}
\dfrac{1}{n-1}\sum_{k\neq i}\left(\hat{r}_{i}-\hat{r}_{k}\right)L_{nik} & = & \dfrac{1}{n-1}\sum_{k\neq i}\left(r_{i}-r_{k}\right)L_{nik}
 +\left(\hat{r}_{i}-r_{i}\right)\hat{f}_{i}\\
 &  & -\dfrac{1}{\left(n-1\right)^{2}\hat{f}_{k}}\sum_{k\neq i}\sum_{k^{\prime}\neq k}\left(r_{k^{\prime}}-r_{k}\right)L_{nkk^{\prime}}L_{nik}\\
 &  & -\dfrac{1}{\left(n-1\right)^{2}\hat{f}_{k}}\sum_{k\neq i}\sum_{k^{\prime}\neq k}u_{k^{\prime}}L_{nkk^{\prime}}L_{nik}.
\end{eqnarray*}
By Lemma \ref{unif_omeg} and the fact that $f(\cdot)$ is bounded away from zero, deduce that $\sup_i |\hat{r}_{i}-r_{i}|  = o_{p}\left(1\right).$ From this and applying several times the arguments in the proof of Lemma \ref{unif_omeg} we obtain
$$
\dfrac{1}{n-1}\sum_{k\neq i}\left(\hat{r}_{i}-\hat{r}_{k}\right)L_{nik}  =  o_{p}\left(1\right).
$$
On the other hand,
\begin{eqnarray*}
\left|\dfrac{1}{n-1}\sum_{k\neq i}u_{k}^{*}L_{nik}\right| & \leq  &\left|\dfrac{1}{n-1}\sum_{k\neq i}\eta_{k} u_k L_{nik}\right| +\dfrac{\sup_j |\hat{r}_{j}-r_{j}|}{n-1}\sum_{k\neq i}|\eta_k| \bf{L}_{nik}\\
&=& o_{p}\left(1\right),
\end{eqnarray*}
where we used again the arguments for $\Delta_{1i}$ in the proof of Lemma \ref{unif_omeg} (here with $\eta_{k} u_k$ and $|\eta_k|$ in the place of $Y_k$) to derive the last rate.
\end{proof}

\newpage

\begin{figure}
\begin{centering}
\begin{tabular}{c}
 \includegraphics[scale=0.6,angle=270]{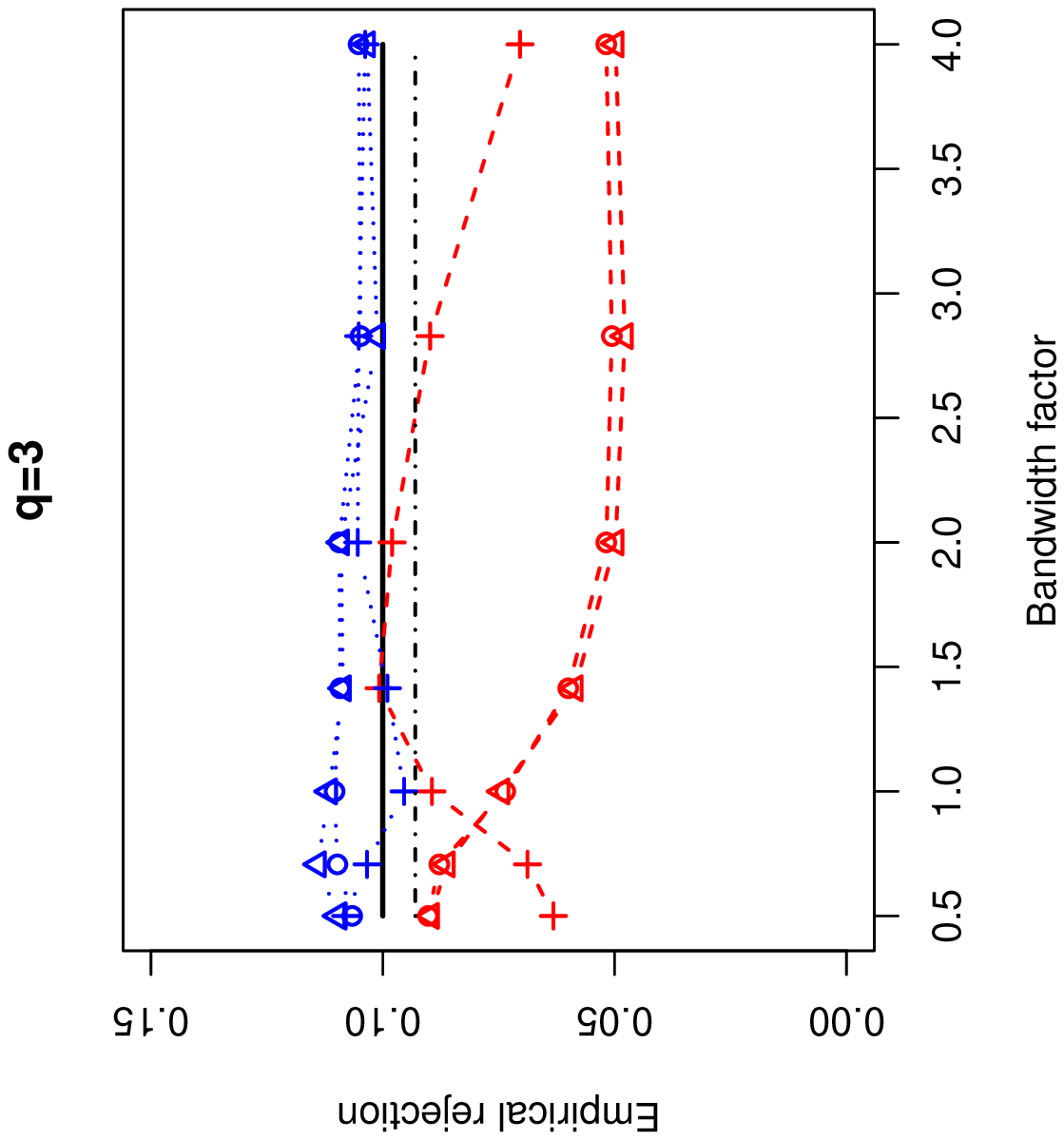} ~\!\!
 \includegraphics[scale=0.6,angle=270]{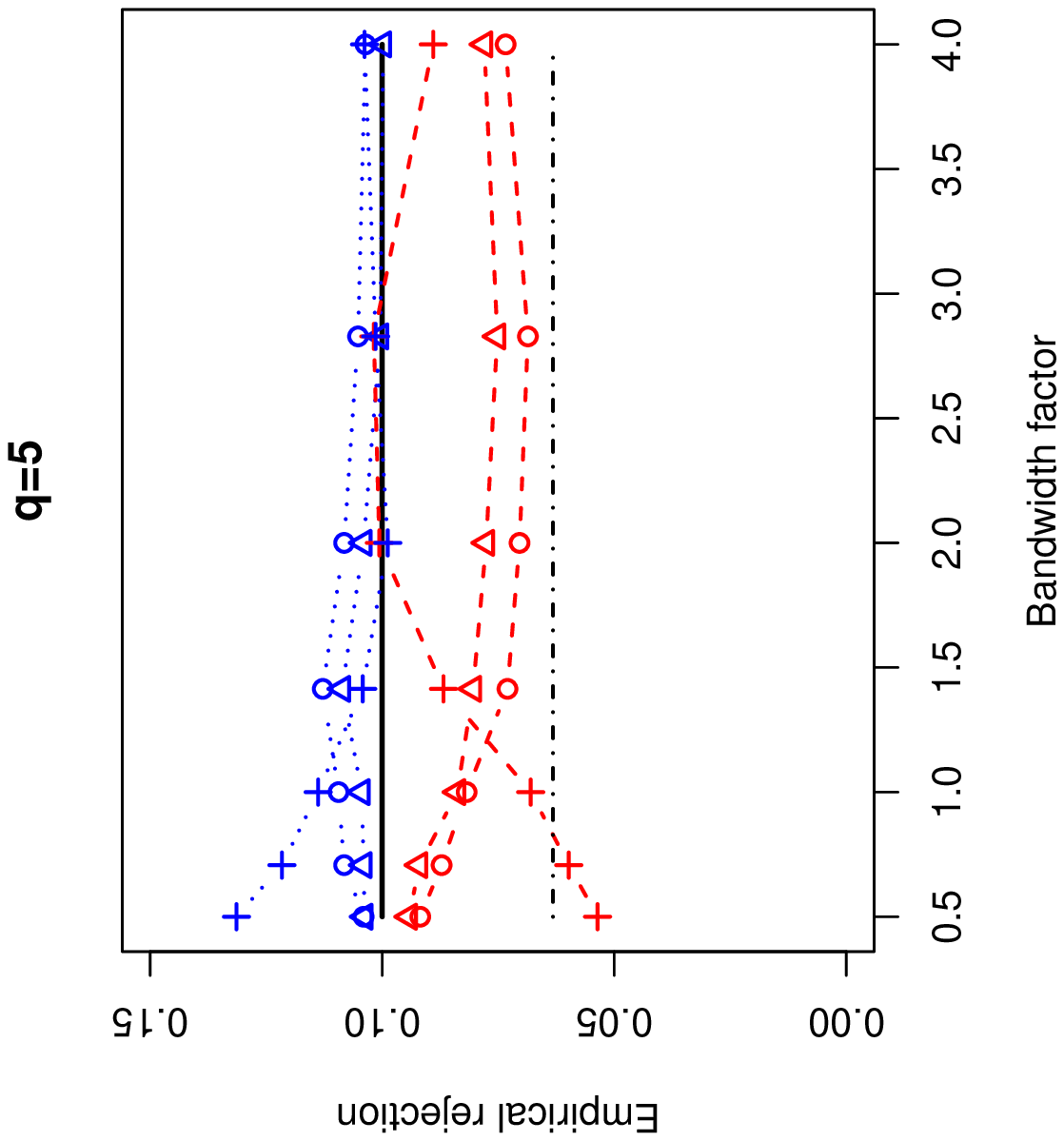}\tabularnewline
 \includegraphics[scale=0.6,angle=270]{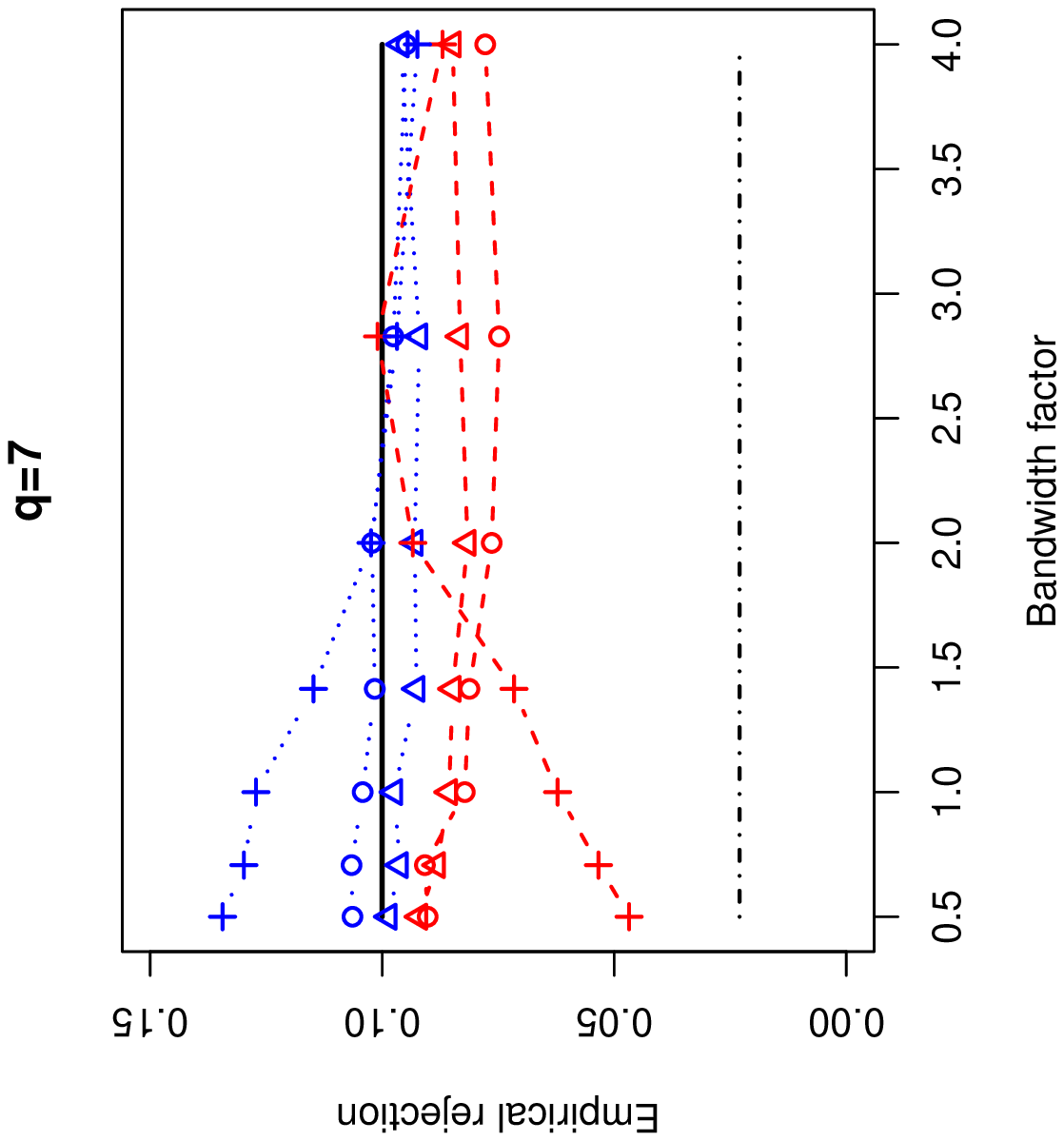} ~\!\!
 \includegraphics[scale=0.6,angle=270]{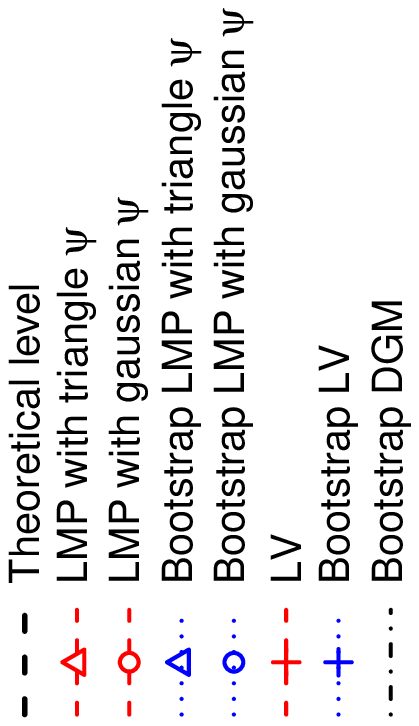}\tabularnewline
\end{tabular}
\par\end{centering}
\caption{Empirical rejections under $H_0$ as a function of the bandwidth, $n=100$  \label{fig:LevelCont}}
\end{figure}

\begin{figure}
\begin{centering}
\begin{tabular}{c}
 \includegraphics[scale=0.6,angle=270] {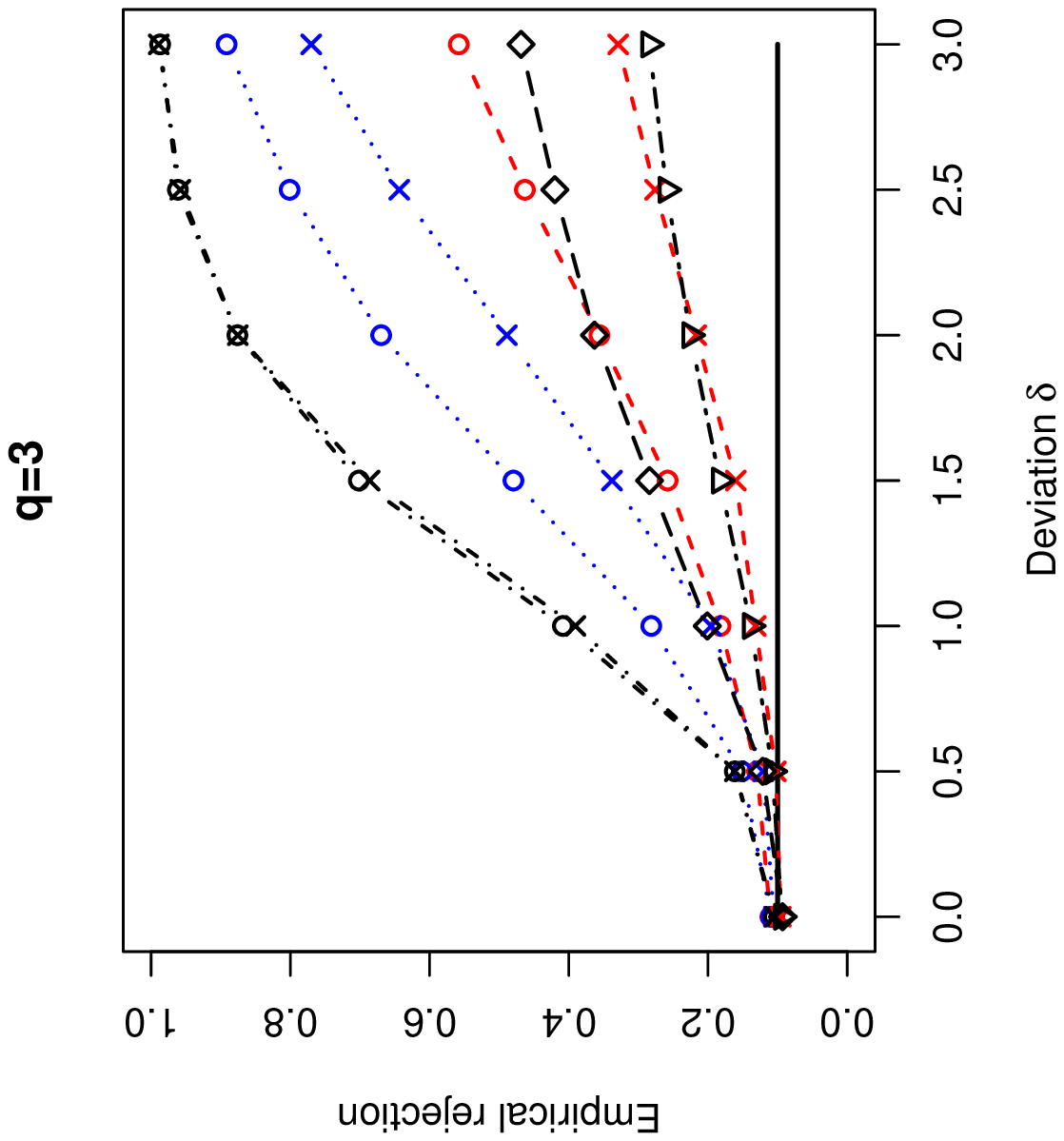}
 ~\!\!\includegraphics[scale=0.6,angle=270]{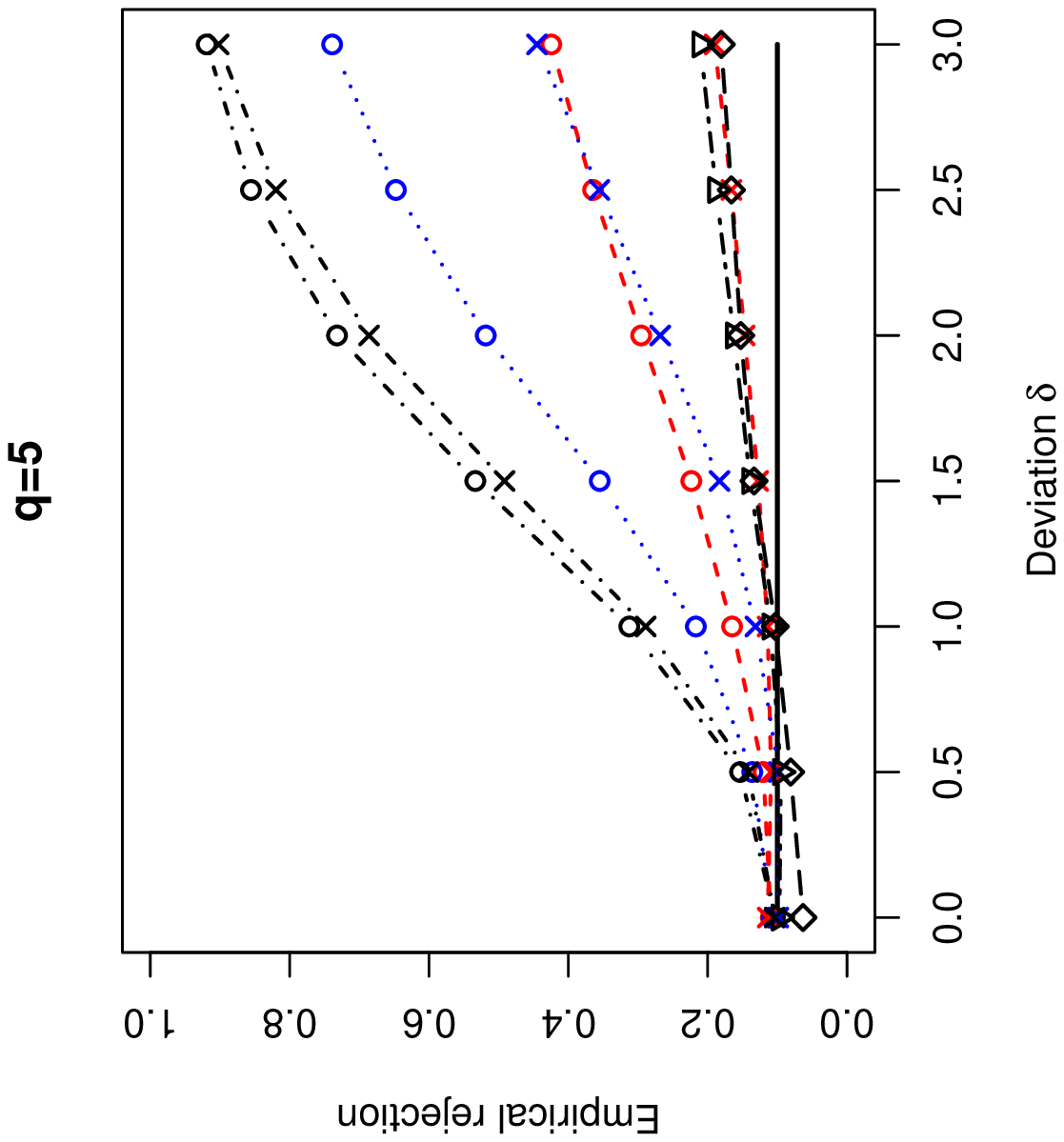}
 \tabularnewline
 ~\!\!\includegraphics[scale=0.6,angle=270]{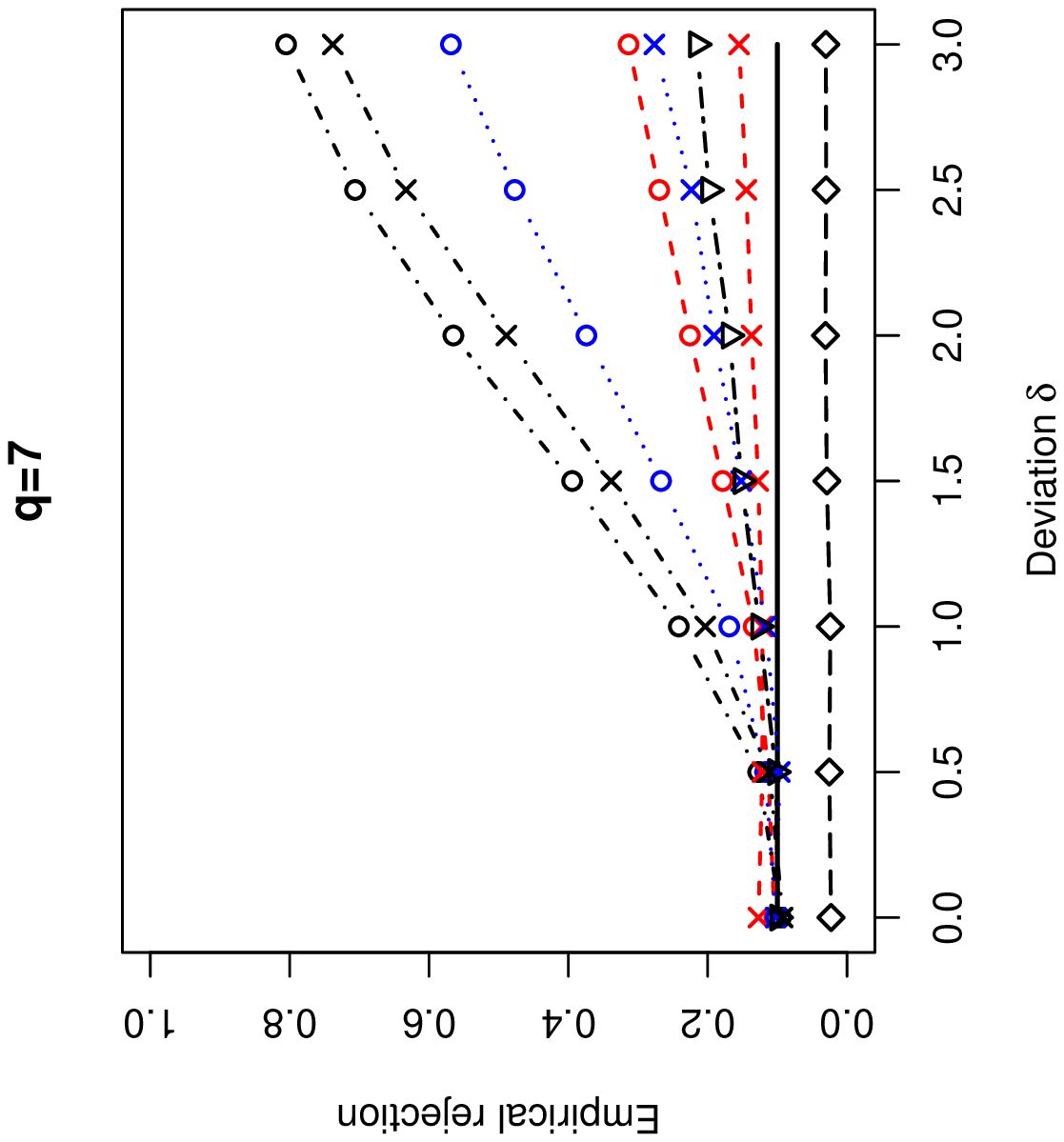}
 ~\!\!\includegraphics[scale=0.6,angle=270]{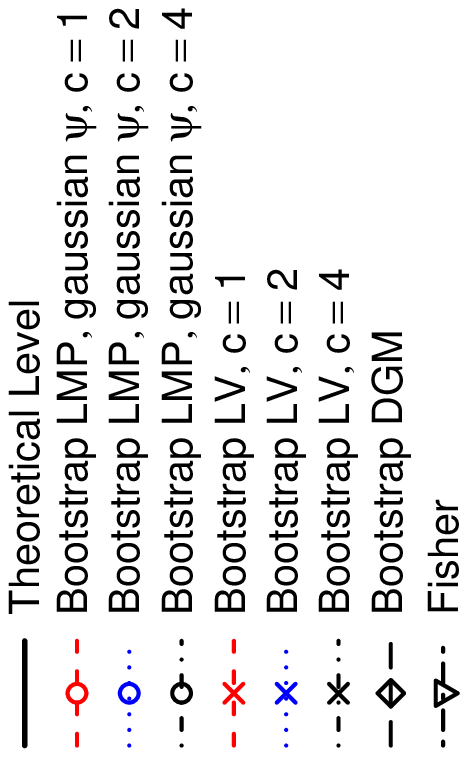}
 \tabularnewline
\end{tabular}
\par\end{centering}
\caption{Empirical power curves for a quadratic alternative, $n=100$ \label{fig:PowerContQs}}
\end{figure}

\begin{figure}
\begin{centering}
\begin{tabular}{c}
 \includegraphics[scale=0.6,angle=270] {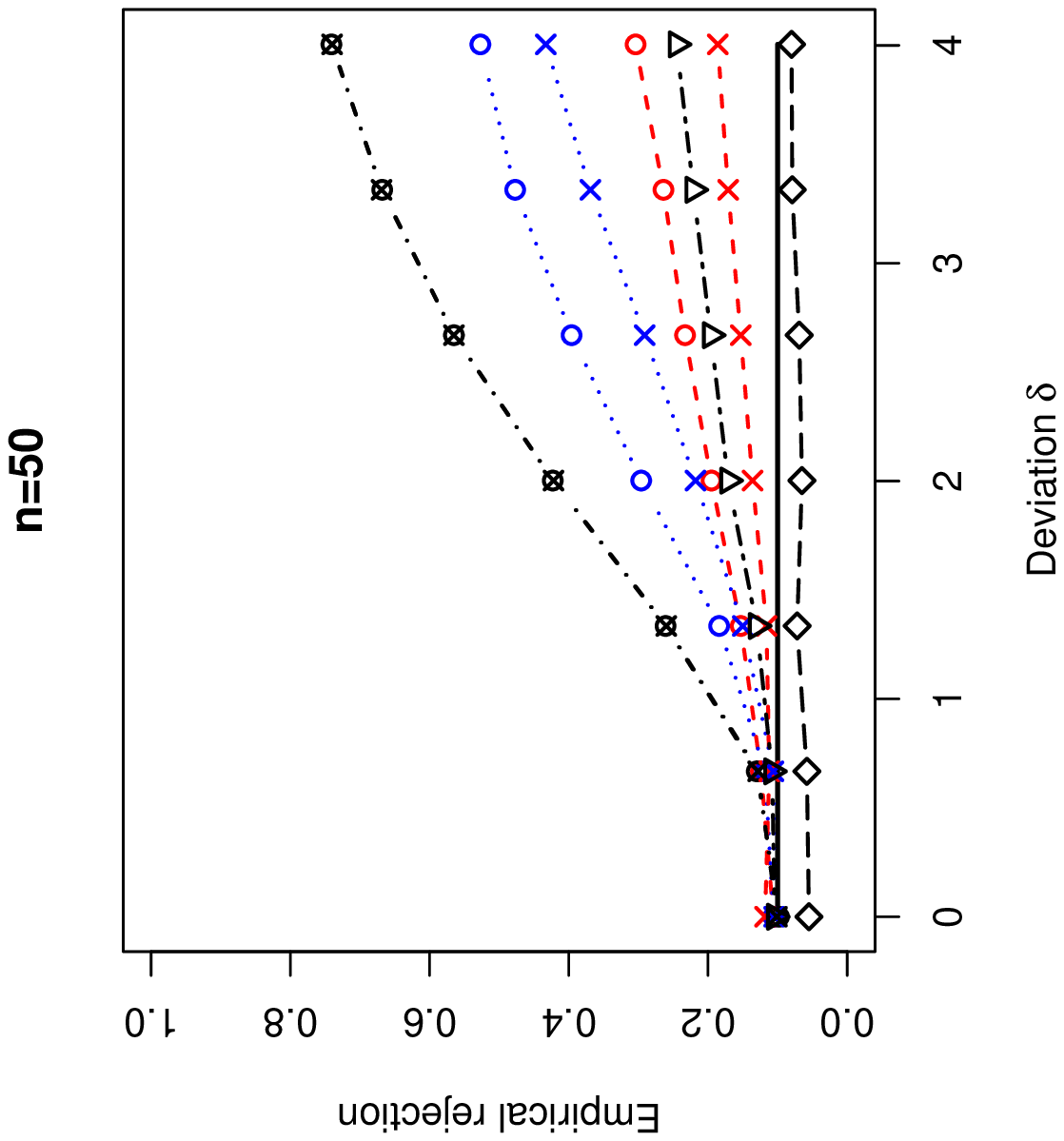}
 ~\!\!\includegraphics[scale=0.6,angle=270]{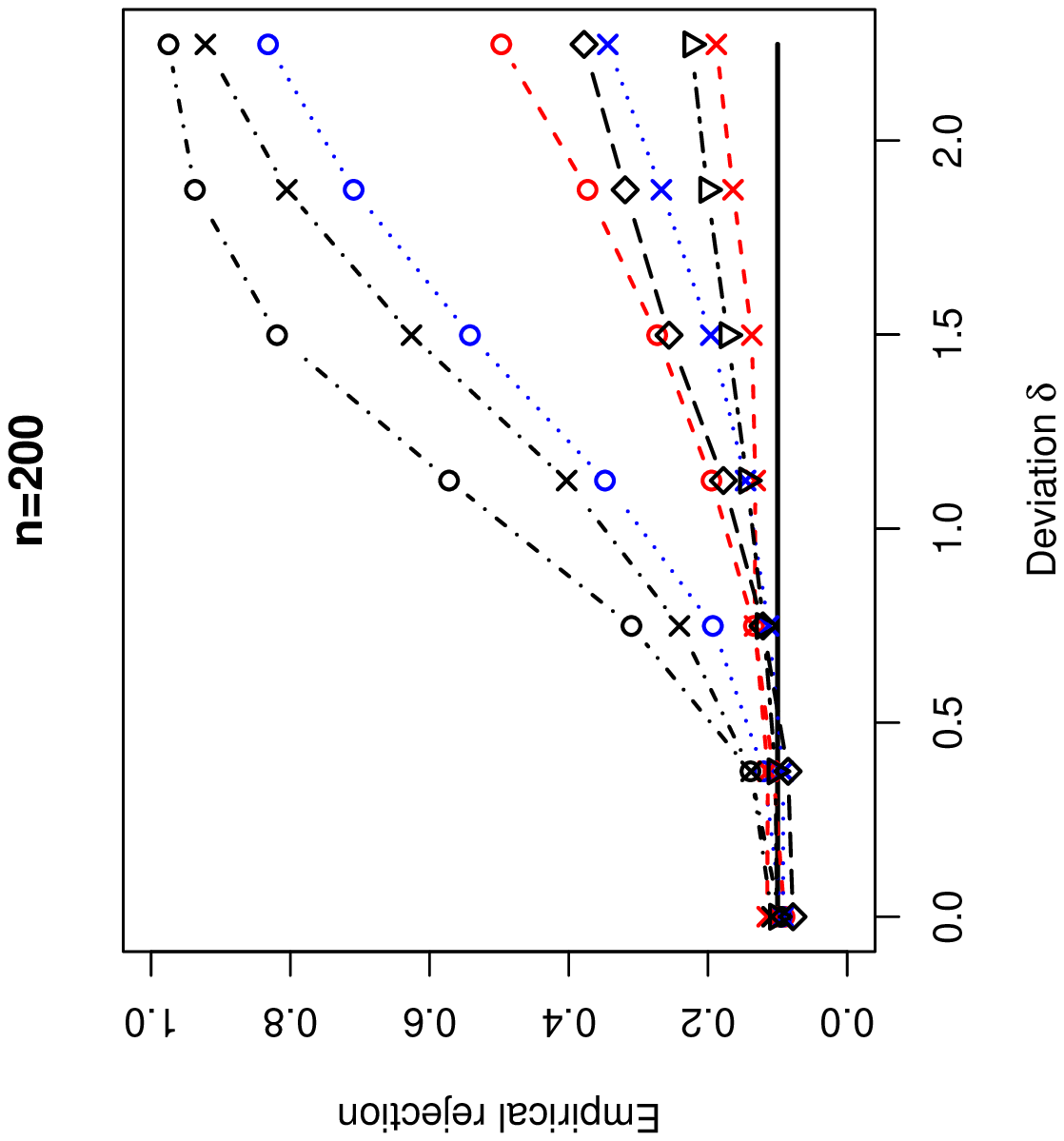}
 \tabularnewline
\end{tabular}
\par\end{centering}

\caption{Empirical power curves for a quadratic alternative, $q=5$ \label{fig:PowerContNs}}
\end{figure}

\begin{figure}
\begin{centering}
\begin{tabular}{c}
 \includegraphics[scale=0.6,angle=270]{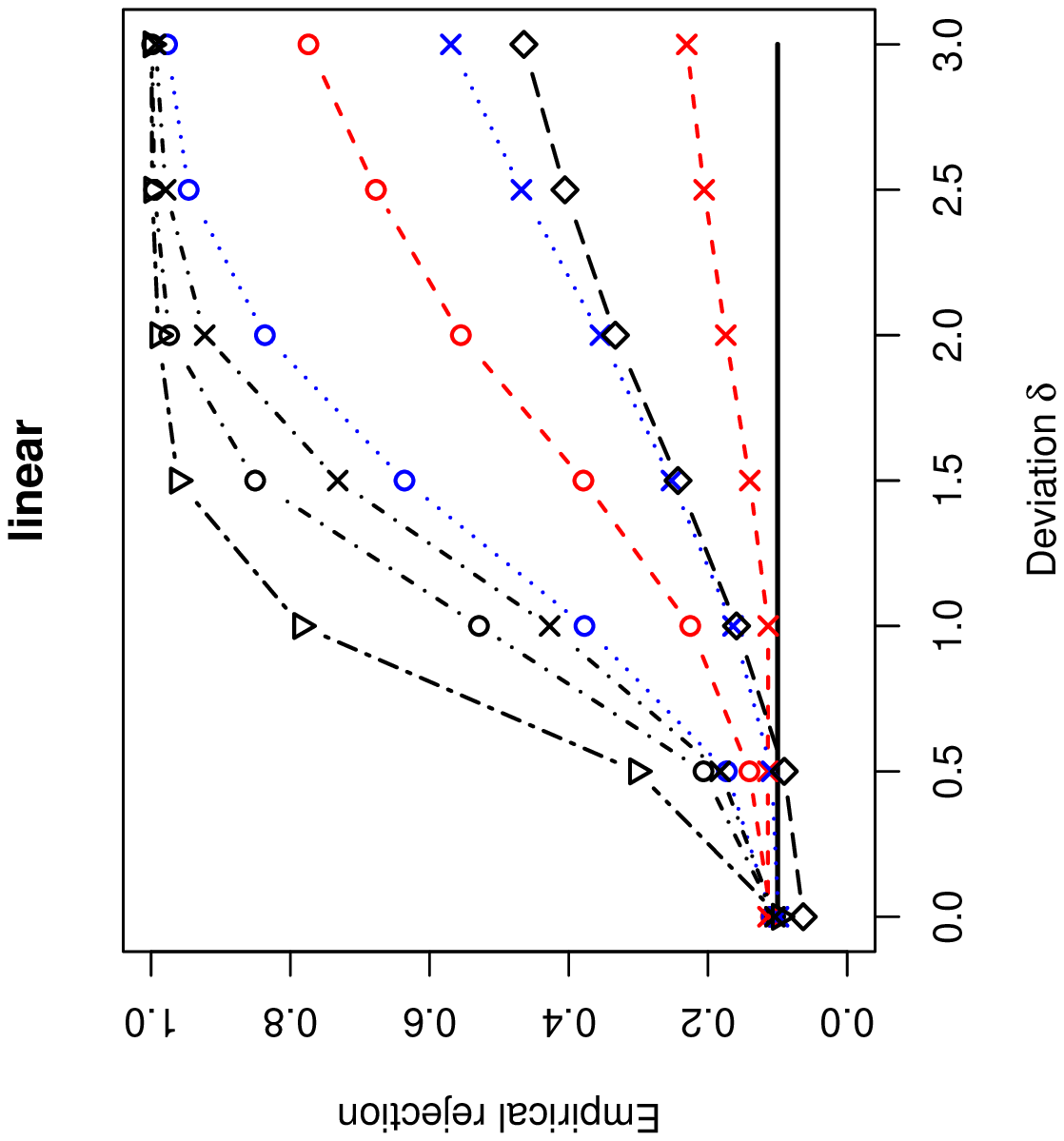} ~ \!\! \includegraphics[scale=0.6,angle=270]{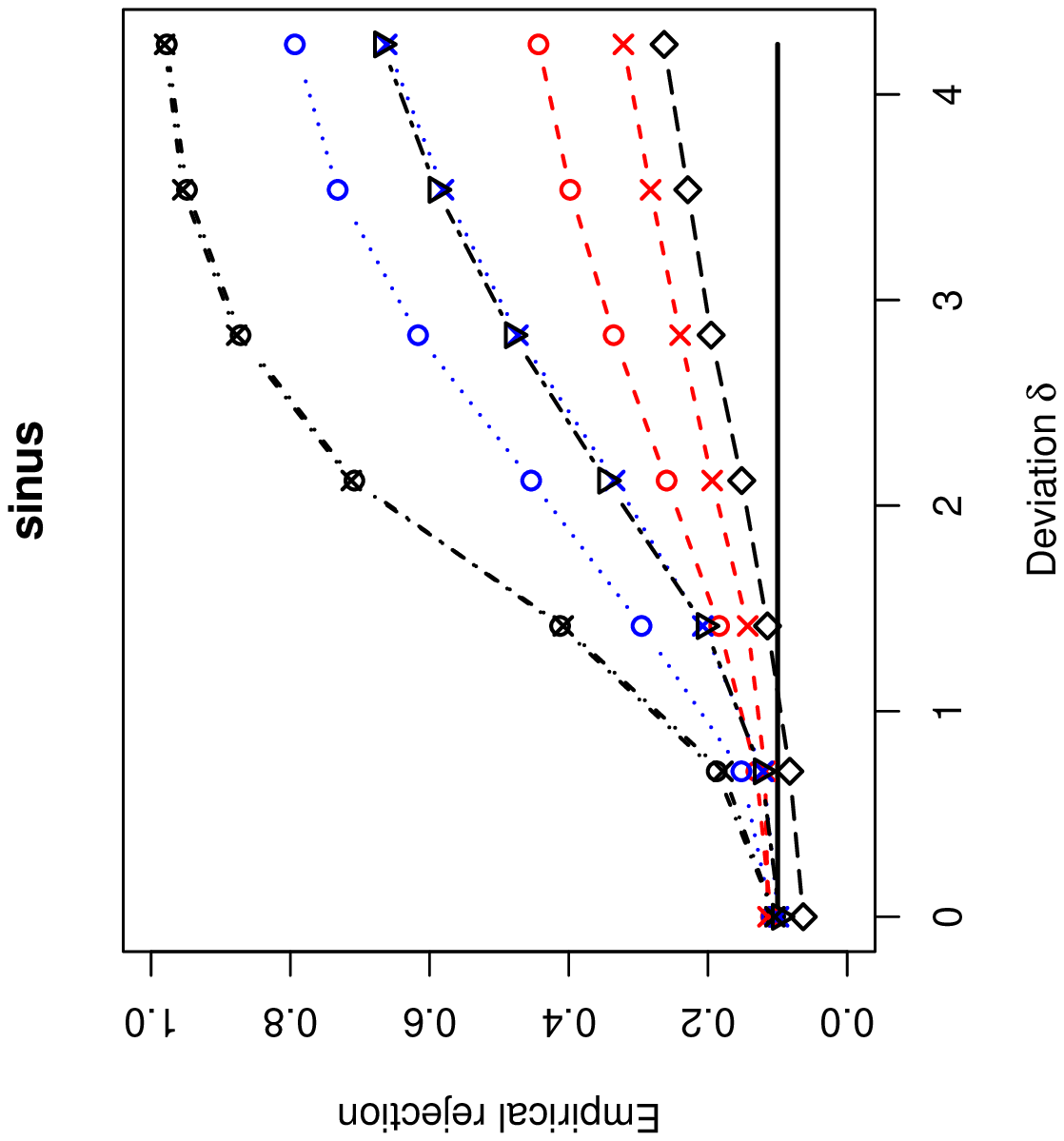}
\end{tabular}
\par\end{centering}
\caption{Empirical power curves for linear and sine alternative, $n=100$ and $q=5$}
\label{fig:PowerContAlter}
\end{figure}

\begin{figure}
\begin{centering}
\begin{tabular}{c}
 \includegraphics[scale=0.6,angle=270]{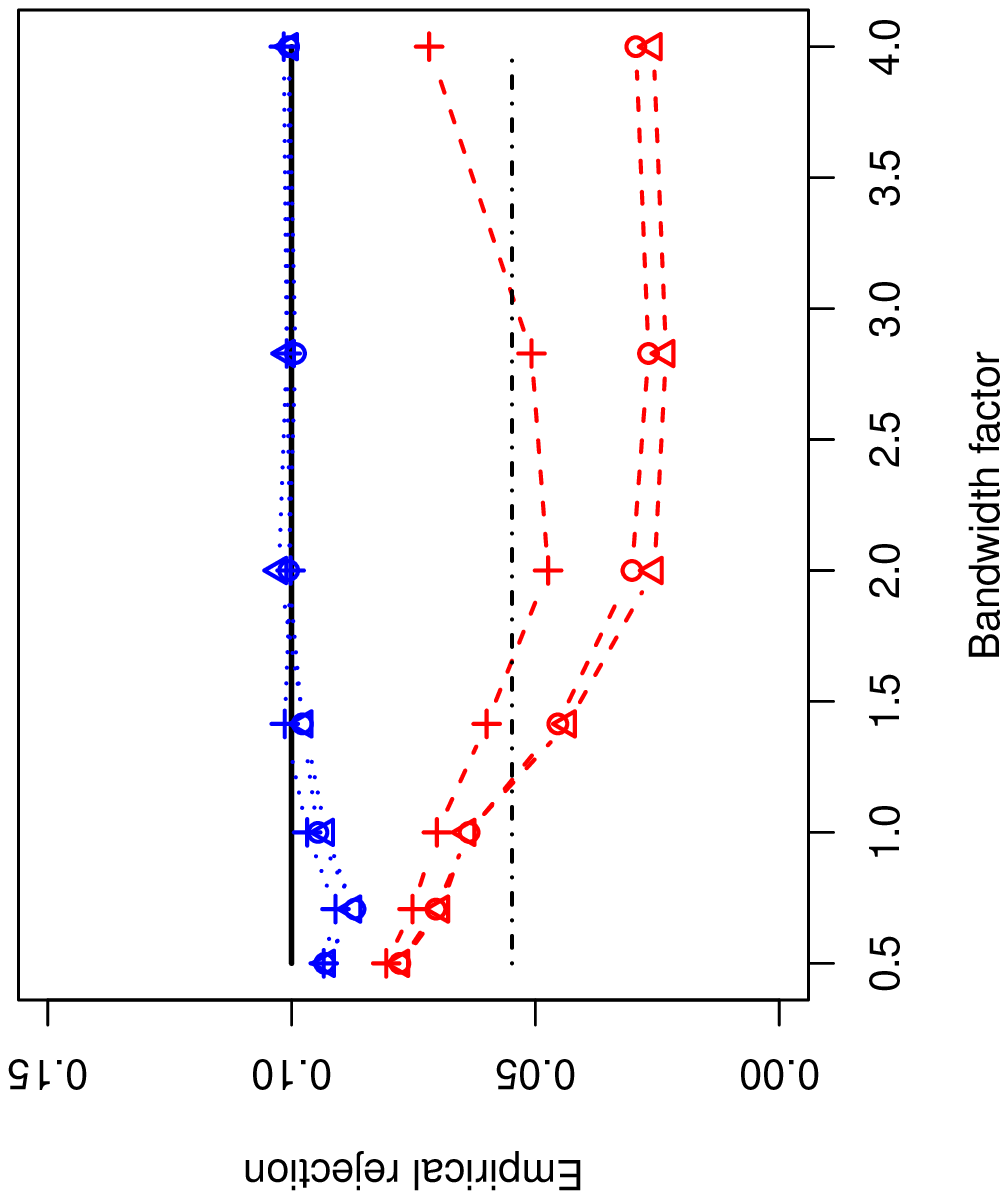} ~\!\! \includegraphics[scale=0.6,angle=270]{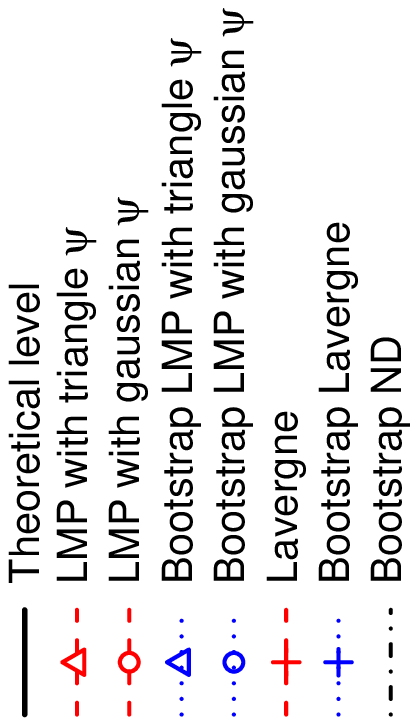}\tabularnewline
\end{tabular}
\par\end{centering}

\caption{Empirical rejection under $H_{0}$ as a function of the
  bandwidth, $X$ Bernoulli and $n=100$ \label{fig:LevelDisc}}
\end{figure}

\begin{figure}
\begin{centering}
\begin{tabular}{c}
 \includegraphics[scale=0.6,angle=270] {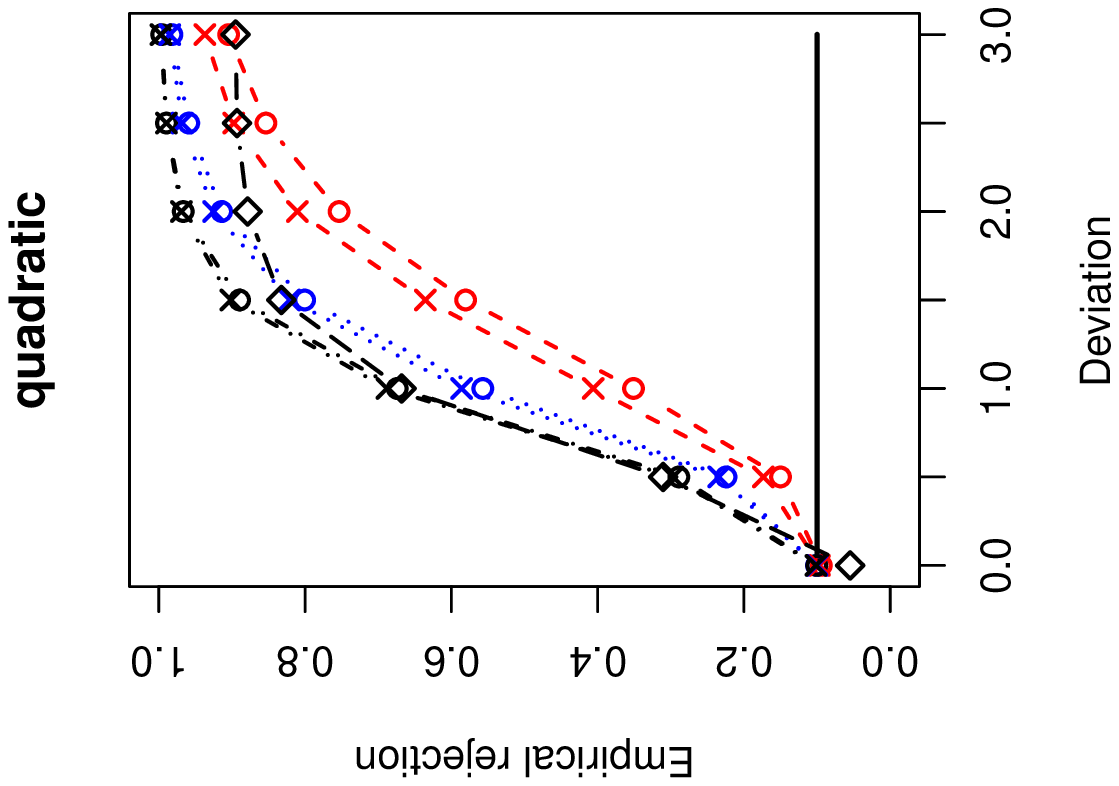}
 ~\!\!\includegraphics[scale=0.6,angle=270]{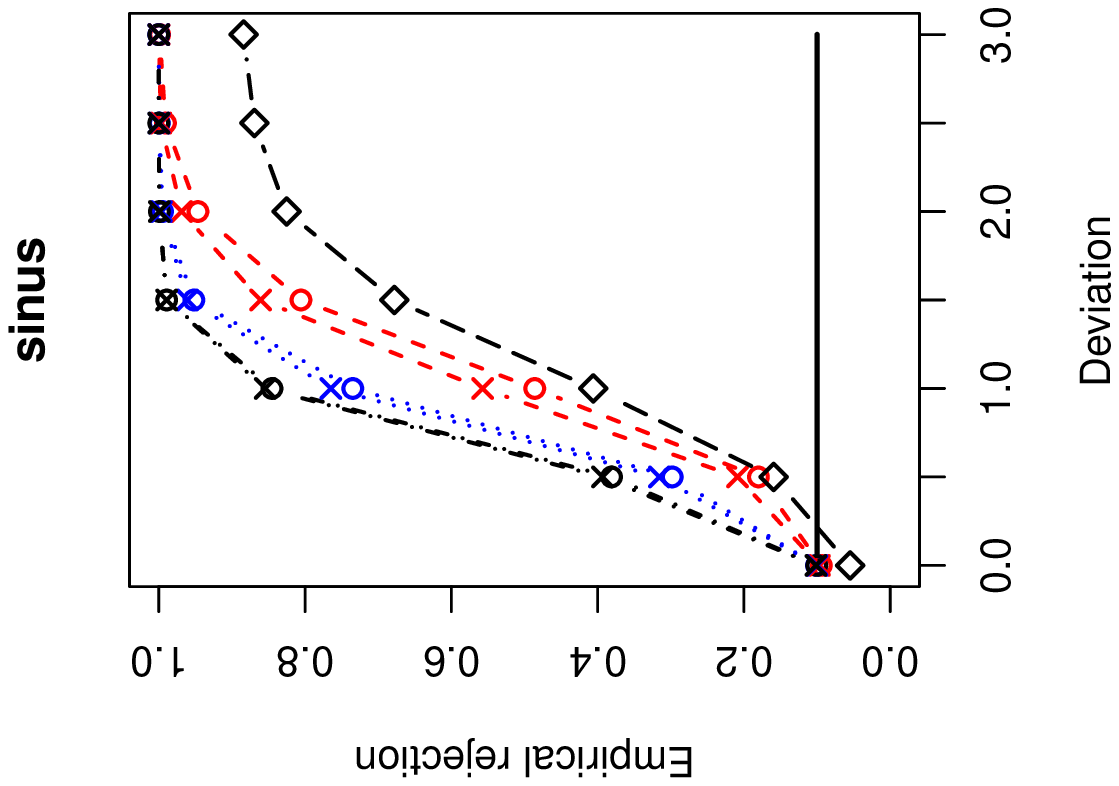}
 ~\!\!\includegraphics[scale=0.6,angle=270]{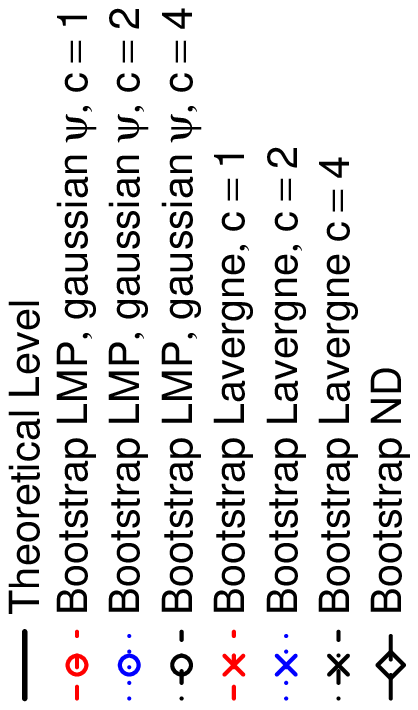}
 \tabularnewline
\end{tabular}
\par\end{centering}
\caption{Empirical power curves, $X$ Bernoulli  and $n=100$ \label{fig:PowerDisc}}
\end{figure}

\end{document}